\newcommand\sly{\spaceskip .33333em plus 1em\sloppy}
\theoremstyle{plain}
\newtheorem{thm}{Theorem}[section]
\newtheorem{cor}[thm]{Corollary}
\newtheorem{lem}[thm]{Lemma}
\newtheorem{prop}[thm]{Proposition}
\newtheorem{problem}[thm]{Problem}
\numberwithin{equation}{section}
\theoremstyle{definition}
\newtheorem{rem}[thm]{Remark}
\newcommand\blank{\mathord{\hbox to 1.5ex{\hrulefill}}\,}
\let\opn\operatorname
\DeclareMathOperator\GG{G}
\DeclareMathOperator\Sp{Sp}
\DeclareMathOperator\St{St}
\DeclareMathOperator\E{E}
\DeclareMathOperator\Ker{Ker}
\DeclareMathOperator{\chr}{char}
\newcommand\Z{\mathbb Z}
\newcommand\LL{\mathcal L}
\newcommand\AAA{\mathfrak{A}}
\let\mf\mathfrak
\let\ph\varphi
\let\eps\varepsilon
\def\le{\leqslant}
\def\ge{\geqslant}
\let\ovl\overline
\let\wt\widetilde
\let\wh\widehat
\let\sm\smallsetminus
\def\No{\textnumero\,}
\def\llq{``}
\def\rrq{''}
\begin{document}

\title
[Subgroups of Chevalley groups of types $B_l$ and $C_l$]
{Subgroups of Chevalley groups of types $B_l$ and $C_l$, containing the group over a subring, and corresponding carpets}

\author
{Ya.~N.~Nuzhin}

\address{Institute of Mathematics\\ and Fundamental Informatics\\
Siberian Federal University\\
Svobodny prospect 79, Krasnoyarsk, 660041}

\email{nuzhin2008@rambler.ru}

\author
{A.~V.~Stepanov}

\address{St. Petersburg State University\\
Universitetskaya nab. 7-9,
St. Petersburg, 199034}

\email{stepanov239@gmail.com}

\keywords{classical groups, subgroup lattice, carpet subgroups, Bruhat decomposition}

\begin{abstract}
We continue the study of subgroups of the Chevalley group $G_P(\Phi,R)$ over a ring $R$ with root system $\Phi$ and
weight lattice $P$, containing the elementary subgroup $E_P(\Phi,K)$ over a subring $K$ of $R$.
A.~Bak and A.~V.~Stepanov considered recently the case of symplectic group (simply connected group with root system $\Phi=C_l$) 
in characteristic 2. In the current article we extend their result for the case $\Phi=B_l$ 
and for the groups with other weight lattices. As well as in the Ya.~N.~Nuzhin's work on the case where
$R$ is an algebraic extension of a nonperfect field $K$ and $\Phi$ is not simply laced
the description uses carpet subgroups parametrized by two additive subgroups.
In the second part of the article we establish the Bruhat decomposition for these carpet subgroups
and prove that they have a split saturated Tits system. As a corollary we obtain that they are
simple as abstract groups.
\end{abstract}

\thanks
{The work of the first author (\S\S5--9) is supported by RFBR (project \No16--01--00707).
The research of the second author (\S\S2--4) is supported bu the Russian Science Foundation (project \No17-11-01261).}

\maketitle


\section*{Introduction}
Let $\GG_P(\Phi,\,\blank\,)$ be a Chevalley--Demazure group scheme,
$\E_P(\Phi,\,\blank\,)$ its elementary subgroup, and let
$K\subseteq R$ be a pair of rings (by default all rings are commutative with a unit element 1 and all ring homomorphisms preserve 1)
In the current article we study the lattice $\LL$ of subgroups of $\GG_P(\Phi,R)$, containing $\E_P(\Phi,K)$, 
assuming that the root system~$\Phi$ is not simply laced, i.\,e. $\Phi=B_l$, $C_l$, $F_4$ or $G_2$.

For root simply laced root systems reasonable description can be obtained only if $R$ is quasi algebraic
over $K$, see.~\cite{StepGL,StepNonstandard}. The standard description for an algebraic field extension
was obtained in~\cite{Nuzhin}. If $R$ is the fraction field of a principal ideal domain $K$,
the same result was proved in~\cite{NuzhYak} (in this case $R$ is quasi algebraic over $K$ as well).
For not simply laced root systems the situation is quite different. In particular,
for doubly laced root systems ($\Phi=B_l$, $C_l$ or $F_4$) the standard description was obtained
in~\cite{StepStandard} for an arbitrary pair of rings provided that 2 is invertible in $K$.
To specify the meaning of the standard description of the lattice $\LL$ 
consider the lattice $L(D,G)$ consisting of subgroups of an abstract group $G$, containing 
a given subgroup $D$.
A subgroup 
$$
F\in L(D,G)
$$
is called \emph{$D$-full}, if the normal closure $D^F$ coincides with $F$.
A \emph{sandwich} of the lattice $L(D,G)$ is the set of all subgroups containing a given $D$-full subgroup $F$ 
and lying in its normalizer $N_G(F)$. We say that the lattice $L(D,G)$ satisfies the
\emph{sandwich classification theorem} if it splits into the union of sandwiches
(with our definition the union is disjoint as the only $D$-full subgroup in a sandwich equals
$D^H$ for all $H$ in a this sandwich).
Now, by the standard description of the intermediate subgroup lattice we mean the sandwich classification
theorem together with a description of all $D$-full subgroups.

In the cases of the standard description of the lattice $\LL$ mentioned above the sandwiches were parametrized
by subrings $S$ of $R$, containing  $K$, and $\E_P(\Phi,K)$-full subgroups were the elementary subgroups
$\E_P(\Phi,S)$. Observe that if $S$ and $R$ are fields, then the normalizer of
$\E_P(\Phi,S)$ in $\GG_P(\Phi,R)$ in equal to the product of $\GG_P(\Phi,S)$ by the center of
$\GG_P(\Phi,R)$. If the structure constants from the Chevalley commutator formula are not invertible,
then there exist other $\E_P(\Phi,K)$-full subgroups.

Denote by $p=p(\Phi)$ the maximal multiplicity of an edge in the Dynkin diagram or, what amounts to be the same, 
the maximal structure constant in the Chevalley commutator formula.
In other words, $p=2$ if $\Phi=B_l$, $C_l$ ($l\ge 2$), $F_4$, and $p=3$ if $\Phi=G_2$.
Let $\Lambda=(\Lambda_l,\Lambda_s)$ be a pair of additive subgroups of $R$, satisfying the following conditions:
\begin{enumerate}
\renewcommand\labelenumi{AP\arabic{enumi}.}
\item
$p\Lambda_s\subseteq \Lambda_l\subseteq \Lambda_s$;
\item
$t^p\Lambda_l\subseteq \Lambda_l$ для любого $t\in \Lambda_s$;
\item
$\Lambda_s$ is a subring if $\Phi\ne B_l$;
\item
$\Lambda_l$ is a subring if $\Phi\ne C_l$
\end{enumerate}
(if $\Phi=B_2=C_2$, then neither $\Lambda_s$, nor $\Lambda_l$ must be subrings).
If $\Phi=C_l$, $n\ge3$ such a pair is a particular case of a form ring in the sense of A.~Bak~\cite{BakBook}. 
In general we call such a pair an \emph{admissible pair of type $\Phi$}.
The notion of admissible pair is defined in the works E.~Abe and K.~Suzuki~\cite{AbeSuzuki,AbeNormal} in in a similar way.

Define a family of subgroups $\AAA=\{\AAA_\alpha \mid \alpha \in\Phi\}$ by
$$
\AAA_\alpha =
\begin{cases}
\Lambda_s,& \text{ if }\alpha  \text{ is a short root}, \\
\Lambda_l,& \text{ if }\alpha  \text{ is a long root.}
\end{cases}
$$
It turns out that this family is an elementary carpet of type~$\Phi$ in the sense of V.~M.~Levchuk~\cite{LevchukABA}. 
For the carpet $\AAA$, corresponding to an admissible pair $\Lambda$, its elementary carpet subgroup
$$
\E(\Phi,\Lambda)=\E(\Phi,\AAA)=\langle x_\alpha (\AAA_\alpha )\mid \alpha \in\Phi\rangle
$$
belongs to the lattice 
$\LL$, if $K\subseteq\Lambda_l$ and almost always is $\E(\Phi,K)$-full.
Thus, if the structure constants are not invertible, then there appear new sandwiches,
corresponding to admissible pairs instead of rings.

The sandwich classification theorem for the lattice $\LL$ with noninvertible structure constants
are established in the works~\cite{Nuzhin13} and~\cite{BakStepSymp}. The former considers the case of an algebraic field 
extension, whereas the latter deals with the group $\Sp_{2l}$ over arbitrary rings $K\subseteq R$
with $2=0$. It is well-known that in characteristic~2 Chevalley groups of types $B_l$ and $C_l$ are almost the same.
In particular, over perfect fields they are isomorphic. In the current article we extend the results of~\cite{BakStepSymp}
to all forms of Chevalley groups of types $B_l$ and $C_l$ and on the Steinberg group of type $C_l$. 
This is done by the homomorphisms $\GG(C_l,R)\leftrightarrows\GG(B_l,R)$ and group-theoretic arguments.
Conjecturally, for the groups of type $F_4$ the same result is also true, but the proof needs 
an extra knowledge about the normalizer of the elementary carpet subgroup. Therefore, it will be addressed in a
separate article.

Besides, for fields of bad characteristic we study elementary carpet subgroups corresponding to admissible
pairs. In particular, we prove the Bruhat decomposition for these subgroup and establish their simplicity.

The paper is organized as follows. In~\S\ref{sec:notation} we introduce the main notation to be used
in the article.
Section~\ref{sandwich} is devoted to the group-theoretic aspects of the sandwich classification theorem. 
In~\S\ref{sec:exceptional} we prove the existence of scheme morphisms
$$
\GG_{\mathrm{sc}}(C_l,\,\blank\,)\leftrightarrows\GG_{\mathrm{sc}}(B_l,\,\blank\,).
$$
Based on the results of two previous sections, in~\S\ref{sec:main} the sandwich classification theorem
is extended from simply connected group of type~$C_l$ to all\footnote{%
Since the center of the groups of type~$B_l$ and~$C_l$ in characteristic 2 is trivial, It may seem as they do not depend 
upon the weight lattice. However, the statement about the center holds only over reduced rings, and moreover,
the map from the simply connected group the adjoint one is not necessarily surjective even for reduced rings.}
all Chevalley groups of types~$B_l$ and~$C_l$ and to the Steinberg group of type~$C_l$.
In the remaining part of the article we study the carpet groups over fields. For the reader's convenience 
in~\S\ref{sec:carpets} we recall known statements about these groups to be used in the sequel.
In section~\ref{sec:Nuzhin} we state a corollary from the work~\cite{Nuzhin13} and pose some problems on
admissible pairs, which are negatively solved in~\S\ref{examples}.
Also in~\S\ref{examples} we discuss the question why there are no admissible pairs between
a principle ideal domain and its field of fractions.
In~\S\ref{sec:Bruhat} we establish the Bruhat decomposition in the carpet subgroup,
corresponding to an admissible pair.
The last section is to prove simplicity of this subgroup. This is done with help of the notion of 
$(B,N)$-pair.

\section{Main notation}\label{sec:notation}
Some definitions and notation were already formulated in the introduction, the others are given in
the current section.

Let $G$ be a group and $a,b,c\in G$. Denote by $a^b=b^{-1}ab$ the conjugate to
$a$ by $b$. The commutator $a^{-1}b^{-1}ab$ is denoted by $[a,b]$.
Let $F$ and $H$ be subsets of $G$. By $\langle F\rangle$ we denote the subgroup, generated by $F$. 
We denote by $F^H$ the subgroup of $G$, generated by the elements $a^b$ over all
$a\in F$ and $b\in H$. If
$H$ is a subgroup, $F^H$ is the smallest subgroup in $G$ containing $F$ and normalized by $H$.
The mutual commutator subgroup is a subgroup of~$G$ generated by all commutators
$[a,b]$, $a\in F$, $b\in H$. The normalizer of a subgroup $H$ in $G$ is denoted by~$N_G(H)$.

For a ring $R$ by $R^2$ we denote the set of all squares of elements of~$R$. In the current article
the set $R^2$ is considered for a ring $R$ of characteristic 2.
In this case $R^2$ is a subring.

Let $\Phi$ be a reduced irreducible root system and $\GG_P(\Phi,\,\blank\,)$ a Chevalley--Demazure group scheme
of type~$\Phi$ with the weight lattice~$P$. If the weight lattice is not important we write simply
$\GG(\Phi,\,\blank\,)$. The simply connected scheme (i.\,e. where $P=\opn{P}(\Phi)$\,) is denoted by
$\GG_{\mathrm{sc}}(\Phi,\,\blank\,)$.
Let $R$ be a ring. The elementary subgroup $\E(\Phi,R)$ of a Chevalley group $\GG(\Phi,R)$
is generated by the root subgroups
$$
X_\alpha(R)=\{x_\alpha (r)\mid r\in R\},
$$
over all $\alpha \in \Phi$. In case, where $R$ is a field or, more generally, a semilocal ring,
$\E_{\mathrm{sc}}(\Phi,R)$ coincides with the whole Chevalley group $\GG_{\mathrm{sc}}(\Phi,R)$.\footnote{%
In the book by R.~Steinberg~\cite{Steinberg} by a Chevalley group over a field the author means
its elementary subgroup. However, in non simply connected case this group is not algebraic.
For instance, the group $\opn{PSL}_n(F)$ is the elementary subgroup of the adjoint Chevalley group of type
$A_{n-1}$ over $F$, but cannot be defined by polynomial equations as soon as $F$ is infinite
and does not contain $n$th root of at least one element.}


For each $\alpha\in\Phi$ the scheme $X_\alpha$ is isomorphic to $\mathbb G_a$. The isomorphism $\mathbb G_a\to X_\alpha$
is denoted by $x_\alpha$. Thus for all $r,s\in R$ we have
\begin{equation}\label{add}
x_\alpha (r)x_\alpha (s)=x_\alpha(r+s).
\end{equation}
Furthermore, the Chevalley commutator formula
\begin{equation}\label{ChCommForm}
[x_\alpha(r),x_\beta(s)]=\prod_{i,j>0,\ i\alpha +j\beta\in\Phi} x_{i\alpha +j\beta}(C_{ij,\alpha \beta}r^is^j),\
\alpha\ne\pm\beta\in\Phi.
\end{equation}
holds in the elementary subgroup. Here $C_{ij,\alpha \beta}$ are nonzero integer constants not bigger than~$3$
(set $C_{ij,\alpha \beta}=0$ if $i\alpha +j\beta\notin\Phi$, this will allow to skip the condition $i\alpha +j\beta\in\Phi$).

The group with generators $y_\alpha(r)$, $\alpha\in\Phi$, $r\in R$ subject to relations~\ref{add} and~\ref{ChCommForm}
with all $x$ substituted by $y$ is called the Steinberg group of type~$\Phi$ over a ring~$R$. It is denoted by $\St(\Phi,R)$. 
The kernel of the natural epimorphism $\pi\colon \St(\Phi,R)\to\E(\Phi,R)$ that sends
$y_\alpha(r)$ to $x_\alpha(r)$ is denoted by $K_2(\Phi,R)$. 
Except certain root systems of small rank, the Steinberg group is centrally closed.
Conjecturally in all these cases $K_2(\Phi,R)$ is central in $\St(\Phi,R)$ and hence, 
is the Schur multiplier of the elementary subgroup.
Up to now the conjecture is proved for the root systems~$A_l$, $l\ge3$ (W.~van~der~Kallen~\cite{vdKCent}),
$C_l$, $l\ge3$ (A.~Lavrenov~\cite{LavSpCent}), $E_6,E_7,E_8$ (S.~Sinchuk~\cite{SinEl}) and
$D_l$, $l\ge4$ (A.~Lavrenov, S.~Sinchuk~\cite{LavSinDl}). 
We can not include the group $\St(B_l,R)$ in the theorem~\ref{main}
exactly because this conjecture is not proved for the group of type~$B_l$

Let us call a \emph{carpet of type $\Phi$ over $R$} a family of additive subgroups
$$
\AAA=\{\AAA_\alpha \mid \alpha \in \Phi\}
$$
of~$R$ satisfying the condition
\begin{equation}\label{carpet}
C_{ij,\alpha \beta}\AAA_\alpha ^i\AAA_\beta^j\subseteq\AAA_{i\alpha +j\beta},\ \text{при}\ \alpha\ne\pm\beta\in\Phi,\ i>0,\ j>0,
\end{equation}
where
$
\AAA_\alpha ^i=\{a^i\mid a\in\AAA_\alpha \},
$
and the constants $C_{ij,\alpha \beta}$ are defined by the Chevalley commutator formula.

An elementary carpet $\AAA$ of type $\Phi$ over $R$ defines the elementary \emph{carpet} subgroup
$$
\E(\Phi,\AAA)=\langle x_\alpha (\AAA_\alpha )\mid \alpha \in\Phi\rangle
$$
of $\GG(\Phi,R)$. In the current article we deal only with elementary carpets and elementary carpet
subgroups. Therefore, we shall skip the word ``elementary'' in this context.
A carpet $\AAA$ of type $\Phi$ over a ring $R$ is called \emph{closed}, if its carpet subgroup does not contain
extra root elements, i.\,e. if
$$
\E(\Phi,\AAA)\cap x_\alpha (R)=x_\alpha (\AAA_\alpha )\text{ for all }\alpha \in \Phi.
$$

In the scheme $\GG(\Phi,\,\blank\,)$ we fix a split maximal torus $T$ and an ordering of the root system.
Denote by $N$ the scheme normalizer of the torus~$T$ and let $U$ be the unipotent radical of the Borel subgroup:
$$
U(R)=\langle x_\alpha (R)\mid \alpha \in\Phi^+\rangle.
$$
In an appropriate matrix representation we may assume that $T(R)$ is the set of diagonal matrices,
$N(R)$ is the set of monomial matrices, and $U(R)$ is the set of upper unitriangular matrices inside $\GG(\Phi,R)$.

For a ring $R$ with a connected spectrum the quotient group $N(R)/T(R)$ is isomorphic to the Weyl group $W=W(\Phi)$
of $\Phi$. Note that over any ring~$R$ there exists a preimage of a given element $w\in W$ in $N(R)$,
because it exists already in $N(\Z)$.

\section{Sandwich classification theorem}\label{sandwich}
In this section we develop group theoretic methods of proof of the sandwich classification theorem
First, recall two facts obtained in the works~\cite{StepPolynormal,StepNormal,StepStandard}, see also~\cite{StepGrTh}. 
In the sequel the group is called perfect, if it coincides with its commutator subgroup.

Let $D$ be a perfect subgroup of an abstract group~$G$. 
Then for a subgroup $H\in L(D,G)$ the normal closure $D^H$ coincides with the mutual commutator subgroup $[H,D]$.
It is also clear that the sandwich classification theorem for the lattice $L(D,G)$ is equivalent to
the equalities $[H,D,D]=[H,D]$ for all subgroup $H$ from this lattice.

\begin{lem}[{\cite[Lemma~1]{StepPolynormal}}]\label{strong}
Let $D$ be a perfect subgroup of a group~$G$, normalizing a subgroup $H\le G$.
Then $[H,D,D]$ is normal in $H$ if and only if $[H,D,D]=[H,D]$.
\end{lem}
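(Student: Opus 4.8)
The plan is to reduce the statement to the Hall--Witt three subgroups lemma, using perfectness of $D$ at the very last step. The only nontrivial implication is that normality of $[H,D,D]$ in $H$ forces $[H,D,D]=[H,D]$; the reverse implication will be immediate once one observes that $[H,D]$ is automatically normal in $H$.

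First I would record some elementary facts that hold because $D$ normalizes $H$. Since $h^d\in H$ for all $h\in H$ and $d\in D$, we have $[H,D]\le H$. Using the commutator identity $[hh',d]=[h,d]^{h'}[h',d]$ one gets $[h,d]^{h'}=[hh',d][h',d]^{-1}\in[H,D]$, so $[H,D]$ is normalized by $H$; and from $[h,d]^{d'}=[h^{d'},d^{d'}]$ together with $h^{d'}\in H$ one sees that $[H,D]$ is normalized by $D$ as well. Hence $[H,D]\trianglelefteq\langle H,D\rangle$, and in particular $[H,D]\trianglelefteq H$. Applying the same two identities to the subgroup $[H,D]$ in place of $H$ shows that $C:=[H,D,D]=[[H,D],D]$ is normalized by $D$; combined with the hypothesis that $C$ is normal in $H$, this yields $C\trianglelefteq\langle H,D\rangle$.

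For the implication $(\Leftarrow)$ nothing remains: if $C=[H,D]$, then $C$ is normal in $H$ by the previous paragraph. For $(\Rightarrow)$ I would invoke the three subgroups lemma inside $\langle H,D\rangle$ with the normal subgroup $N=C$. Taking the three subgroups to be $D$, $D$, $H$ and using $[D,H]=[H,D]$, both hypotheses of the lemma read $[D,H,D]=[H,D,D]=C\subseteq C$ and $[H,D,D]=C\subseteq C$, so the lemma delivers $[D,D,H]\subseteq C$. Here is where perfectness enters: since $D=[D,D]$, we have $[D,H]=[[D,D],H]=[D,D,H]\subseteq C$, whence $[H,D]=[D,H]\subseteq C$. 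As the inclusion $C=[H,D,D]\subseteq[H,D]$ is obvious, this gives $[H,D,D]=[H,D]$, as required.

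All of the above consists of routine manipulations with the standard commutator identities; the only point that requires care is the bookkeeping in the three subgroups lemma, namely assigning the roles of the three subgroups so that the conclusion is exactly $[D,D,H]$ while the two hypotheses are the already known relation $[H,D,D]\subseteq C$. I expect this bookkeeping, together with the verification that $C$ is normal in the whole of $\langle H,D\rangle$ (so that the normal-subgroup version of the lemma applies), to be the main---and essentially the only---obstacle.
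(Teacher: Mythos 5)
Your proof is correct, and it follows the standard route: the paper itself only cites this lemma from \cite{StepPolynormal}, where the argument is likewise the three subgroups (Hall--Witt) lemma applied to the cyclic triple $[D,H,D]$, $[H,D,D]$, $[D,D,H]$ inside $\langle H,D\rangle$, with perfectness $D=[D,D]$ converting $[D,D,H]\subseteq[H,D,D]$ into $[H,D]\subseteq[H,D,D]$. Your preliminary verification that $[H,D,D]$ is normalized by $D$, hence normal in $\langle H,D\rangle$ once it is assumed normal in $H$, is exactly the bookkeeping needed for the normal-subgroup form of the lemma, so there is no gap.
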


\begin{lem}[{\cite[Proposition~1.9]{StepNormal}}]\label{surj}
Let $D\!\le\! G$ and let $\ph\colon G\!\to\!\ovl G$ be a group epimorphism.
The sandwich classification theorem for the lattice $L(D,G)$ implies the sandwich classification theorem for the lattice
$L(\ph(D),\ovl G)$. Moreover, $\ph(D)$\nobreakdash-full subgroups of $\ovl G$ are the images of
$D$-full subgroups of~$G$ and their normalizers are the images of the normalizers of the corresponding
$D$\nobreakdash-full subgroups of $G$.
\end{lem}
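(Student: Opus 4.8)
The plan is to transport the classification along $\ph$ by pulling subgroups back through $\ph^{-1}$, exploiting the one identity that makes everything work: since $\ph$ is a homomorphism it carries generating sets to generating sets, hence normal closures to normal closures, so that $\ph(D^H)=\ovl D^{\,\ph(H)}$ for every $H\le G$, where $\ovl D=\ph(D)$. I will also use the reformulation recalled before the statement: the sandwich classification theorem for $L(D,G)$ is equivalent to the assertion that every $H\in L(D,G)$ lies in the sandwich of $D^H$, i.e.\ that $D^H$ is $D$-full and $H\le N_G(D^H)$. This will be the only form of the hypothesis I need.

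First I would show that $L(\ovl D,\ovl G)$ splits into sandwiches and identify the $\ovl D$-full subgroups. Given $\ovl H\in L(\ovl D,\ovl G)$, put $H=\ph^{-1}(\ovl H)$; then $D\le H$ and $\ph(H)=\ovl H$ by surjectivity, so $H\in L(D,G)$. Applying the hypothesis to $H$, the subgroup $D^H$ is $D$-full, and pushing this forward shows that $\ovl F:=\ovl D^{\,\ovl H}=\ph(D^H)$ is $\ovl D$-full, because $\ovl D^{\,\ovl F}=\ph(D^{D^H})=\ph(D^H)=\ovl F$. As $\ovl F$ is normal in $\ovl H$ and contains $\ovl D$, we get $\ovl F\le\ovl H\le N_{\ovl G}(\ovl F)$, so $\ovl H$ lies in the sandwich of $\ovl F$. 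The same computation, read in both directions, identifies the $\ovl D$-full subgroups: the image of any $D$-full $F$ is $\ovl D$-full since $\ovl D^{\,\ph(F)}=\ph(D^F)=\ph(F)$, and conversely every $\ovl D$-full $\ovl F$ is such an image, namely of $F=D^{\ph^{-1}(\ovl F)}$, which is $D$-full by the hypothesis and satisfies $\ph(F)=\ovl D^{\,\ovl F}=\ovl F$.

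It remains to prove the normalizer assertion $N_{\ovl G}(\ovl F)=\ph(N_G(F))$ for the corresponding $D$-full subgroup; this is the one step that is not a formal manipulation of normal closures, and where I expect the real work to be. Fix the $\ovl D$-full subgroup $\ovl F$, set $H_0=\ph^{-1}(\ovl F)$ and $F=D^{H_0}$, so that $F$ is $D$-full with $\ph(F)=\ovl F$ and, by the hypothesis applied to $H_0$, we have $H_0\le N_G(F)$. The inclusion $\ph(N_G(F))\subseteq N_{\ovl G}(\ovl F)$ is immediate. For the reverse inclusion take $\ovl g\in N_{\ovl G}(\ovl F)$, put $\ovl H_2=\langle\ovl F,\ovl g\rangle\le N_{\ovl G}(\ovl F)$ and $H_2=\ph^{-1}(\ovl H_2)$, and apply the hypothesis to $H_2$: the subgroup $F_2:=D^{H_2}$ is $D$-full and $H_2\le N_G(F_2)$, so any preimage $g$ of $\ovl g$ lies in $H_2\le N_G(F_2)$.

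The crux is then to prove $F_2=F$. On the one hand $\ph(F_2)=\ovl D^{\,\ovl H_2}=\ovl F$, using that $\ovl F$ is normal in $\ovl H_2$ and $\ovl D$-full, so $F_2\le\ph^{-1}(\ovl F)=H_0$; on the other hand $H_0\le H_2$ gives $F=D^{H_0}\le D^{H_2}=F_2$. The $D$-fullness of $F_2$ now closes the loop: since $F_2\le H_0$, the normal closure of $D$ inside $F_2$ is contained in that inside $H_0$, whence $F_2=D^{F_2}\le D^{H_0}=F$, and therefore $F_2=F$. Consequently $g\in N_G(F_2)=N_G(F)$ and $\ovl g=\ph(g)\in\ph(N_G(F))$, which establishes $N_{\ovl G}(\ovl F)=\ph(N_G(F))$ and completes the proof. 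I expect the only genuinely delicate point to be this last identity $F_2=F$, where purely group-theoretic information (equal images under $\ph$ and $F\le F_2\le H_0$) is insufficient and one must invoke $D$-fullness to force the two preimages to coincide.
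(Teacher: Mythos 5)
Your proof is correct; note, however, that the paper contains no proof of this lemma to compare against --- it is imported verbatim from the cited reference (\cite[Proposition~1.9]{StepNormal}), so your argument can only be judged on its own merits, and it stands. The reformulation you use --- the sandwich classification theorem for $L(D,G)$ is equivalent to ``$D^H$ is $D$-full for every $H\in L(D,G)$'' --- is exactly the equivalence the paper records in the introduction (the unique $D$-full subgroup of the sandwich containing $H$ is $D^H$, and $H\le N_G(D^H)$ holds automatically), and the identity $\ph(D^H)=\ph(D)^{\ph(H)}$ is legitimate since $\ph$ carries the generating set $\{a^b\mid a\in D,\ b\in H\}$ onto the corresponding one downstairs. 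You also correctly isolate the one nontrivial point, the inclusion $N_{\ovl G}(\ovl F)\subseteq\ph\bigl(N_G(F)\bigr)$, and your resolution is watertight: pulling back $\langle\ovl F,\ovl g\rangle$ to $H_2$, setting $F_2=D^{H_2}$, and forcing $F_2=F$ by sandwiching $F\le F_2\le H_0=\ph^{-1}(\ovl F)$ and then invoking $D$-fullness of $F_2$ to get $F_2=D^{F_2}\le D^{H_0}=F$. One interpretive caveat is worth making explicit: $\ph$ need not be injective on $D$-full subgroups (several may share one image --- compare Lemma~\ref{central}, where injectivity is a separate claim requiring a central kernel), so the clause ``normalizers of the corresponding $D$-full subgroups'' must be read via the canonical correspondence $\ovl F\mapsto D^{\ph^{-1}(\ovl F)}$, which is precisely the choice your proof makes; for an arbitrary $D$-full preimage $F'$ only the formal inclusion $\ph\bigl(N_G(F')\bigr)\subseteq N_{\ovl G}(\ovl F)$ is available.
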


The next two lemmas allows to lift the sandwich classification theorem to a central extension.

\begin{lem}\label{central0}
Let $D$ be a perfect subgroup of $G$, \ $F$ a $D$-full subgroup of $G$, and 
$\pi\colon S\twoheadrightarrow G$ an epimorphism with a central kernel.
\begin{enumerate}
\item
There exists the smallest subgroup $\wt D\le S$ such that $\pi(\wt D)=D$.
\item
Let $\wt F$ be the smallest, whereas $\wh F$ an arbitrary preimage of $F$ under $\pi$
(since any $D$-full subgroup is perfect, $\wt F$ exist in accordance to the first item).
For subgroups $\wh D$ and $\wh F$ the group $S$ such that $\pi(\wh D)=D$ and $\pi(\wh F)=F$
we have $[\wh F,\wh F]=[\wh D,\wh F]=\wt F$. In particular the group $\wt D$ is perfect
and $\wt F$ is $\wt D$-full.
\end{enumerate}
\end{lem}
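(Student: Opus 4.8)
The plan is to establish the two items essentially by tracking how the central kernel $C = \Ker\pi$ interacts with preimages of $D$ and $F$. The central hypothesis is crucial: because $C$ is central in $S$, commutators are insensitive to the choice of preimage, and this will be the engine driving every equality.

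For item (1), I would argue as follows. Fix any preimage $\wh D$ of $D$, so $\pi(\wh D)=D$; the set of \emph{all} preimages of $D$ consists of subgroups $H\le S$ with $\pi(H)=D$, and these are exactly the subgroups $H$ with $[\wh D, C]\subseteq$ — more precisely, with $\wh D \le HC$ and $\pi(H)=D$. The natural candidate for the smallest preimage is $\wt D = [\wh D,\wh D]$, the commutator subgroup of any preimage. First I would check this is well defined, i.e.\ independent of the choice of $\wh D$: if $\wh D'$ is another preimage then $\wh D' \subseteq \wh D\, C$, and since $C$ is central, $[\wh D', \wh D'] = [\wh D\,C, \wh D\,C] = [\wh D,\wh D]$, so the commutator subgroup does not see the central ambiguity. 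Next, $\pi(\wt D)=\pi([\wh D,\wh D])=[D,D]=D$ because $D$ is perfect; hence $\wt D$ is genuinely a preimage of $D$. Finally, $\wt D$ is smallest: any preimage $H$ satisfies $H\supseteq [H,H]=[\wh D,\wh D]=\wt D$ by the same central-insensitivity computation. This also immediately shows $\wt D$ is perfect, since $[\wt D,\wt D]=[\wh D,\wh D]=\wt D$.

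For item (2), the key input is that $F$ is $D$-full and therefore perfect, with $F=[F,D]=[D,F]$ (using the remark preceding Lemma~\ref{strong} that $D^F=[F,D]$ for perfect $D$). Given arbitrary preimages $\wh D,\wh F$ with $\pi(\wh D)=D$, $\pi(\wh F)=F$, I would apply $\pi$ to $[\wh F,\wh F]$ and to $[\wh D,\wh F]$: centrality of $C$ makes both independent of the choice of preimages, and $\pi([\wh F,\wh F])=[F,F]=F$, $\pi([\wh D,\wh F])=[D,F]=F$. Thus $[\wh F,\wh F]$ and $[\wh D,\wh F]$ are both preimages of $F$ that are commutator subgroups, hence by the same central-insensitivity argument they coincide with the smallest preimage $\wt F=[\wh F,\wh F]$ for $\wh F$ the minimal preimage. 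It remains to identify $[\wh D,\wh F]=[\wh F,\wh F]$: since both equal a commutator subgroup mapping onto $F$ and the smallest preimage of $F$ is unique, the equalities $[\wh F,\wh F]=[\wh D,\wh F]=\wt F$ follow. That $\wt F$ is $\wt D$-full then reads $\wt D^{\wt F}=[\wt F,\wt D]=\wt F$, which is exactly the computed equality $[\wh D,\wh F]=\wt F$ specialized to the minimal preimages.

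\textbf{The main obstacle} I anticipate is bookkeeping the quantifier in item (2): the statement asserts the equality holds for \emph{every} pair of preimages $\wh D,\wh F$, not merely the minimal ones, and one must be careful that $[\wh D,\wh F]$ really lands in $\wt F$ rather than in some larger preimage. The resolution is precisely the centrality of $C$: writing any preimage as $\wh F = \wt F\cdot C'$ for a suitable subset $C'\subseteq C$ and expanding the commutator, every term involving $C$ dies because $C$ is central, so $[\wh D,\wh F]$ collapses to $[\wt D,\wt F]$ regardless of the chosen representatives. Once this ``commutators ignore the center'' principle is stated cleanly as a lemma-level observation, the rest is a short diagram chase applying $\pi$ and using perfectness of $D$ and $D$-fullness of $F$.
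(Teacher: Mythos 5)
Your proof is correct and follows essentially the same route as the paper's: both rest on the observation that, since $\Ker\pi$ is central and $\wh D\,\Ker\pi=\pi^{-1}(D)$, $\wh F\,\Ker\pi=\pi^{-1}(F)$, commutator subgroups such as $[\wh D,\wh D]$, $[\wh F,\wh F]$, $[\wh D,\wh F]$ do not depend on the chosen preimages, which yields $\wt D=[\wh D,\wh D]$ and $\wt F=[\wh F,\wh F]=[\wh D,\wh F]$ (the paper merely takes the mixed commutator $[\pi^{-1}(F),\pi^{-1}(D)]$ as the definition of $\wt F$ and reads the same computation in the other direction). One step should be made explicit, since your intermediate phrasing \emph{``a commutator subgroup mapping onto $F$ coincides with the smallest preimage''} is not a valid principle on its own: as $F=D^F\ge D$, one has $\wt D\le\wt F$, so $[\wt D,\wt F]\le[\wt F,\wt F]=\wt F$, and this inclusion, combined with your centrality collapse $[\wh D,\wh F]=[\wt D,\wt F]$ and the minimality of $\wt F$, is what actually pins $[\wh D,\wh F]$ down to $\wt F$.
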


\begin{proof}
Put $\wt D\!=\![\pi^{-1}(D),\pi^{-1}(D)]$ и $\wt F\!=\![\pi^{-1}(F),\pi^{-1}(D)]$. Since $D$ is perfect
and $F$ is $D$-full, $\pi(\wt D)\!=\!D$ and ${\pi(\wt F)\!=\!F}$.
For arbitrary preimages $\wh D$ and $\wh F$ of the groups $D$ and $F$ respectively we have
$\pi^{-1}(D)=\wh D\Ker\ph$ and $\pi^{-1}(F)=\wh F\Ker\ph$.
Hence,
$$
\wt F=[\pi^{-1}(F),\pi^{-1}(D)]=[\wh F\Ker\ph,\wh D\Ker\ph]=[\wh F,\wh D]
$$
as $\Ker\ph$ lies in the center of $S$. For $F=D$ we get $\wt D\le\wh D$,
otherwise, for $\wh D=\wt D$ we conclude that $\wt D$ is perfect. In general it is clear that $\wh F$ contains
$\wt D$. This observation with $\wh D=\wt D$ implies that
$\wt F=[\wh F,\wt D]\le \wh F$. Thus,
$\wt F$ as well as $\wt D$ is the smallest preimage.
Finally, the second assertion of the lemma is the general case of the displayed formula.

Note that $\wt F$ can be defined by the same formula as $\wt D$, i.\,e.
$\wt F=[\wh F,\wh F]$.
\end{proof}

\begin{lem}\label{central}
In the notation of the previous lemma the sandwich classification theorems for the lattices $L(D,G)$ and $L(\wt D,S)$
are equivalent and $\pi$ induces a bijection between the set of all $\wt D$-full subgroups of $S$ onto the set of
of all $D$-full subgroups of $G$.
\end{lem}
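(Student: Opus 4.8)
The plan is to derive both assertions from the two reformulations recalled just before Lemma~\ref{strong}: for a perfect subgroup $D$ and $H\in L(D,G)$ one has $D^H=[H,D]$, and the sandwich classification theorem for $L(D,G)$ is equivalent to the equalities $[H,D,D]=[H,D]$ for all $H\in L(D,G)$, i.e.\ to $D^H$ being $D$-full for every such $H$. Writing $Z=\Ker\pi$, I shall use throughout the elementary fact exploited already in the proof of Lemma~\ref{central0}, that in a central extension the commutator of two elements depends only on their images; hence $[\wt A,\wt B]=[\pi^{-1}(A),\pi^{-1}(B)]$ for any subgroups $A,B\le G$ and any preimages $\wt A,\wt B$ of them.

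For the bijection I would first note that $\pi$ carries $\wt D$-full subgroups to $D$-full subgroups: if $\wt F=[\wt F,\wt D]$ then $\pi(\wt F)=[\pi(\wt F),D]$. The crucial point is that every $\wt D$-full subgroup $\wt F$ is perfect (any full subgroup is), so by the concluding remark of Lemma~\ref{central0} it equals $[\wt F,\wt F]$, that is, it is the smallest preimage of $\pi(\wt F)$. Together with the last assertion of Lemma~\ref{central0} — that the smallest preimage of a $D$-full subgroup is $\wt D$-full — this identifies the $\wt D$-full subgroups of $S$ with exactly the smallest preimages of the $D$-full subgroups of $G$. Since the smallest preimage is unique, $\pi$ restricts to a bijection onto the $D$-full subgroups of $G$.

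For the equivalence of the sandwich classification theorems, one implication is immediate: applying Lemma~\ref{surj} to the epimorphism $\pi\colon S\to G$ and the subgroup $\wt D$ shows that the theorem for $L(\wt D,S)$ yields the theorem for $L(\pi(\wt D),G)=L(D,G)$. The substantial direction is the converse, which I expect to be the main obstacle. Fix $\wt H\in L(\wt D,S)$ and put $\wt A=[\wt H,\wt D]=\wt D^{\wt H}$; the goal is $[\wt A,\wt D]=\wt A$. The decisive observation is that $\wt A$, being the normal closure of the perfect subgroup $\wt D$ in $\wt H$, is itself perfect: it is generated by the conjugates $\wt D^{\wt h}$, each perfect and contained in $\wt A$, whence $[\wt A,\wt A]\supseteq\wt D^{\wt h}$ for all $\wt h$ and so $[\wt A,\wt A]=\wt A$. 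Now $F:=\pi(\wt A)=[H,D]$ is $D$-full by the assumed theorem for $L(D,G)$, so Lemma~\ref{central0} applies to $F$ with $\wt A$ as a preimage of $F$ and $\wt D$ as a preimage of $D$. Its second assertion, together with the concluding remark, gives $[\wt A,\wt A]=[\wt D,\wt A]=\wt F$, the smallest preimage of $F$. Perfectness of $\wt A$ then forces $\wt A=[\wt A,\wt A]=[\wt D,\wt A]=[\wt H,\wt D,\wt D]$, which is precisely $[\wt H,\wt D,\wt D]=[\wt H,\wt D]$. As $\wt H$ was arbitrary, the theorem holds for $L(\wt D,S)$.

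The only genuine difficulty sits in the previous paragraph: recognizing that the relevant preimage $[\wt H,\wt D]$ is \emph{automatically} perfect and therefore coincides with the smallest preimage produced by Lemma~\ref{central0}. Once this is seen, the whole argument collapses onto the formula $[\wt A,\wt A]=[\wt D,\wt A]=\wt F$ of Lemma~\ref{central0}. The remaining steps I would still verify explicitly, though they are immediate from facts already in place, are that $[\wt H,\wt D]$ really is the normal closure $\wt D^{\wt H}$ (this uses perfectness of $\wt D$, as recalled before Lemma~\ref{strong}) and that $\pi(\wt A)=[H,D]$.
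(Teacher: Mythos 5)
Your proof is correct, and its skeleton coincides with the paper's: Lemma~\ref{surj} gives the direction from $L(\wt D,S)$ down to $L(D,G)$, and for the bijection both you and the paper observe that a $\wt D$-full subgroup is perfect, hence by the concluding remark of Lemma~\ref{central0} equals its own commutator subgroup and is therefore the unique smallest preimage of its ($D$-full) image, while item~(2) of Lemma~\ref{central0} supplies surjectivity. The one genuine divergence is how you close the converse direction. The paper, having computed $[H,\wt D,\wt D]=\bigl[[H,\wt D],[H,\wt D]\bigr]=\wt F$ from Lemma~\ref{central0}, notes that this subgroup is normal in $H$ (a commutator subgroup of the normal subgroup $[H,\wt D]$) and then invokes the normality criterion of Lemma~\ref{strong} to upgrade this to $[H,\wt D]=\wt F$. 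You bypass Lemma~\ref{strong} entirely: since $\wt D$ is perfect and $\wt D\le\wt H$, the normal closure $\wt A=\wt D^{\wt H}=[\wt H,\wt D]$ is generated by the perfect conjugates $\wt D^{\wt h}$ and so is perfect a priori, whence the identity $[\wt A,\wt A]=[\wt D,\wt A]=\wt F$ of Lemma~\ref{central0} forces $\wt A=[\wt A,\wt A]=\wt F=[\wt H,\wt D,\wt D]$ at once. This shortcut is sound and slightly more economical than the paper's: the hypothesis of Lemma~\ref{central0}(2) that $\pi(\wt A)=[H,D]$ be $D$-full is exactly what the assumed sandwich classification for $L(D,G)$ provides, so there is no circularity; and note that perfectness of $\wt A$ alone would not suffice without the central-extension identity $[\wt A,\wt A]=[\wt D,\wt A]$, which you correctly take from Lemma~\ref{central0}. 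The paper's route via Lemma~\ref{strong} keeps the argument in the generic pattern used throughout (normality of $[H,D,D]$ in $H$ as the test for fullness); yours trades that uniformity for a shorter, self-contained deduction from Lemma~\ref{central0} alone.
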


\begin{proof}
It follows from the Lemma~\ref{surj} that the sandwich classification theorem for the lattice $L(\wt D,S)$ implies
the sandwich classification theorem for the lattice $L(D,G)$ and the map induced by $\pi$ is surjective.
Injectivity of this map follows immediately from the previous lemma as in the set of all preimages
of a $D$-full subgroup is only one $\wt D$-full, namely the smallest one.

Now, suppose that the sandwich classification theorem holds for the lattice $L(D,G)$.
Let $H\in L(\wt D,S)$. Then $\pi(H)$ normalizes some $D$-full subgroup~$F$. 
Let $\wt F$ be the smallest preimage of~$F$ in~$S$. Then
$\pi([H,\wt D])=[\pi(H),D]=F$. By the previous lemma 
$\bigl[H,\wt D,\wt D\bigr]=\bigl[[H,\wt D],[H,\wt D]\bigr]=\wt F$.
Hence, $[H,\wt D,\wt D]$ is normal in $H$ as a commutator subgroup of a normal subgroup.
Now, Lemma~\ref{strong} implies that $[H,\wt D]=[H,\wt D,\wt D]=\wt F$, which completes the proof.
\end{proof}

Clearly, if $G\ge E\ge D$, then the sandwich classification of the lattice $L(D,G)$ implies
the sandwich classification of the lattice $L(D,E)$. Moreover, $D$-full subgroups of the latter lattice are just
$D$-full subgroups of the lattice $L(D,G)$ contained in $E$.
The converse is not true in general, even if we assume that $D$ is perfect, $H$ normal in $G$,
and $G/H$ is abelian.%
\footnote{Let $S$ be a nonabelian simple group, $E=S*S$, $D$ the normal closure of one of free factors in $E$, and
$G\cong E\rtimes \Z/2\Z$ extension of $E$ by the automorphism that changes free summands. Then the lattice $L(D,E)$ 
consists of one sandwich,
$D^G=E$, whereas $D^{D^G}=D$, i.\,e. the sandwich classification theorem does not hold for $L(D,G)$.}

However, under certain additional assumption this implication holds.
This statement will allow us to extend the sandwich classification theorem from the elementary subgroup
to the whole Chevalley group.

\begin{lem}\label{extendSC}
Let $D\le E\triangleleft G$. Assume that the sandwich classification theorem holds for $L(D,E)$.
Suppose further that the following conditions are satisfied.
\begin{enumerate}
\item\label{generated}
The group $D$ is generated by a union of finitely generated perfect subgroups.
\item\label{quasi-soluble}
Given a $D$-full subgroup $F\le E$, the quotient group $N_E(F)/F$ is quasi-solvable, i.\,e. is the union of
an ascending chain of solvable groups.
\end{enumerate}
Then the lattice $L(D,G)$ satisfies the sandwich classification theorem and the sets of 
$D$-full subgroups in $G$ and $E$ coincide.
\end{lem}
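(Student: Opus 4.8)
```latex
\begin{proof}[Proof plan]
The plan is to reduce the sandwich classification for $L(D,G)$ to the already-available
classification for $L(D,E)$ by analysing how an arbitrary $H\in L(D,G)$ interacts with
the normal subgroup~$E$. The key observation is that since $E\triangleleft G$ and
$D\le E$, for any $H\in L(D,G)$ the intersection $H\cap E$ lies in $L(D,E)$, and
conjugation by $H$ permutes the subgroups of~$E$. Thus $D^H\cap E$ (equivalently
$[H,D]$, using that $D$ is perfect and the remark preceding Lemma~\ref{strong}) is a
subgroup of~$E$ normalized by $H$, and one checks it is $D$-full in~$E$. Writing
$F=D^H$, the goal is exactly to show $[H,D,D]=[H,D]$, i.e. $[H,D,D]$ is normal
in~$H$ by Lemma~\ref{strong}. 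So the whole argument will come down to controlling the
action of $H$ on the $D$-full subgroup $F\cap E$ of~$E$ supplied by the hypothesis.

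First I would fix $H\in L(D,G)$ and set $F_0=[H,D]\cap E$; by normality of~$E$ and
perfectness of~$D$ this is a subgroup of~$E$ normalized by~$H$, and it is $D$-full
in~$E$, so by the sandwich classification for $L(D,E)$ it is of the form $D^{H\cap E}$
with $H$ lying inside $N_E(F_0)$ after intersecting with~$E$. Since $H$ normalizes
$F_0$, conjugation gives a homomorphism $H\to\operatorname{Aut}(F_0)$ whose image lands
in the normalizer, and the induced action on $N_E(F_0)/F_0$ is where hypothesis
\ref{quasi-soluble} enters: this quotient is quasi-solvable. The role of hypothesis
\ref{generated} is to guarantee that $D$, being generated by finitely generated perfect
subgroups, cannot map nontrivially into a quasi-solvable (hence for each finitely
generated perfect piece, trivially-acting) quotient — a finitely generated perfect group
has no nontrivial solvable quotients, and passing to the union of an ascending chain of
solvable groups preserves this for each generator. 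Hence the commutator $[H,D]$ acts
trivially on $N_E(F_0)/F_0$, which forces $[H,D,D]\le F_0$.

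The crux is then to upgrade the inclusion $[H,D,D]\le F_0$ to the normality of
$[H,D,D]$ in~$H$, so that Lemma~\ref{strong} applies. Here I would argue that
$[H,D,D]=[F_0,D]$ (using $[H,D]$ differs from $F_0$ only by a central-acting part that
is killed upon taking a further commutator with~$D$), and that $[F_0,D]$ is normalized
by~$H$ because both $F_0$ and~$D$ are. Once $[H,D,D]$ is seen to be normal in~$H$,
Lemma~\ref{strong} yields $[H,D,D]=[H,D]$, which is precisely the sandwich
classification theorem for $L(D,G)$ in the reformulation stated before Lemma~\ref{strong}.
Finally, the assertion that the $D$-full subgroups of~$G$ and of~$E$ coincide follows
because each $D$-full subgroup equals $D^H=[H,D]$ for some~$H$, and the argument above
shows this lands in~$E$; conversely every $D$-full subgroup of~$E$ is visibly $D$-full
in~$G$.

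The main obstacle I expect is the step that combines hypotheses \ref{generated} and
\ref{quasi-soluble} to kill the action of~$D$ on $N_E(F_0)/F_0$: one must be careful
that \emph{quasi-solvable} only means a union of an ascending chain of solvable groups,
so a single finitely generated perfect subgroup of~$D$ lands in one term of the chain and
maps to zero there, but assembling this over all generators of~$D$ and over the ascending
chain — and then deducing that the full $[H,D]$, not just~$D$ itself, acts trivially —
requires the generation hypothesis to be used exactly right. Getting from
$[H,D,D]\le F_0$ to $[H,D,D]\trianglelefteq H$ cleanly is the second delicate point.
\end{proof}
```
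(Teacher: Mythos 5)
There is a genuine gap, and it sits exactly at the point you flagged as delicate. You set $F_0=[H,D]\cap E$; since $E\triangleleft G$ and $D\le E$, in fact $[H,D]\le E$ already, and ($D$ being perfect, so $D\le[D,D]\le[H,D]$) one has $F_0=[H,D]=D^H$. You then assert that $F_0$ \emph{is} $D$-full in $E$ --- but that is precisely the statement to be proved: the sandwich classification for $L(D,G)$ amounts to $D^H$ being $D$-full for every $H\in L(D,G)$. With this circular start the middle paragraph becomes vacuous: $D\le F_0$, so $D$ and $[H,D]$ act trivially on $N_E(F_0)/F_0$ for the trivial reason that they lie inside $F_0$, and the inclusion $[H,D,D]\le F_0$ you extract is automatic, since $[H,D]$ is normalized by $H$. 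Worse, hypothesis~(2) is stated only for $D$-full subgroups, so you cannot even invoke quasi-solvability of $N_E(F_0)/F_0$ before knowing $F_0$ is $D$-full. The paper avoids all of this by applying the sandwich classification of $L(D,E)$ to $D^H\in L(D,E)$: it lies in a sandwich $L(F,N)$ whose $D$-full subgroup $F=D^{D^H}$ is \emph{a priori smaller} than $D^H$, with $N=N_E(F)$. The mechanism you correctly identified is then aimed at this $F$: write $N=\bigcup N_i$ with each $N_i/F$ solvable; since every $D$-full subgroup is perfect (being generated by the conjugates $D^f$), $F$ is the largest perfect subgroup of each $N_i$; each conjugate $\widetilde D^{\,h}$ of a finitely generated perfect generator $\widetilde D\le D$ is finitely generated and perfect, lies in $D^H\le N$, hence in some $N_i$, hence in $F$. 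As $h\in H$ is arbitrary and such $\widetilde D$ generate $D$, this gives $D^H\le F\le D^H$, so $D^H=F$ is $D$-full and $F\le H\le N_G(F)$ is the sandwich --- with no appeal to Lemma~\ref{strong} at all.

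Your fallback route through Lemma~\ref{strong} also breaks at its key step: you claim $[F_0,D]$ is normalized by $H$ \emph{because both $F_0$ and $D$ are}, but $D$ is not normalized by $H$ (if it were, $D^H=D$ and the lemma would be near-trivial); only $D^H$ is. Conjugation gives $[F_0,D]^h=[F_0^h,D^h]=[F_0,D^h]$, which need not equal $[F_0,D]$, so the normality of $[H,D,D]$ in $H$ --- the hypothesis of Lemma~\ref{strong} --- is never established; moreover, with your $F_0=[H,D]$ the identity $[H,D,D]=[F_0,D]$ is a tautology rather than an upgrade. In short: your raw ingredient (a finitely generated perfect subgroup lands in one term of the ascending chain and has trivial image in its solvable quotient) is exactly the paper's, but you aim it at the normal closure $D^H$ itself instead of at the genuinely $D$-full subgroup $F=D^{D^H}$ furnished by the hypothesis on $L(D,E)$; without that pivot the argument is circular where it has content and vacuous where it is sound. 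Your final remark on the coincidence of the sets of $D$-full subgroups is fine once the main claim is repaired.
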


\begin{proof}
Since $D$ is generated by a union of perfect subgroups, it is perfect itself.
Then any $D$-full subgroup $F$ is perfect as well as it is generated by perfect subgroups $D^f$ 
over all $f\in F$.

Let $H\le G$. Since $E$ is normal in $G$, the subgroup $D^H$ belongs to the lattice $L(D,E)$ and hence,
is contained in some sandwich $L(F,N)$. Let $\wt D$ be a perfect finitely generated subgroup
in $D$ and $h\in H$. Then the subgroup $\wt D^h$ is finitely generated and perfect.
By condition~\eqref{quasi-soluble} $N$ is the union of an ascending chain of subgroups
$N_i$ such that $N_i/F$ are solvable. In particular, $F$ is the largest perfect subgroup in each $N_i$.
Sine $\wt D^h$ is finitely generated it is contained in
$N_i$ for some index $i$. Hence, $\wt D^h\le F$. Since $h$ is an arbitrary element of the group $H$ and $D$ is generated by
its perfect finitely generated subgroups, we obtain $D^H\le F$. Thus, given a subgroup~$H$ in~$G$ the group $D^H=F$ is 
$D$-full, as required.

The statement about the sets of $D$-full subgroups is obvious.
\end{proof}

Properties~(1) and~(2) of the previous lemma are internal properties of the group~$E$.
With a help of Lemma~\ref{surj} it easy to see that these properties are preserved by epimorphisms.
Therefore, the sandwich classification theorem can be extended from an epimorphic image of the group~$E$
to its arbitrary extension.

\begin{cor}\label{cor:extendSC}
Let $D\le E'\le E$.
Suppose that groups~$D$ and~$E$ satisfy the conditions of Lemma~{\rm\ref{extendSC}},
the sandwich classification theorem holds for the lattice $L(D,E)$,
and let $\ph\colon E'\to\ovl E$ be an epimorphism. Then, given a subgroup $\ovl G\triangleright\ovl E$
the lattice $L\bigl(\ph(D),\ovl G\bigr)$ satisfies the sandwich classification theorem and
$\ph(D)$-full subgroups of this lattice are the images of $D$-full subgroups of the group $E'$.
\end{cor}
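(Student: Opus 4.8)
The plan is to chain together the two transfer principles already established — Lemma~\ref{surj}, which pushes the sandwich classification theorem forward along an epimorphism, and Lemma~\ref{extendSC}, which lifts it to a normal overgroup — after first cutting the hypothesis down to $E'$. Since $D\le E'\le E$, the remark preceding Lemma~\ref{extendSC} shows that the sandwich classification theorem for $L(D,E)$ restricts to one for $L(D,E')$, the $D$-full subgroups of $L(D,E')$ being exactly those $D$-full subgroups of $L(D,E)$ that lie in $E'$. This makes $\ph$ act on a lattice for which the theorem is already available.

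Next I would apply Lemma~\ref{surj} to the epimorphism $\ph\colon E'\to\ovl E$. It yields the sandwich classification theorem for $L(\ph(D),\ovl E)$ and identifies the $\ph(D)$-full subgroups of $\ovl E$ as the images $\ph(F)$ of the $D$-full subgroups $F\le E'$, with $N_{\ovl E}(\ph(F))=\ph(N_{E'}(F))$.

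The crux is to verify that the pair $(\ph(D),\ovl E)$ again satisfies hypotheses~(1) and~(2) of Lemma~\ref{extendSC}. Condition~(1) is immediate: if $D$ is generated by finitely generated perfect subgroups $D_i$, then $\ph(D)$ is generated by the images $\ph(D_i)$, each still finitely generated and perfect. Condition~(2) is the delicate point, since the normalizer quotient must be controlled through both the restriction to $E'$ and the map $\ph$. Given a $\ph(D)$-full subgroup $\ovl F=\ph(F)$ with $F\le E'$ being $D$-full, note first that $N_{E'}(F)=N_E(F)\cap E'$ contains $F$ and lies in $N_E(F)$, so $N_{E'}(F)/F$ is a subgroup of the quasi-solvable group $N_E(F)/F$ and is therefore itself quasi-solvable. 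Then the composite of $\ph$ with the projection onto $N_{\ovl E}(\ovl F)/\ovl F$ is a surjection $N_{E'}(F)\twoheadrightarrow N_{\ovl E}(\ovl F)/\ovl F$ whose kernel contains $F$; hence it factors through $N_{E'}(F)/F$, and since quasi-solvability passes to quotients, $N_{\ovl E}(\ovl F)/\ovl F$ is quasi-solvable.

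Finally, with both hypotheses verified for $(\ph(D),\ovl E)$ and the theorem already in hand for $L(\ph(D),\ovl E)$, Lemma~\ref{extendSC} applied to $\ph(D)\le\ovl E\triangleleft\ovl G$ yields the sandwich classification theorem for $L(\ph(D),\ovl G)$ and shows that the $\ph(D)$-full subgroups of $\ovl G$ and of $\ovl E$ coincide. Combined with the description of the $\ph(D)$-full subgroups of $\ovl E$ from the second step, this identifies them with the images of the $D$-full subgroups of $E'$, as required. I expect the only real obstacle to be the bookkeeping in condition~(2) — confirming that quasi-solvability survives both the passage to the subgroup $N_{E'}(F)$ and the quotient by $\ph$ — while everything else is a formal application of the quoted lemmas.
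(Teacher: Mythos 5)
Your proof is correct and follows essentially the same route as the paper's: restrict the sandwich classification theorem from $L(D,E)$ to $L(D,E')$ (using $N_{E'}(F)=N_E(F)\cap E'$), push it forward along $\ph$ via Lemma~\ref{surj}, check that conditions~(1) and~(2) of Lemma~\ref{extendSC} hold for the pair $\bigl(\ph(D),\ovl E\bigr)$, and then apply Lemma~\ref{extendSC} to $\ph(D)\le\ovl E\triangleleft\ovl G$. The only difference is cosmetic: you spell out why quasi-solvability survives passage to the subgroup $N_{E'}(F)$ and to the quotient $N_{\ovl E}(\ovl F)/\ovl F$, a verification the paper dismisses as obvious.
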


\begin{proof}
Clearly the lattice $L(D,E')$ satisfies the sandwich classification theorem, $D$-full subgroups of this lattice
are $D$-full subgroups in $E$, and $N_{E'}(F)=N_E(F)\cap E'$.
Therefore, the conditions of the previous lemma hold for groups $D$ and $E'$ as well.

By Lemma~\ref{surj} the lattice $L\bigl(\ph(D),\ovl E\bigr)$ satisfies the sandwich classification theorem,
$\ph(D)$-full subgroups of $\ovl E$ are the images of 
$D$-full subgroups of $E'$, and their normalizers are the images of the normalizers of the corresponding $D$-full subgroups
of $E'$. Obviously, the property~\eqref{generated} of Lemma~\ref{extendSC} as well as the property of being
quasi-solvable are inherited by epimorphic images. 
Now the result follow from Lemma~\ref{extendSC}.
\end{proof}

\section{Exceptional morphism}\label{sec:exceptional}
In this section for group schemes over $\mathbb F_2$ we construct morphisms from
$\Sp_{2l}=\GG_{\mathrm{sc}}(C_l,\,\blank\,)$ to $\opn{Spin}_{2l+1}=\GG_{\mathrm{sc}}(B_l,\,\blank\,)$ and back
whose composition in any order is equal to the Frobenius endomorphism.
On elementary groups over fields these morphisms are mensioned in the book by R.~Steinberg~\cite[теорема~28]{Steinberg}.
For $l=2$ over a finite field this morphism is a key point in a construction of the Suzuki groups.

We start with recalling the construction of the Steinberg group.
Let $R$ be an algebra over the field~$\mathbb F_2$ and $\Phi\ne G_2$.
In this case the Steinberg group $\St(\Phi,R)$ is generated by symbols $y_\alpha(r)$, where
$\alpha\in\Phi$, and $r\in R$ subject to relations:
{\allowdisplaybreaks\begin{enumerate}
\item
$y_\alpha(r)y_\alpha(s)=y_\alpha(r+s)$;
\item
$[y_\alpha(r),y_\beta(s)]=1$, if $\alpha+\beta\notin\Phi$ or $\alpha\perp\beta$;
\item
$[y_\alpha(r),y_\beta(s)]=y_{\alpha+\beta}(rs)$, if $\alpha$, $\beta$, and $\alpha+\beta$ are of the same length;
\item
$[y_\alpha(r),y_\beta(s)]=y_{\alpha+\beta}(rs)y_{\alpha+2\beta}(rs^2)$, if
$\alpha$ is long, whereas $\beta$ and $\alpha+\beta$ are short roots.
\end{enumerate}}

To avoid a confusion let us denote the generators of the Steinberg group of type~$C_l$ by $c_\alpha(r)$
and of the Steinberg group of type~$B_l$ by $b_\alpha(r)$.
We use the standard presentation of the root systems~$B_l$ and~$C_l$ in a euclidean space with an orthonormal basis~$e_i$:
$$
B_l=\{e_i,\ e_i-e_j\mid 1\le i,j\le l,\ i\ne j\} \text{ and }
C_l=\{2e_i,\ e_i-e_j\mid 1\le i,j\le l,\ i\ne j\}.
$$

Define maps $\ph$ and $\psi$ between the generating sets of the groups $\St(C_l,R)$ and $\St(B_l,R)$ by the
following formulas.
%
%
%
\begin{equation}\label{DefMorphisms}
\begin{aligned}
\ph\bigl(c_\alpha(r)\bigr)=
\begin{cases}b_{\alpha/2} (r),&\text{if $\alpha$ is long},\\
             b_{\alpha} (r^2),&\text{if $\alpha$ is short};\end{cases}
\\
\psi\bigl(b_\alpha(r)\bigr)=
\begin{cases}c_{\alpha}   (r),&\text{if $\alpha$ is long},\\
             c_{2\alpha}(r^2),&\text{if $\alpha$ is short}.\end{cases}
\end{aligned}
\end{equation}
(the same letters will denote the maps between the generating sets of the groups $\E(C_l,R)$ and $\E(B_l,R)$\,).
It is easy to verify that both~$\ph$ and~$\psi$ take relations to relations. Hence, they can be extended
to group homomorphisms
$
\St(C_l,R)\mathop{\leftrightarrows}\limits_\ph^\psi\St(B_l,R).
$

Recall that the group $K_2(\Phi,R)$ is the kernel of the natural map $\St(\Phi,R)\to\GG_{\mathrm{sc}}(\Phi,R)$
sending $y_\alpha(r)$ to $x_\alpha(r)$.
If $F$ is a field, then $K_2(\Phi,F)$ is generated by the elements $\{r,s\}=h_\alpha(r)h_\alpha(s)h_\alpha(rs)^{-1}$
over all $\alpha\in\Phi$ and invertible elements $r,s\in F$,
where $h_\alpha(r)=w_\alpha(r)w_\alpha(-1)$ and $w_\alpha(r)=y_\alpha(r)y_{-\alpha}(-r^{-1})y_\alpha(r)$,
see e.\,g.~\cite[\S6]{Steinberg}.
In case $\Phi\ne C_l$ the elements $\{r,s\}$ are the Steinberg symbols, i.\,e. they satisfy
certain relations. If $\Phi=C_l$ there are less relations between these elements, but this is not important for our purposes.
Straightforward computation shows that the generators of $K_2$ goes to $K_2$ under the homomorphisms $\ph$ and $\psi$, 
therefore these maps induce the group homomorphisms between $\GG_{\mathrm{sc}}(C_l,F)=\E_{\mathrm{sc}}(C_l,F)$ and
$\GG_{\mathrm{sc}}(B_l,F)=\E_{\mathrm{sc}}(B_l,F)$. Denote these homomorphisms by~$\bar\ph$ and~$\bar\psi$ respectively.
It is easy to see that the images of these homomorphisms are equal to the elementary groups, corresponding to
the admissible pair $(F,F^2)$.

Our next aim is to show that these homomorphisms are regular, i.\,e. that they are induced by group scheme morphisms,
and hence, are defined over an arbitrary ring. Morally, this follows from the fact that a simply connected Chevalley
group is the sheafification of its elementary subgroup, but formally we can not apply this argument as
$\bar\ph$ and $\bar\psi$ even are not defined over all local rings.
The leading idea of the proof is that the morphism of affine schemes is uniquely defined by the image of
the generic element and that the affine algebra of a Chevalley group is a domain.

The notion of generic element rarely appears in the theory of algebraic groups, therefore we recall
some relevant definitions.
Let $G$ be an arbitrary affine scheme over a ring~$K$. By definition, $G$ is a functor from the category of $K$-algebras
to the category of sets, isomorphic to the functor
$\opn{Hom}_{K-\mathrm{alg}}(A,\,\blank\,)$, where $A=K[G]$ is the affine algebra of~$G$.
Thus, for any $K$-algebra $R$ the element
$h\in G(R)$ corresponds to the $K$-algebra homomorphism $A\to R$, which will be denoted by~$\eps_h$. 
The generic element $g_G$ of the affine scheme~$G$ is an element of $G(A)$, corresponding to the identity
homomorphism $\mathrm{id}\colon A\to A$.
If $G'$ is another scheme over $K$, then the scheme morphism $\theta\colon G\to G'$ is uniquely defined
by the image of the generic element~$g_G$ in the set~$G'(A)$ or, what amounts to be the same, 
the $K$-algebra homomorphism $K[G']\to A$. In more detailes this point of view to affine schemes is
discussed in the work by Demazure--Gabriel~\cite{DemazureGabriel}, see also the book~\cite{Jantzen}.

\begin{lem}
Let $G$ and $G'$ affine group schemes over a domain~$K$ and let $E\le G$ be a group subfunctor.
Suppose that $G$ is smooth and connected and that $G(R)=E(R)$ for all local rings~$R$.
Let $\theta\colon E\to G'$ be a natural transformation of the restrictions of the functors~$E$ and~$G'$ on
the full subcategory of $K$-algebras that are domains.
Then there exists a unique scheme morphism $\widetilde\theta\colon G\to G'$ such that for any domain~$R$
the restriction of $\widetilde\theta_R\colon G(R)\to G'(R)$ to $E(R)$ coincides with $\theta_R$.
\end{lem}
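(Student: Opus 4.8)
The plan is to exploit the description of a scheme morphism by the image of its generic element, recalled just above. Set $A=K[G]$ and let $g_G\in G(A)$ be the generic element, so that $\eps_{g_G}=\mathrm{id}_A$. Because $G$ is smooth and connected over the domain $K$, the algebra $A$ is an integral domain; write $F=\mathrm{Frac}(A)$ for its fraction field and $\iota\colon A\hookrightarrow F$ for the inclusion. Giving a morphism $\wt\theta\colon G\to G'$ amounts to giving a $K$-algebra homomorphism $\beta\colon K[G']\to A$, namely $\beta=\eps_{\wt\theta_A(g_G)}$, and for any $K$-algebra $R$ one then has $\eps_{\wt\theta_R(h)}=\eps_h\circ\beta$ for every $h\in G(R)$. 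Thus the whole problem reduces to producing the correct $\beta$.

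The main obstacle is that we cannot simply set $\beta=\eps_{\theta_A(g_G)}$: the transformation $\theta$ is defined only on $E$, and since $A$ need not be local we have no reason to expect $g_G\in E(A)$. I would circumvent this by localizing. The fraction field $F$ is in particular a local ring, so $G(F)=E(F)$; hence the image $g_F\in G(F)$ of $g_G$ (which satisfies $\eps_{g_F}=\iota$) lies in $E(F)$, and as $F$ is a domain the element $\theta_F(g_F)\in G'(F)$ is defined. Let $\alpha=\eps_{\theta_F(g_F)}\colon K[G']\to F$ be the corresponding homomorphism. The key step is to show that $\alpha$ takes values in $A$. For each prime ideal $\mf p$ of $A$ the localization $A_{\mf p}$ is again a local domain, so $G(A_{\mf p})=E(A_{\mf p})$ and the image $g_{\mf p}$ of $g_G$ in $G(A_{\mf p})$ lies in $E(A_{\mf p})$; thus $\theta_{A_{\mf p}}(g_{\mf p})\in G'(A_{\mf p})$ is defined, with associated homomorphism $\gamma_{\mf p}\colon K[G']\to A_{\mf p}$. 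Applying naturality of $\theta$ to the inclusion $A_{\mf p}\hookrightarrow F$ (a map of domains) identifies $\alpha$ with $\gamma_{\mf p}$ followed by $A_{\mf p}\hookrightarrow F$, so $\alpha(K[G'])\subseteq A_{\mf p}$ for every $\mf p$. Since $A=\bigcap_{\mf p}A_{\mf p}$ inside $F$ for any integral domain, it follows that $\alpha(K[G'])\subseteq A$; let $\beta$ be the resulting corestriction and $\wt\theta$ the morphism it defines.

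It then remains to check that $\wt\theta$ restricts to $\theta$ on $E$ over domains, and that it is unique. For the compatibility, let $R$ be a domain and $g\in E(R)$. Embedding $R$ into $L=\mathrm{Frac}(R)$ and using that $G'(R)\to G'(L)$ is injective (as $R\hookrightarrow L$), together with naturality of both $\theta$ and $\wt\theta$ along $R\to L$, one reduces to the case where $R=L$ is a field, so $E(L)=G(L)$. For $g\in G(L)$ the homomorphism $\eps_g\colon A\to L$ has kernel a prime $\mf p$ and, since nonzero elements of the field $L$ are invertible, factors as $A\to A_{\mf p}\to L$, the second arrow being some $\bar\eps\colon A_{\mf p}\to L$. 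Consequently $g$ is the image under $\bar\eps$ of $g_{\mf p}\in E(A_{\mf p})$, and naturality of $\theta$ along $\bar\eps$ gives $\eps_{\theta_L(g)}=\bar\eps\circ\gamma_{\mf p}$. As $\gamma_{\mf p}$ is by construction $\beta$ followed by $A\to A_{\mf p}$, this equals $\eps_g\circ\beta=\eps_{\wt\theta_L(g)}$, proving $\theta_L(g)=\wt\theta_L(g)$. For uniqueness, any admissible $\wt\theta'$ must send $g_F\in E(F)$ to $\theta_F(g_F)$, whence $\iota\circ\beta'=\alpha=\iota\circ\beta$ and therefore $\beta'=\beta$ because $\iota$ is injective. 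The only genuinely geometric input in this argument is that $A=K[G]$ is a domain, which is precisely what the smoothness and connectedness of $G$ guarantee.
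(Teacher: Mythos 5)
Your proof is correct and follows essentially the same route as the paper's: pass to the fraction field $F$ of the domain $A=K[G]$, where $E(F)=G(F)$ makes $\theta_F(g_F)$ available, show the corresponding homomorphism $K[G']\to F$ lands in $A=\bigcap_{\mf p}A_{\mf p}$ by localizing at primes and using naturality, define $\wt\theta$ by this generic image, and verify compatibility by factoring $\eps_h$ through $A_{\mf p}$ in the field case. Your treatment is, if anything, slightly more explicit than the paper's at two points it leaves implicit --- the injectivity of $G'(R)\to G'(\mathrm{Frac}(R))$ in the reduction to fields, and the uniqueness via $g_F\in E(F)$ --- but these are elaborations, not deviations.
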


\begin{proof}
Let $A=K[G]$ be the affine algebra of the scheme~$G$. Since $G$ is smooth and connected and $K$ is a domain,
$A$ is a domain as well. Denote by $F$ its field of fractions.
Consider the restriction of the homomorphism $\theta_F\colon E(F)=G(F)\to G'(F)$ to the group $G(A)$.
We claim that the image of this restriction lies in~$G'(A)$.
Let $\mf p$ be a prime ideal of~$A$. Then
$$
\theta_F\bigl(G(A)\bigr)\le\theta_F\bigl(G(A_{\mf p})\bigr)=\theta_F\bigl(E(A_{\mf p})\bigr)=
\theta_{A_{\mf p}}\bigl(E(A_{\mf p})\bigr)\le G'(A_{\mf p}).
$$
Since $\bigcap\limits_{\mf p\in\opn{Spec}_A}A_{\mf p}= A$, the image of $G(A)$ under $\theta_F$
is contained in $G'(A)$. In particular, $g'=\theta_F(g_G)\in G'(A)$, where $g_G\in G(A)$ is the generic
element of the scheme~$G$. Now, for a $K$-algebra $R$ and an element $h\in G(R)$
put $$\widetilde\theta_R(h)=G'(\eps_h)(g').$$

As we have mentioned above, the element $g'$ defines the scheme morphism $G\to G'$ uniquely,
which immediately implies the uniqueness statement. It remains to prove that for any domain~$R$
the restriction $\widetilde\theta_R\colon G(R)\to G'(R)$ to $E(R)$ coincide with $\theta_R$.
Clearly, it suffices to give a proof for a field~$R$ (the fraction field of a domain).
Let $h\in G(R)$. The kernel of the homomorphism $\eps_h\colon A\to R$ is a prime ideal. Denote it by~$\mf p$.
Let $\eps$ be the homomorphism $A_{\mf p}\to R$ induced by $\eps_h$. Consider the diagram
\begin{equation*}
\begin{split}
\begin{CD}
\llap{$G(A_{\mf p})=\,$}E(A_{\mf p})@>{\theta_{A_{\mf p}}}>> G'(A_{\mf p})\\
@V{E(\eps)}VV                                               @VV{G'(\eps)}V\\
\llap{$G(R)=\,$}E(R)                @>{\theta_R}>>           G'(R)
\end{CD}
\end{split}
\end{equation*}
Since $\theta$ is a natural transformation, this diagram is commutative.
By definition of the homomorphism $\eps$ the image of the generic element $g_G$ under $\eps$ equals~$h$ 
(we identify elements of the groups $G(A)$ and $G'(A)$ with their canonical images in
$G(A_{\mf p})$ and $G'(A_{\mf p})$ respectively).

Therefore, the image of $g_G$ in $G'(R)$ equals $\theta_R(h)$. Choosing another path we see that
$g_G$ goes to $G'(\eps)(g')=G'(\eps_h)(g')=\widetilde\theta_R(h)$. Thus,
$\widetilde\theta_R(h)=\theta_R(h)$, which completes the proof.
\end{proof}

\begin{cor}\label{exceptional}
There exist morphisms of group schemes
$$
\begin{CD}
\GG_{\mathrm{sc}}(C_l,\,\blank\,)@>\widehat\ph >>\GG_{\mathrm{sc}}(B_l,\,\blank\,)@>\widehat\psi >>\GG_{\mathrm{sc}}(C_l,\,\blank\,)
\end{CD}
$$
over the field $\mathbb F_2$ defined on the root unipotent elements by formulas~\eqref{DefMorphisms}.
The composition of these morphisms in any order is equal to the Frobenius endomorphism.
\end{cor}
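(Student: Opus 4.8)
The plan is to deduce the corollary from the previous lemma, applied with $K=\mathbb F_2$, $G=\GG_{\mathrm{sc}}(C_l,\,\blank\,)$, $G'=\GG_{\mathrm{sc}}(B_l,\,\blank\,)$ and $E=\E_{\mathrm{sc}}(C_l,\,\blank\,)$ (and symmetrically for $\widehat\psi$). Three of the hypotheses are immediate: the Chevalley--Demazure scheme $\GG_{\mathrm{sc}}(C_l,\,\blank\,)$ is affine, smooth and connected over the field $\mathbb F_2$, the elementary subgroup is a group subfunctor, and $\GG_{\mathrm{sc}}(C_l,R)=\E_{\mathrm{sc}}(C_l,R)$ for every local ring $R$, as recalled in~\S\ref{sec:notation}. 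What remains to be supplied is a natural transformation $\theta\colon\E_{\mathrm{sc}}(C_l,\,\blank\,)\to\GG_{\mathrm{sc}}(B_l,\,\blank\,)$ on the subcategory of domains that is given on root elements by~\eqref{DefMorphisms}. The homomorphism $\bar\ph$ constructed above does this, but only over fields, so the first task is to promote it to all domains.

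First I would extend $\bar\ph$ from fields to domains by restriction along the fraction field. For a domain $R$ with fraction field $L$ the inclusion $R\hookrightarrow L$ induces an embedding $\E_{\mathrm{sc}}(C_l,R)\hookrightarrow\GG_{\mathrm{sc}}(C_l,L)=\E_{\mathrm{sc}}(C_l,L)$, since $\GG_{\mathrm{sc}}$ is a closed subscheme of a matrix scheme. Define $\theta_R$ as the restriction of $\bar\ph_L$ to $\E_{\mathrm{sc}}(C_l,R)$. This is automatically a group homomorphism, and its image is generated by the elements $\bar\ph_L(x_\alpha(r))$ with $r\in R$, which are root elements of $\E_{\mathrm{sc}}(B_l,R)$ by~\eqref{DefMorphisms}; hence $\theta_R$ lands in $\GG_{\mathrm{sc}}(B_l,R)$. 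Note that this construction uses the descent of $\ph$ to the elementary group \emph{only over fields}, so no statement about $K_2$ over general domains is needed.

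The hard part will be the naturality of $\theta$, because a homomorphism $f\colon R\to R'$ of domains need not extend to the fraction fields through which $\theta_R$ and $\theta_{R'}$ were defined. I would circumvent this by checking naturality on the generators $x_\alpha(r)$, $r\in R$. On such an element both composites of the naturality square are computed by the explicit formulas~\eqref{DefMorphisms}, which are polynomial in the argument (for instance $x_\alpha(r)\mapsto x_\alpha(r^2)$ on short roots, and $f(r^2)=f(r)^2$); thus both paths send $x_\alpha(r)$ to the same root element evaluated at $f(r)$. Since $\E_{\mathrm{sc}}(C_l,R)$ is generated by these elements and all arrows are homomorphisms, the square commutes, and $\theta$ is a natural transformation on domains. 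The previous lemma now yields a unique scheme morphism $\widehat\ph\colon\GG_{\mathrm{sc}}(C_l,\,\blank\,)\to\GG_{\mathrm{sc}}(B_l,\,\blank\,)$ restricting to $\theta$, hence given by~\eqref{DefMorphisms} on root elements; the morphism $\widehat\psi$ is obtained in the same way from $\bar\psi$.

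It remains to identify the two composites with the Frobenius endomorphism. I would verify on root elements that $\widehat\psi\circ\widehat\ph$ sends $c_\alpha(r)$ to $c_\alpha(r^2)$ for every $\alpha\in C_l$: for a long root $2e_i$ one has $c_{2e_i}(r)\mapsto b_{e_i}(r)\mapsto c_{2e_i}(r^2)$, and for a short root $e_i-e_j$ one has $c_{e_i-e_j}(r)\mapsto b_{e_i-e_j}(r^2)\mapsto c_{e_i-e_j}(r^2)$, using that $e_i-e_j$ is long in $B_l$ while $e_i$ is short. A symmetric computation shows $\widehat\ph\circ\widehat\psi$ sends $b_\alpha(r)$ to $b_\alpha(r^2)$. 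Thus on the elementary subgroup over every domain both composites agree with the Frobenius endomorphism, which is itself a scheme morphism over $\mathbb F_2$ restricting to $x_\alpha(r)\mapsto x_\alpha(r^2)$. Invoking the uniqueness clause of the previous lemma, applied to the natural transformation $x_\alpha(r)\mapsto x_\alpha(r^2)$, identifies each composite with the Frobenius, completing the proof.
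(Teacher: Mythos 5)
Your proposal is correct and takes essentially the same route as the paper: the corollary is deduced from the immediately preceding lemma applied to the natural transformation coming from $\bar\ph$ and $\bar\psi$, and you simply make explicit the details the paper leaves implicit (extending $\bar\ph$ from fields to domains through the fraction field, checking naturality on the root generators $x_\alpha(r)$, and identifying the composites with the Frobenius endomorphism via the uniqueness clause of the lemma together with the computation $c_\alpha(r)\mapsto c_\alpha(r^2)$, $b_\alpha(r)\mapsto b_\alpha(r^2)$ on generators). No gaps; nothing further is needed.
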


When this text has been already written A.~V.~Smolenski in his work~\cite{SmolSuzRee}
has given explicit formulas for these morphisms.

Over a perfect field these homomorphisms are isomorphisms and over a reduced rings they are injective.
However, as group scheme morphisms they are epimorphisms but not monomorphisms. Indeed, the kernel of
both of them is a direct product of several copies of the scheme~$\mu_2$, whereas the image is dense.

\section{Distribution of subgroups}\label{sec:main}
In this section we prove the sandwich classification theorem for the lattices
$L\bigl(\St(\Phi,K),\St(\Phi,R)\bigr)$ and $L\bigl(\E(\Phi,K),\GG(\Phi,R)\bigr)$,
where $K\subseteq R$ are $\mathbb F_2$-algebras, $\Phi=B_l$ or $C_l$, and $\GG$ is not necessarily
simply connected.

\begin{thm}\label{main}
Let $K\subseteq R$ be $\mathbb F_2$-algebras, $\Phi=B_l$ or $C_l$, and $l\ge3$.
Then given a subgroup $H$ of $\GG(\Phi,R)$, containing $\E(\Phi,K)$,
there exists a unique admissible pair~$\Lambda$ if type~$\Phi$ such that
$$
\E(\Phi,\Lambda)\le H\le N_{\GG(\Phi,R)}\bigl(\E(\Phi,\Lambda)\bigr).
$$

Similarly, given a subgroup $H\le\St(C_l,R)$, containing $\St(C_l,K)$,
there exists a uique admissible pair $\Lambda$ such that
$$
\St(C_l,\Lambda)\le H\le N_{\St(C_l,R)}\bigl(\St(C_l,\Lambda)\bigr).
$$
\end{thm}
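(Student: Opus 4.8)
The plan is to reduce the theorem to the already-established case of the simply connected symplectic group $\Sp_{2l}=\GG_{\mathrm{sc}}(C_l,\,\blank\,)$ over $\mathbb F_2$-algebras, which is the content of~\cite{BakStepSymp}, and then to propagate the result in three directions: to the Steinberg group $\St(C_l,R)$ above it, to the group $\GG(C_l,R)$ for arbitrary weight lattices below it, and finally across the exceptional morphism to the groups of type $B_l$. I would set $D=\E(\Phi,K)$ (respectively $\St(\Phi,K)$) throughout and exploit the general machinery of \S\ref{sandwich}. Let me describe the steps in order.

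First I would fix the base case. By~\cite{BakStepSymp} the lattice $L\bigl(\E_{\mathrm{sc}}(C_l,K),\GG_{\mathrm{sc}}(C_l,R)\bigr)$ satisfies the sandwich classification theorem with $\E(C_l,K)$-full subgroups being exactly the elementary carpet subgroups $\E_{\mathrm{sc}}(C_l,\Lambda)$ attached to admissible pairs $\Lambda$. To pass to the Steinberg group, I would apply Lemmas~\ref{central0} and~\ref{central} to the central extension $\pi\colon\St(C_l,R)\twoheadrightarrow\E_{\mathrm{sc}}(C_l,R)$, whose kernel $K_2(C_l,R)$ is central (here one uses that $\St(C_l,R)$ is centrally closed for $l\ge3$, per the discussion in~\S\ref{sec:notation} citing~\cite{LavSpCent}). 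By Lemma~\ref{central} the sandwich classification theorems for the two lattices are equivalent and $\pi$ gives a bijection of full subgroups; this identifies the $\St(C_l,K)$-full subgroups as $\St(C_l,\Lambda)$, establishing the second displayed inclusion in the theorem.

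Next I would extend from the elementary subgroup to an arbitrary Chevalley group $\GG(C_l,R)$ of any weight lattice. Here Lemma~\ref{extendSC} and its Corollary~\ref{cor:extendSC} are the right tools: taking $E=\E(C_l,R)\triangleleft\GG(C_l,R)$, I must verify the two hypotheses, namely that $D$ is generated by finitely generated perfect subgroups (the root subgroups over the finitely generated subrings of $K$ furnish these) and that $N_E(F)/F$ is quasi-solvable for every $D$-full $F$. The quasi-solvability is the point requiring care: for $F=\E(C_l,\Lambda)$ one needs a structural description of the normalizer $N_{\E(C_l,R)}\bigl(\E(C_l,\Lambda)\bigr)$ showing that modulo $F$ it is built from a torus and diagonal-type pieces, hence abelian-by-abelian or an ascending union of solvable groups. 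Assuming this, Corollary~\ref{cor:extendSC} delivers the sandwich classification for $L\bigl(\E(C_l,K),\ovl G\bigr)$ for any group scheme $\ovl G$ of type $C_l$ between the simply connected and adjoint forms, with full subgroups the images of $\E_{\mathrm{sc}}(C_l,\Lambda)$.

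Finally I would transport everything to type $B_l$ using the exceptional morphisms $\wh\ph,\wh\psi$ of Corollary~\ref{exceptional}, whose composition is the Frobenius $x\mapsto x^2$. The idea is that $\wh\ph$ carries $\E(C_l,R)$ onto the carpet subgroup of the pair $(R,R^2)$ inside $\E(B_l,R)$, and matches admissible pairs of type $C_l$ with those of type $B_l$ under the correspondence $\Lambda_s\leftrightarrow\Lambda_l$ dictated by formulas~\eqref{DefMorphisms}; since Frobenius is injective on reduced rings and its effect is absorbed into the admissible-pair conditions AP1--AP2, one recovers a bijection on the lattices. Applying Lemma~\ref{surj} to the epimorphism $\wh\ph$ (and the dual argument via $\wh\psi$) then yields the sandwich classification theorem for $L\bigl(\E(B_l,K),\GG(B_l,R)\bigr)$. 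I expect the \emph{main obstacle} to be precisely the quasi-solvability of $N_E(F)/F$ in the carpet case, since unlike the subring case the normalizer of a carpet subgroup is genuinely larger than a torus times the subgroup and its structure over a non-reduced ring must be analyzed carefully; a secondary subtlety is checking that the exceptional morphism respects full subgroups over non-reduced rings, where it is not injective.
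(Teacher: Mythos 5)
Your skeleton matches the paper's proof in outline (base case from~\cite{BakStepSymp}; Steinberg case via the central-extension Lemma~\ref{central}, using centrality of $K_2(C_l,R)$ from~\cite{LavSpCent} and perfectness from~\cite{SteinChevalley}; extension via Lemma~\ref{extendSC} and Corollary~\ref{cor:extendSC}), but your type-$B_l$ step contains a genuine gap. You propose to apply Lemma~\ref{surj} to $\wh\ph$, yet --- as you yourself observe --- $\wh\ph_R$ maps $\E_{\mathrm{sc}}(C_l,R)$ only onto the carpet subgroup of the pair $(R,R^2)$, a proper subgroup of $\E_{\mathrm{sc}}(B_l,R)$ whenever $R^2\ne R$. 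So there is no group epimorphism to feed into Lemma~\ref{surj}, and your fallback claim that the Frobenius twist is ``absorbed into AP1--AP2, recovering a bijection on the lattices'' fails precisely over non-perfect and non-reduced rings, i.e.\ in all cases the theorem is about. The paper's missing idea is to enlarge the ring rather than analyze the defect of $\wh\ph$: let $S$ be the $R$-algebra generated by symbols $t_r$, $r\in R$, with $t_r^2=r$. Since squaring is a ring homomorphism in characteristic~2, $S^2=R$, so $\Lambda=(S,R)$ is an admissible pair of type $C_l$ in $S$, and by formulas~\eqref{DefMorphisms} the map $\wh\ph_S$ carries the carpet subgroup $\E_{\mathrm{sc}}(C_l,\Lambda)$ \emph{onto} $\E_{\mathrm{sc}}(B_l,R)$, with $\E_{\mathrm{sc}}(C_l,K)$ going onto $\E_{\mathrm{sc}}(B_l,K)$ because $K^2=K$. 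This produces an honest epimorphism defined on a subgroup $E'=\E_{\mathrm{sc}}(C_l,\Lambda)$ of $E=\E_{\mathrm{sc}}(C_l,S)$ --- exactly the configuration Corollary~\ref{cor:extendSC} was tailored for --- and, composed with the canonical surjections $\E_{\mathrm{sc}}(\Phi,R)\to\E_P(\Phi,R)$, it covers all weight lattices at once; your ordering (first all weight lattices of type $C_l$, then transport to $B_l$) would additionally require exceptional morphisms for non-simply-connected forms, which are not constructed in the paper.

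Your declared ``main obstacle'' --- quasi-solvability of $N_E(F)/F$ --- is not actually open, and the paper's first move shows why: one reduces immediately to $K=\mathbb F_2$ (the statement for arbitrary $K\subseteq R$ follows from this case). After that, both hypotheses of Lemma~\ref{extendSC} need only be verified for $\opn{ESp}_{2l}(\mathbb F_2)\le\opn{ESp}_{2l}(Q)$ with $Q=R$ or $S$: condition~(1) is trivial because $\opn{ESp}_{2l}(\mathbb F_2)$ is finite and perfect, and condition~(2) is exactly Theorem~2 of~\cite{BakStepSymp}; both properties are inherited by epimorphic images inside Corollary~\ref{cor:extendSC}, so no structural analysis of normalizers of carpet subgroups in the image groups (over non-reduced rings or otherwise) is ever needed. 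Relatedly, your verification of condition~(1) via ``root subgroups over finitely generated subrings'' is wrong as stated --- root subgroups are abelian, hence never perfect; you presumably meant elementary subgroups over finitely generated subrings, but the reduction to $\mathbb F_2$ makes the point moot.
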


\begin{proof}
The statement of the theorem for an arbitrary subring~$K$ of~$R$ follows from the statement for $K=\mathbb F_2$.
Therefore, without loss of generality we may assume that $K=\mathbb F_2$.

In case of the simply connected Chevalley group $\Sp_{2l}$ with the root system~$C_l$ the result is obtained in the 
paper~\cite{BakStepSymp}.
Clearly, it implies the stadard description of the lattice 
$L\bigl(\E_{\mathrm{sc}}(C_l,K),\E_{\mathrm{sc}}(C_l,R)\bigr)$.
By Lavrenov's theorem~\cite{LavSpCent} the kernel of the canonical homomorphism
$\St(C_l,R)\to\E_{\mathrm{sc}}(C_l,R)$ lies in the center. By Corollary~4.4 from~\cite{SteinChevalley}
the groups $\St(C_l,K)$ and $\E_{\mathrm{sc}}(C_l,K)$ are perfect. Therefore, Lemma~\ref{central}
implies the second assertion.

Let $S$ be an $R$-algebra generated by symbols $t_r$ over all $r\!\in\! R$ subject to relations $t_r^2\!=\!r$.
Since $2\!=\!0$ in $R$, squaring is a ring homomorphism, hence, $S^2\!\!=\!R$, i.\,e.
$\Lambda\!=\!(S,R)$ is an admissible pair in $S$ of type $C_l$.
The standard description of the lattice $L\bigl(\E_{\mathrm{sc}}(C_l,K),\E_{\mathrm{sc}}(C_l,S)\bigr)$ implies
the standard description of the lattice $L\bigl(\E_{\mathrm{sc}}(C_l,K),\E_{\mathrm{sc}}(C_l,\Lambda)\bigr)$.
The function $\ph$ maps the generators of the group $\E_{\mathrm{sc}}(C_l,\Lambda)$ onto the generators
of the group $\E_{\mathrm{sc}}(B_l,R)$. Therefore the function~$\widehat\ph_S$ from Corollary~\ref{exceptional}
maps $\E_{\mathrm{sc}}(C_l,\Lambda)$ onto $\E_{\mathrm{sc}}(B_l,R)$. Since $K^2=K$, the image of the group
$\E_{\mathrm{sc}}(C_l,K)$ equals $\E_{\mathrm{sc}}(B_l,K)$.
For any ring~$R$, any root system~$\Phi$ and any weight lattice~$P$,
the canonical map from $\E_{\mathrm{sc}}(\Phi,R)$ to $\E_P(\Phi,R)$ is surjective.
Therefore, there exists an epimorphism onto $\E_P(C_l,R)$ or $\E_P(B_l,R)$ from a certain
subgroup of the group $\opn{ESp}_{2l}(Q)=\E_{\mathrm{sc}}(C_l,Q)$, containing
$\opn{ESp}_{2l}(K)=\E_{\mathrm{sc}}(C_l,K)$ (where $Q=R$ or $Q=S$).

By G.~Taddei's theorem~\cite{Taddei} $\E_P(\Phi,R)$ is normal in $\GG_P(\Phi,R)$,
Hence, by corollary~\ref{cor:extendSC} for the proof of the theorem it suffices to verify conditions~(1) and~(2) 
of Lemma~\ref{extendSC} for the groups $\opn{ESp}_{2l}(K)\le\opn{ESp}_{2l}(Q)$.
The group $\opn{ESp}_{2l}(K)$ is finite and perfect as we assume that $K=\mathbb F_2$. 
On the other hand, the property~\ref{quasi-soluble} follows immediately from Theorem~2 of~\cite{BakStepSymp}, 
which completes the proof.
\end{proof}

\section{Preliminary results on carpet subgroups over a field}\label{sec:carpets}
In this section we present known statements on carpet subgroups of a field that we shall use in a sequel.
The following lemma appears first in~\cite{Suzuki}. It is a particular case of Theorem~3 from~\cite{LevchukABA}. 
In a sequel by default $F$ denotes an arbitrary field and all computations are performed in a Chevalley group
$\GG(\Phi,F)$.

\begin{lem}\label{lem1}
Suppose that a subgroup $M\le U(F)$ is normalized by $T(K)$ for a subfield $K\subseteq F$ such that $|K|>4$.
If
$
x_{\alpha _1}(t_1)\dots x_{\alpha _k}(t_k)\in M,
$
where
$
\alpha _1<\cdots <\alpha _k\in\Phi^+,
$
then each factor
$
x_{\alpha _i}(t_i), \ i=1,\dots ,k,
$
lies in~$M$.
\end{lem}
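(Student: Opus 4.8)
The plan is to exploit the action of the torus $T(K)$ on $M$ by conjugation to separate the factors by their ``$T(K)$-weights,'' which are the roots $\alpha_i$. First I would recall how a torus element acts on a root element: for $h\in T(K)$ and $\alpha\in\Phi$ we have $h\,x_\alpha(t)\,h^{-1}=x_\alpha(\chi_\alpha(h)\,t)$ for a character $\chi_\alpha\colon T(K)\to K^*$, and since $M$ is normalized by $T(K)$, for each $h$ the conjugate element
\[
h\,x_{\alpha_1}(t_1)\dots x_{\alpha_k}(t_k)\,h^{-1}
=x_{\alpha_1}(\chi_{\alpha_1}(h)t_1)\dots x_{\alpha_k}(\chi_{\alpha_k}(h)t_k)
\]
again lies in $M$. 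The strategy is to produce enough such conjugates so that a suitable product (or ratio) of them, using the additivity relation \eqref{add} within each fixed root subgroup, isolates a single factor $x_{\alpha_i}(t_i)$ and shows it belongs to $M$.

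The key technical point is that the characters $\chi_{\alpha_1},\dots,\chi_{\alpha_k}$ attached to the distinct positive roots $\alpha_1,\dots,\alpha_k$ are distinct, hence $K$-linearly independent as functions on $T(K)$ (this is where the hypothesis $|K|>4$ enters, ensuring the torus is large enough that Artin-style independence of the distinct multiplicative characters holds and that one can find torus elements realizing prescribed weight patterns). Once independence is available, I would fix an index $i$ and choose torus elements $h_1,\dots,h_m\in T(K)$ together with coefficients $c_1,\dots,c_m\in\mathbb F_p$ so that the combined conjugation action annihilates every $\chi_{\alpha_j}$-weight for $j\neq i$ while leaving the $\alpha_i$-weight nonzero; multiplying the corresponding conjugates of the original element together and collecting terms in each root subgroup via \eqref{add} then leaves only a nontrivial power of $x_{\alpha_i}$, forcing $x_{\alpha_i}(t_i')\in M$ for some $t_i'$, and a final rescaling recovers $x_{\alpha_i}(t_i)$ itself.

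An important structural subtlety is that a product of conjugates of the given element is not literally a single ordered monomial in the $x_{\alpha_j}$, because the root subgroups do not commute; reordering the factors introduces correction terms from the Chevalley commutator formula \eqref{ChCommForm} supported on higher roots $i\alpha_p+j\alpha_q$. I would handle this by inducting on the height (or on the ordered position within $\Phi^+$): one first extracts the factor corresponding to the \emph{last} root $\alpha_k$, where no commutator corrections can interfere since $\alpha_k$ is maximal in the given chain, conclude $x_{\alpha_k}(t_k)\in M$, and then multiply the original element by $x_{\alpha_k}(t_k)^{-1}\in M$ to remove it and apply the same argument to the shorter product $x_{\alpha_1}(t_1)\dots x_{\alpha_{k-1}}(t_{k-1})\in M$.

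The main obstacle I expect is the bookkeeping of commutator corrections described above together with the precise use of $|K|>4$: one must verify that the torus, over the specific field $K$, genuinely separates the finitely many roots occurring and that the weight-annihilation step can be carried out over $\mathbb F_p$ without the characteristic collapsing the linear system. I would therefore organize the proof as a downward induction on the position of the top factor, reducing at each stage to the single-character-extraction lemma for a torus acting on one root subgroup, where the condition $|K|>4$ guarantees a nonzero element $t\in K$ with the required nonvanishing of $\chi_{\alpha}(h)-1$ for an appropriate $h$.
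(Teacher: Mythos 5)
Your overall strategy (torus conjugation plus weight separation) is the right family of ideas, but note first that the paper itself does not prove this lemma at all: it quotes it from Suzuki's paper and from Theorem~3 of Levchuk's work \cite{Suzuki,LevchukABA}. Measured against the classical argument, your proposal has a genuine error in the induction step. You claim that the factor at the \emph{maximal} root $\alpha_k$ can be extracted first because ``no commutator corrections can interfere since $\alpha_k$ is maximal in the given chain.'' This is false: when you multiply several torus conjugates of the element and reorder into canonical form, the corrections from the Chevalley commutator formula land on root subgroups $X_{i\alpha_p+j\alpha_q}$, and such a sum can perfectly well \emph{equal} $\alpha_k$ (e.g.\ $\alpha_1=\gamma$, $\alpha_2=\delta$, $\alpha_3=\gamma+\delta$ in $B_2$: reordering produces a term $x_{\gamma+\delta}(C\lambda'\mu\, t_1t_2)$ that pollutes the top coordinate). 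The coordinate that is immune to corrections is the \emph{minimal} root $\alpha_1$, since any sum of positive roots from the chain is strictly greater than $\alpha_1$ in a height-compatible order; the classical extraction therefore works from the bottom of the chain, not the top, and your induction as stated collapses at the very first step.

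The second gap is the ``weight-annihilation'' mechanism itself. Raising group elements to powers only multiplies root parameters by integers, so your coefficients $c_m$ are forced to lie in $\mathbb F_p$, while Artin's independence of characters gives you solvability of the system $\sum_m c_m\chi_{\alpha_j}(h_m)=0$ ($j\ne i$), $\sum_m c_m\chi_{\alpha_i}(h_m)\ne 0$ only with coefficients in the big field $F$. These are very different: in the relevant characteristic $p=2$ with $c_m\in\mathbb F_2$, relations among the weights can make the system unsolvable (for instance, if one weight is the square of another, then $\sum_m\chi(h_m)=0$ forces $\sum_m\chi(h_m)^2=\bigl(\sum_m\chi(h_m)\bigr)^2=0$). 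You flag this worry yourself but give no argument, and it is precisely the heart of the matter. Finally, even granting annihilation, you would only obtain $x_{\alpha_i}(\mu t_i)\in M$ for some $\mu\in F^*$ assembled from character values, and the concluding ``rescaling'' by $T(K)$ multiplies the parameter only by elements of $\chi_{\alpha_i}\bigl(T(K)\bigr)$, which need not contain $\mu^{-1}$; the lemma asserts $x_{\alpha_i}(t_i)\in M$ on the nose, so this step is also missing. The cited proofs avoid all three problems by peeling factors from the side of the minimal root and using multiplicative (Vandermonde-type) combinations of conjugates, with $|K|>4$ guaranteeing $t\in K^*$ with $t^m\ne1$ for the small exponents $m$ that occur as pairings $\langle\beta,\alpha\rangle$.
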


The next statement follows from the definition of a carpet subgroup and Lemma~\ref{lem1}.

\begin{lem}\label{lem2}
Suppose that a subgroup $M\le U(F)$ is normalized by $T(K)$ for a subfield $K\subseteq F$ such that $|K|>4$.
Then the subgroup of $M$ generated by the intersections
$$
M\cap x_\alpha (F)=x_\alpha (\AAA_\alpha ),\ \alpha \in \Phi,
$$
is a carpet subgroup defined by a closed carpet $\AAA=\{\AAA_\alpha \mid \alpha \in \Phi\}$.
\end{lem}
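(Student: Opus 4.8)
The plan is to read off the additive subgroups $\AAA_\alpha$ from the intersections, to verify the carpet condition \eqref{carpet} by a single commutator computation that invokes Lemma~\ref{lem1}, and to deduce closedness from the inclusion $\E(\Phi,\AAA)\subseteq M$.

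First I would note that for each $\alpha\in\Phi$ the set $M\cap x_\alpha(F)$ is a subgroup of $x_\alpha(F)\cong(F,+)$, so it has the form $x_\alpha(\AAA_\alpha)$ for a uniquely determined additive subgroup $\AAA_\alpha\subseteq F$; moreover $\AAA_\alpha=0$ whenever $\alpha\in\Phi^-$, since $M\le U(F)$ meets the opposite root subgroups trivially. By construction the subgroup of $M$ generated by all these intersections is exactly $\langle x_\alpha(\AAA_\alpha)\mid\alpha\in\Phi\rangle$.

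The heart of the argument is checking that $\AAA$ is a carpet, i.e. that \eqref{carpet} holds. Fix $\alpha\ne\pm\beta$ in $\Phi^+$ (when $\alpha$ or $\beta$ is negative the left-hand side of \eqref{carpet} is $0$ and nothing is to be proved) and take $a\in\AAA_\alpha$, $b\in\AAA_\beta$. Then $x_\alpha(a),x_\beta(b)\in M$, whence $[x_\alpha(a),x_\beta(b)]\in M$. By the Chevalley commutator formula \eqref{ChCommForm} this commutator equals a product of the root elements $x_{i\alpha+j\beta}(C_{ij,\alpha\beta}a^ib^j)$ over the pairwise distinct positive roots $i\alpha+j\beta$. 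Writing this product with its factors in increasing order of the roots and applying Lemma~\ref{lem1} (which is available since $M$ is normalized by $T(K)$ with $|K|>4$), I conclude that each factor $x_{i\alpha+j\beta}(C_{ij,\alpha\beta}a^ib^j)$ lies in $M$, i.e. $C_{ij,\alpha\beta}a^ib^j\in\AAA_{i\alpha+j\beta}$. As $a,b$ range over $\AAA_\alpha,\AAA_\beta$ this is precisely the carpet condition \eqref{carpet}. This step is the one I expect to require the most care: I must make sure the factors of the commutator formula are arranged consistently with the ordering convention used in \eqref{ChCommForm} before Lemma~\ref{lem1} applies, which, if necessary, I would justify by peeling the factors off in order of increasing height.

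Finally, once $\AAA$ is known to be a carpet, its carpet subgroup $\E(\Phi,\AAA)=\langle x_\alpha(\AAA_\alpha)\mid\alpha\in\Phi\rangle$ coincides with the subgroup of $M$ generated by the intersections, and each generator $x_\alpha(\AAA_\alpha)=M\cap x_\alpha(F)$ lies in $M$, so $\E(\Phi,\AAA)\subseteq M$. Closedness is then immediate, since for every $\alpha$
$$
x_\alpha(\AAA_\alpha)\subseteq\E(\Phi,\AAA)\cap x_\alpha(F)\subseteq M\cap x_\alpha(F)=x_\alpha(\AAA_\alpha),
$$
forcing $\E(\Phi,\AAA)\cap x_\alpha(F)=x_\alpha(\AAA_\alpha)$ for all $\alpha$, as required.
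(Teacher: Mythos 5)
Your proposal is correct and matches the paper's intent exactly: the paper dispenses with a written proof, remarking only that the lemma ``follows from the definition of a carpet subgroup and Lemma~\ref{lem1}'', and your argument is precisely the natural expansion of that remark --- read off the additive subgroups $\AAA_\alpha$, apply Lemma~\ref{lem1} to the Chevalley commutator $[x_\alpha(a),x_\beta(b)]\in M$ to extract the carpet condition, and get closedness from $\E(\Phi,\AAA)\subseteq M$. Your attention to the ordering of the commutator factors before invoking Lemma~\ref{lem1} is the right point to be careful about, and it is handled correctly since the factors run over distinct roots and can be taken in height-increasing order.
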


It is well-known that the special linear group of degree two $SL_2(F)$ over a field $F$ is generated by 
the elementary transvections
$$
t_{12}(u)=\begin{pmatrix}1&u\\0&1\end{pmatrix},\ t_{21}(u)=\begin{pmatrix}1&0\\u&1\end{pmatrix},\ u\in F,
$$
and there exists a homomorphism $\varphi$ of $SL_2(F)$ onto the subgroup $\langle X_\alpha ,X_{-\alpha }\rangle$, $\alpha \in\Phi$
that extends the map $t_{12}(u)\to x_\alpha (u),\ t_{21}(u)\to x_{-\alpha }(u)$.

\begin{lem}[{\cite[Lemma~1]{Nuzhin}}]\label{lem3}
Let $r$ be an element of a field $F$ that is algebraic over a subfield $K\subseteq F$ and does not belong to $K$. 
Put $M=\langle t_{21}(K),\ t_{12}(rK)\rangle$. Then one of the following holds{\rm:}
\begin{enumerate}
\item
$|K|=2$ and $M$ is the dihedral group{\rm;}
\item
$|K|=3$, $r^2=-1$, and the image of $M$ in $\opn{PSL}_2(K)$ is isomorphic to $A_5${\rm;}
\item
$M\cap t_{21}(F)\ne t_{21}(K)$.
\end{enumerate}
\end{lem}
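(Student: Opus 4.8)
The plan is to transport the problem into $\SL_2(F)$ by means of the homomorphism $\varphi$ and to study $M$ through its action on $\mathbb P^1(F)$, in which $t_{21}(K)$ is a group of unipotent elements with fixed point $0$ and $t_{12}(rK)$ a group of unipotent elements with fixed point $\infty$. Set $L=\{s\in F\mid t_{21}(s)\in M\}$; this is an additive subgroup of $F$ with $K\subseteq L$, and $M\cap t_{21}(F)=t_{21}(L)$, so assertion~(3) is exactly the inequality $L\ne K$. Hence it suffices to assume $L=K$ and to deduce that we are in case~(1) or~(2). The single computation I would rely on is that of traces: for $a,b\in K$
\[
\opn{tr}\bigl(t_{12}(ra)\,t_{21}(b)\bigr)=2+rab,
\]
so the trace field of $M$ contains all the elements $rab$, hence contains $r$ and $K$, i.e. it contains $K(r)$. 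Since $r$ is algebraic over $K$ and $r\notin K$, the field $K(r)$ is a nontrivial finite extension of $K$; in particular the trace field of $M$ strictly contains $K$.

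Because $M$ contains two nontrivial unipotent elements with distinct centres $0$ and $\infty$, its image $\overline M\le\opn{PSL}_2(F)$ fixes no point of $\mathbb P^1(\overline F)$ and so is not contained in a Borel subgroup. I would then invoke the classification of such subgroups (Dickson): $\overline M$ is either conjugate to $\opn{PSL}_2(F_0)$ or $\opn{PGL}_2(F_0)$ for a subfield $F_0\subseteq\overline F$, or it is dihedral (inside the normalizer of a torus), or it is one of $A_4$, $S_4$, $A_5$. In the subfield case the subgroup of unipotent elements of $\overline M$ fixing $0$ is a full one-parameter group, so that $L$ is (after the conjugation) the additive group of $F_0$; then $L=K$ would force $F_0=K$, contradicting the fact that the trace field, which is $F_0$, contains $K(r)\supsetneq K$. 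Thus under $L=K$ the group $\overline M$ must be dihedral or one of $A_4,S_4,A_5$; all of these are finite, whence $K$ and $M$ are finite as well.

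It remains to run through the finite exceptional groups under the standing assumptions that $L=K$ and that the trace field equals $K(r)$, which properly contains $K$. Here $L$ is the (elementary abelian) group of unipotent elements of $\overline M$ fixing $0$: for a dihedral group this forces $\chr F=2$ and $|K|=2$ (two transvections generating $M$), giving case~(1); the groups $A_4$ and $S_4$, together with $A_5\cong\opn{PSL}_2(\mathbb F_4)\cong\opn{PSL}_2(\mathbb F_5)$ in characteristics $2$ and $5$, are either of subfield type or produce a trace field that cannot properly contain $K$ while leaving $L=K$, so they are excluded exactly as above; and the one surviving configuration is $A_5$ in characteristic $3$ (an exceptional subgroup of $\opn{PSL}_2(\mathbb F_9)$, not of subfield type), where the unipotent subgroup fixing $0$ is a Sylow $3$-subgroup of order $3$, forcing $|K|=3$ and $K(r)=\mathbb F_9=\mathbb F_3(\sqrt{-1})$, i.e. $r^2=-1$. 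This is case~(2).

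The hard part is this last paragraph: the structural input (trace field $\supseteq K(r)\supsetneq K$, together with non-Borelness) only narrows $\overline M$ to a short list, and one must then examine each exceptional group to see precisely when the unipotent subgroup fixing $0$ can remain as small as $t_{21}(K)$. The decisive point is the contrast between subfield-type images, where a large trace field forces a large unipotent subgroup and hence $L\supsetneq K$, and the genuinely exceptional dihedral and $A_5$ images, where the unipotent subgroups stay small; pinning down that only $|K|=2$ and $|K|=3,\ r^2=-1$ survive is the crux of the proof. Algebraicity of $r$ enters twice: it makes $K(r)/K$ a nontrivial finite extension (so that the trace field really grows beyond $K$), and it guarantees that $M$ is finite in the exceptional cases, reducing their analysis to a finite check.
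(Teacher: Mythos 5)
Your reduction to the case $L=K$, and the trace computation $\opn{tr}\bigl(t_{12}(ra)t_{21}(b)\bigr)=2+rab$ showing that the trace field contains $K(r)\supsetneq K$, are both correct, and for \emph{finite} $K$ your case analysis is essentially the route the paper itself indicates: the paper does not reprove this lemma but cites \cite[Lemma~1]{Nuzhin}, remarking that for finite $K$ of characteristic $p>2$ it is a special case of Dickson's theorem, and for $p=2$ of Levchuk's theorem \cite{LevchukDickson}. (Note also that for finite $K$ the finiteness of $M$ is automatic, since $M\le\SL_2\bigl(K(r)\bigr)$ and $K(r)$ is finite; algebraicity of $r$ gives it to you for free, not only ``in the exceptional cases''.)

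The genuine gap is the sentence ``I would then invoke the classification of such subgroups (Dickson)''. Dickson's theorem classifies \emph{finite} subgroups of $\opn{PSL}_2$; no trichotomy of the form ``subfield type, or dihedral, or $A_4$, $S_4$, $A_5$'' holds for arbitrary non-Borel subgroups of $\opn{PSL}_2(F)$ over an infinite field --- already $\opn{PSL}_2(\Z)\le\opn{PSL}_2(\mathbb{Q})$ violates it, and your group $M$ is not known to be finite at the point where you invoke the classification. Since the lemma allows infinite $K$, your argument breaks down in exactly the case this paper needs: $K$ an infinite nonperfect field of characteristic $2$ and $r$ inseparable over $K$. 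In that situation the claim that the image of $M$ is of subfield type is not a consequence of any classification theorem; when $r$ is separable (or $\chr F\ne2$) it is Bashkirov's theorem (Lemma~4 of the paper), which is far deeper than Dickson, and in the inseparable case $M$ is typically \emph{not} $\SL_2$ of a subfield but a proper carpet-type subgroup --- this is the very phenomenon the paper is about --- so the dichotomy ``subfield type forces $L\supsetneq K$, otherwise finite'' is unavailable. The original proof in \cite{Nuzhin} avoids all of this: assuming $M\cap t_{21}(F)=t_{21}(K)$, it manufactures a new lower unitriangular element of $M$ by direct matrix computation, using the minimal polynomial of $r$ over $K$ (the four-factor product manipulation in the paper's proof of Theorem~\ref{Bruhat}, case $B_2=C_2$, is a computation of exactly this kind), and that argument works uniformly for infinite $K$. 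As it stands, your proposal establishes the lemma only for finite $K$; the infinite case, which is the one of interest here, is missing.
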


Let $K$ be a finite field of characteristic $p$. For $p>2$ Lemma~\ref{lem3} is a particular case of well-known 
theorem of L.~Dickson (see~\cite{DicksonBook} or \cite[теорема 2.8.4]{GorFG}). For $p=2$ it is a particular case
of the main theorem of~\cite{LevchukDickson} by V.~M.~Levchuk, which describes up to equality
all periodic subgroups of $SL_2(F)$ over an arbitrary field $F$, 
that have nontrivial intersections with subgroups of upper and lower unitriangular matrices.
If the field $K$ is infinite, E.~L.~Bashkirov obtained the following improvement of Lemma~\ref{lem3}.

\begin{lem}[see~{\cite{Bash96}}]\label{lem4}
Let $r$ be an element of a field $F$ that is algebraic over a subfield $K\subseteq F$ and does not belong to $K$. 
Suppose that if $\opn{char}F=2$, then $r$ is separable over~$K$. Then
$\langle t_{21}(K),\ t_{12}(rK)\rangle=SL_2\bigl(K(r)\bigr)$.
\end{lem}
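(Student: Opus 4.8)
The inclusion $M\subseteq SL_2\bigl(K(r)\bigr)$ is immediate, since the generators $t_{21}(K)$ and $t_{12}(rK)$ are matrices over $K(r)$. Writing $L=K(r)$, the plan is to obtain the reverse inclusion by studying the additive subgroups
$$
A=\{a\in L\mid t_{12}(a)\in M\},\qquad B=\{b\in L\mid t_{21}(b)\in M\}
$$
and proving that $A=B=L$. This suffices because $SL_2(L)$ is generated by the transvection subgroups $t_{12}(L)$ and $t_{21}(L)$. By construction $rK\subseteq A$ and $K\subseteq B$, so both subgroups are nonzero; the whole task is to enlarge them to all of $L$.

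Two internal moves are available. First, a diagonal element $h(u)=\opn{diag}(u,u^{-1})\in M$ conjugates $t_{12}(a)$ to $t_{12}(u^2a)$ and $t_{21}(b)$ to $t_{21}(u^{-2}b)$, so every torus element that I can place in $M$ closes $A$ and $B$ under multiplication by $u^2$. Second, a monomial element $w(u)=t_{12}(u)t_{21}(-u^{-1})t_{12}(u)\in M$ interchanges $A$ and $B$ up to sign. The difficulty is that the given seed is unbalanced — the parameters $rK$ sit upstairs while $K$ sits downstairs — so no $w(u)$ and no $h(u)$ can be written down directly. The first technical step, therefore, is to manufacture a single balanced pair $t_{12}(u),t_{21}(-u^{-1})\in M$. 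Here I would invoke Lemma~\ref{lem3}: since $K$ is infinite, $|K|>4$ and we are in its third alternative, so $B\supsetneq K$, and a commutator computation with the seed transvections upgrades this to a genuine monomial element $w\in M$. Conjugating the seeds by $w$ then deposits $rK$ into $B$ and $K$ into $A$, so that $K+rK\subseteq A\cap B$ and in particular $1\in A\cap B$. When $[K(r):K]=2$ this already completes the proof, as $L=K+Kr$.

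For higher degree the remaining point is to climb through the powers of $r$: one must show that $A$ is closed under multiplication by $r$, whence $A\supseteq K[r]=K(r)=L$ because $r$ is algebraic and so $K[r]$ is a field. I expect this to be the main obstacle. The rank-one root combinatorics only produce square-scaling $a\mapsto u^2a$ through the torus, never the linear scaling $a\mapsto ra$ that is actually needed; multiplication by $r$ must instead be extracted either by treating $M$ as an irreducible linear group over $K$ and appealing to a classification of groups generated by transvections, or by an explicit computation that writes $ra$ as a $K$-combination of torus conjugates of elements already known to lie in $A$, using the minimal polynomial of $r$. This is precisely where the separability hypothesis in characteristic $2$ enters: for separable $r$ one has $K(r)=K(r^2)$, so the square-scaling does reach every power of $r$ and the construction exhausts $L$, whereas for inseparable $r$ one has $r\notin K(r^2)$, the climb stalls inside a proper subspace, and the resulting subgroup is a proper carpet subgroup of the kind governed by the admissible pairs studied in the rest of the paper. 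Granting the multiplication-by-$r$ closure, $A=B=L$ follows, and hence $M=SL_2(L)$.
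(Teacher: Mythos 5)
First, a point of comparison: the paper does not prove Lemma~\ref{lem4} at all --- it is quoted from Bashkirov~\cite{Bash96}, and the surrounding text notes only that a modified proof (for $\chr K\ne2$, or $\chr K=2$ with $K$ perfect) appears in Zalesski's appendix to~\cite{Timmesfeld}. So your attempt must stand as a self-contained argument, and as such it has two genuine gaps, both located exactly where the real work of Bashkirov's theorem lives. The first is the manufacture of the monomial element $w$. The third alternative of Lemma~\ref{lem3} gives only some $b\in B\sm K$ with $t_{21}(b)\in M$; it does not produce a balanced pair $t_{12}(u),t_{21}(-u^{-1})\in M$, and no ``commutator computation with the seed transvections'' does so in one stroke: a commutator of an upper and a lower transvection in $SL_2$ is generically a matrix with all four entries nonzero, and arranging the inverse relation between the two parameters is precisely the bootstrapping problem, not a routine step. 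Moreover, even granting some $w=w(u)\in M$, conjugation by it scales parameters by $u^{\mp2}$, so you obtain $u^2K\subseteq A$ and $u^{-2}rK\subseteq B$, not $K\subseteq A$ and $rK\subseteq B$; your conclusions $1\in A\cap B$ and ``the degree-two case is finished'' do not follow without controlling $u$.

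The second gap is the one you flag yourself: closure of $A$ under multiplication by $r$. Here you only name two strategies without executing either, and the remarks attached to them do not close the hole. The torus move $a\mapsto u^2a$ is available only for those $u$ with $h(u)\in M$, and showing that this set of weights is large is itself part of the induction, not a given. In characteristic $\ne2$ one can convert square-scaling plus additivity into ring multiplication via $2uv=(u+v)^2-u^2-v^2$, but in characteristic $2$ this identity is vacuous; and while your observation that $K(r)=K(r^2)$ for separable $r$ is correct (since $x^2-r^2=(x-r)^2$ makes $r$ purely inseparable, hence already rational, over $K(r^2)$), it concerns the target field and says nothing about which squares actually occur as available torus parameters in $M$. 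This is exactly why Lemma~\ref{lem4} is a full research paper~\cite{Bash96} rather than a lemma with a short proof: your outline reproduces the standard opening moves (the additive subgroups $A$ and $B$, Lemma~\ref{lem3} as seed, the torus and Weyl symmetries) and correctly identifies where inseparability makes the statement fail --- the stalled climb is indeed the carpet/admissible-pair phenomenon of \S\ref{examples} --- but the two steps that constitute the actual theorem are asserted, not proved.
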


A modification of the proof of Lemma~\ref{lem4} for the case where ${\chr K\!\ne\!2}$, or $\chr K\!=\!2$ and $K$ 
is a perfect field is presented in appendix by A.~E.~Zalesski to the paper of F.~G.~Timmesfeld~\cite{Timmesfeld}.

Let us say that roots $\alpha ,\beta\in\Phi$ \emph{commute}, if $\alpha +\beta\notin\Phi$.
The following statement is a particular case of Lemma~2 from~\cite{Nuzhin}.

\begin{lem}\label{lem5}
Let $\Delta$ be a nonempty subset of~$\Phi^+$. Then there exist roots $\alpha \in\Phi^+$ and $\beta\in \Delta$ such that
$\alpha $ commutes with all roots from~$\Delta$ and the inner product $(\alpha ,\beta)$ is nonzero.
\end{lem}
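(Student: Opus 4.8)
The plan is to prove the statement by induction on the rank of $\Phi$ (the dimension of the linear span of the roots), allowing reducible root systems in the inductive hypothesis. The base case is $A_1$, where $\Phi^+=\{\gamma\}$ forces $\Delta=\{\gamma\}$, and one takes $\alpha=\beta=\gamma$: indeed $2\gamma\notin\Phi$, so $\alpha$ commutes with $\Delta$, and $(\gamma,\gamma)\ne0$.

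For the inductive step I would first dispose of the reducible case. If $\Phi=\Phi_1\sqcup\dots\sqcup\Phi_m$ decomposes into $m\ge2$ pairwise orthogonal irreducible components, then each component has strictly smaller rank; I pick a component $\Phi_i$ with $\Delta_i:=\Delta\cap\Phi_i\ne\varnothing$ and apply the inductive hypothesis to the pair $(\Phi_i,\Delta_i)$ to obtain $\alpha\in\Phi_i^+$ and $\beta\in\Delta_i$ with $\alpha$ commuting with every root of $\Delta_i$ and $(\alpha,\beta)\ne0$. Since a sum of two roots lying in distinct orthogonal components is never a root, $\alpha$ automatically commutes with all roots of $\Delta$ outside $\Phi_i$; hence $\alpha$ commutes with all of $\Delta$ and the pair $(\alpha,\beta)$ works.

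The heart of the argument is the irreducible case. Here I would use the highest root $\tilde\alpha$ of $\Phi$: for any $\gamma\in\Phi^+\supseteq\Delta$ the root sum $\tilde\alpha+\gamma$ has height strictly greater than the maximal height $\opn{ht}(\tilde\alpha)$, so $\tilde\alpha+\gamma\notin\Phi$, i.e.\ $\tilde\alpha$ commutes with all of $\Delta$. If $(\tilde\alpha,\beta)\ne0$ for some $\beta\in\Delta$ we are finished with $\alpha=\tilde\alpha$. Otherwise $\Delta$ is contained in $\Phi':=\{\gamma\in\Phi:(\gamma,\tilde\alpha)=0\}$, the subsystem of roots orthogonal to $\tilde\alpha$. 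This $\Phi'$ is a (possibly reducible) root system spanning $\tilde\alpha^\perp$, hence of strictly smaller rank, so the inductive hypothesis (with the positivity on $\Phi'$ restricted from $\Phi$) supplies $\alpha\in(\Phi')^+$ and $\beta\in\Delta$ with $\alpha$ commuting with $\Delta$ inside $\Phi'$ and $(\alpha,\beta)\ne0$.

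The step requiring care — and what I regard as the main obstacle — is checking that the commuting relation transfers between $\Phi'$ and the ambient $\Phi$: I must verify that for $\alpha,\gamma\in\Phi'$ the condition $\alpha+\gamma\notin\Phi'$ is equivalent to $\alpha+\gamma\notin\Phi$. This holds because $\alpha$ and $\gamma$ are both orthogonal to $\tilde\alpha$, hence so is $\alpha+\gamma$; therefore if $\alpha+\gamma$ is a root of $\Phi$ at all it already lies in $\Phi'$. Granting this equivalence, the $\alpha$ produced by the inductive hypothesis commutes with $\Delta$ in $\Phi$ as well, and since $\alpha\in(\Phi')^+\subseteq\Phi^+$ and $(\alpha,\beta)\ne0$, the induction closes. (Alternatively, one may simply invoke Lemma~2 of~\cite{Nuzhin}, of which the present statement is a special case.)
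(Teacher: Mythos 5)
Your proof is correct and matches the paper's approach: the paper does not actually reprove this lemma (it cites Lemma~2 of~\cite{Nuzhin}), but its proofs of the analogous Lemmas~\ref{lem6} and~\ref{lem7} run exactly your induction on rank --- the maximal-height root commutes with all of $\Phi^+$, and if $\Delta$ is orthogonal to it one passes to the orthogonal subsystem and handles orthogonal sums componentwise. One cosmetic remark: $\Phi'$ need not span $\tilde\alpha^\perp$ (e.g.\ in type $A_l$ the orthogonal subsystem is $A_{l-2}$), but your argument only uses the strict drop in rank, so nothing breaks.
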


Lemmas~\ref{lem1}--\ref{lem3} and~\ref{lem5} from the works~\cite{Nuzhin,Nuzhin13,NuzhinSkew} of the first author
were key steps in the description of intermediate subgroups between groups of Lie type over different fields
in the case where the bigger field is an algebraic extension of the smaller one.
In a sequel we shall use two special analogs of Lemma~\ref{lem5} for types~$B_l$ and~$C_l$.

\begin{lem}\label{lem6}
Let $\Delta$ be a nonempty subset of~$\Phi^+$, where $\Phi=B_l$, $(l\ge 2)$. 
Then there exists a \emph{long} root $\alpha \in\Phi^+$ and a root $\beta\in \Delta$ such that $\alpha $ 
commutes with all roots from~$\Delta$ and $(\alpha ,\beta)\neq 0$.
\end{lem}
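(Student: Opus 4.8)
The plan is to argue combinatorially in the standard coordinate realization, working with an orthonormal basis $e_1,\dots,e_l$ in which the short roots are $\pm e_i$ and the long roots are $\pm e_i\pm e_j$ with $i\neq j$. The basic observation is that a vector $\mu$ lies in $\Phi=B_l$ if and only if all its coordinates lie in $\{-1,0,1\}$ and at most two of them are nonzero; consequently a long root $\alpha$ commutes with a root $\gamma$ (that is, $\alpha+\gamma\notin\Phi$) as soon as $\alpha+\gamma$ either has a coordinate equal to $\pm2$ or has at least three nonzero coordinates. I will also use that every positive root has positive leading (lowest-index) coefficient. Finally, the highest root $e_1+e_2$ is long and commutes with \emph{every} positive root, since adding any positive root to it produces either a coordinate equal to $2$ or a third nonzero coordinate. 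Set $S=\bigcup_{\gamma\in\Delta}\operatorname{supp}\gamma$ and $i_0=\min S$.

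First I would treat the case in which some coordinate is \emph{free}, i.e. there is an index $j\notin S$. Then I take $\alpha=e_{i_0}+e_j$, a long positive root. Since $\gamma_j=0$ for every $\gamma\in\Delta$, and $i_0=\min S$ forces $\gamma_{i_0}\in\{0,1\}$, commutation can be checked directly: if $i_0\in\operatorname{supp}\gamma$ then $\alpha+\gamma$ has $i_0$-coordinate equal to $2$, while if $i_0\notin\operatorname{supp}\gamma$ then $\alpha+\gamma$ carries the two extra nonzero coordinates $i_0,j$ on top of the nonempty support of $\gamma$, hence has at least three nonzero coordinates; either way $\alpha+\gamma\notin\Phi$. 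Choosing $\beta\in\Delta$ with $i_0\in\operatorname{supp}\beta$, which exists because $i_0\in S$, gives $(\alpha,\beta)=1\neq0$, as required.

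The remaining, and genuinely delicate, case is $S=\{1,\dots,l\}$, when no free coordinate is available. Here commutation becomes automatic if I use the highest root $e_1+e_2$, so the entire difficulty is to secure \emph{non}-orthogonality, since a priori $e_1+e_2$ may be orthogonal to all of $\Delta$; I expect this degenerate configuration to be the main obstacle. To resolve it I would analyse when $(e_1+e_2,\gamma)=\gamma_1+\gamma_2=0$ holds for all $\gamma\in\Delta$: using positivity of leading coefficients, this forces every $\gamma\in\Delta$ to be either $e_1-e_2$ or supported in $\{3,\dots,l\}$, and since index $1$ must occur in $S$ the root $e_1-e_2$ necessarily lies in $\Delta$. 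In this situation I switch to $\alpha=e_1-e_2$, which still commutes with every such $\gamma$ (adding it either doubles a coordinate or creates at least three nonzero coordinates), while $(e_1-e_2,e_1-e_2)=2\neq0$ furnishes the needed non-orthogonality with $\beta=e_1-e_2$. In the complementary subcase, where $e_1+e_2$ is non-orthogonal to some $\beta\in\Delta$, the highest root itself already does the job. Assembling these cases completes the proof; the only step requiring care is the coordinate bookkeeping in the degenerate subcase just described.
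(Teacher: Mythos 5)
Your proof is correct, and it takes a genuinely different route from the paper's. The paper's argument is abstract and inductive: it takes $\alpha$ to be the root of maximal height (your $e_1+e_2$), notes that it is long and commutes with all of $\Phi^+$, and, in the degenerate case $\Delta\subseteq\Phi^\perp_\alpha\cap\Phi^+$, uses the decomposition of $\Phi^\perp_\alpha$ into an orthogonal sum of a rank-one system of long roots and a system of type $B_{l-2}$, finishing by induction on the rank. You share only the first move (the highest root in your full-support case) and otherwise replace the induction by a direct coordinate analysis: when some index $j$ is free you exhibit the explicit long root $e_{i_0}+e_j$ with $i_0=\min S$, and when $S=\{1,\dots,l\}$ you show that orthogonality of $\Delta$ to $e_1+e_2$ forces $e_1-e_2\in\Delta$ and every other element of $\Delta$ to be supported in $\{3,\dots,l\}$, whereupon $\alpha=\beta=e_1-e_2$ works. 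Your checks are all sound: positivity of the leading coefficient gives $\gamma_{i_0}=1$ whenever $i_0\in\operatorname{supp}\gamma$, and non-membership in $\Phi$ follows whenever a coordinate $\pm2$ or a third nonzero coordinate appears; taking $\alpha=\beta\in\Delta$ is legitimate since the lemma does not exclude it and $2\alpha\notin\Phi$ in a reduced system (the paper's own rank-one base case does exactly the same). What each approach buys: the paper's version is shorter and structurally uniform with its Lemmas~5 and~7, but its inductive step is delicate at the boundary --- when the relevant part of $\Delta$ lands in the $B_1$ summand (all of whose roots are short, and the lemma is stated only for $l\ge2$), the inductive hypothesis cannot literally be invoked and the required long root must be sought outside the summand; your free-coordinate case (e.g.\ $\alpha=e_1+e_3$ for $\Delta=\{e_3\}$ in $B_3$) handles exactly this configuration explicitly, so your non-inductive proof is fully self-contained and in fact tidies up this loose end.
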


\begin{proof}
The root $\alpha$ from $\Phi^+$ of maximal height is long and commutes with all roots from~$\Phi^+$.
If $\Delta$ contains a root $\beta$ not orthogonal to $\alpha$, then the statement is trivial.
Hence, it suffices to consider the case 
$\Delta\subseteq\Phi^\bot_\alpha \cap\Phi^+$, where $\Phi^\bot_\alpha =\{\gamma\in\Phi\mid (\gamma,\alpha )=0\}$.
For any long root $\alpha$ the set $\Phi^\bot_\alpha$ is an orthogonal sum of a root system of rank~1
consisting of long roots and a root system of type $B_{l-2}$. It is easy to see if the root $\alpha$ is a terminal vertex
in the Coxeter graph. For an orthogonal sum the lemma follows from the statement for irreducible summands.
For one dimensional set $\Delta$ the statement is obvious, therefore, the induction on the rank of $\Phi$
completes the proof.
\end{proof}

\begin{lem}\label{lem7}
Let $\Delta$ be a nonempty subset of~$\Phi^+$, where $\Phi=C_l$ $(l\ge 2)$. Then there exists a short root
$\alpha \in\Phi^+$ and a root $\beta\in\Delta$ such that the root subgroup $X_\alpha(F)$ of a Chevalley group 
$\GG(\Phi,F)$ over a field $F$ of characteristic~$2$ 
commutes with all root subgroups $X_\gamma(F)$, $\gamma\in \Delta$, and $(\alpha ,\beta)\neq 0$.
\end{lem}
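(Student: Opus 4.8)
The plan is to follow the proof of Lemma~\ref{lem6} for $B_l$, adapting it to the feature special to type $C_l$ in characteristic~$2$: here the conclusion concerns commuting of the \emph{root subgroups} $X_\alpha(F)$, which is strictly weaker than the combinatorial condition $\alpha+\gamma\notin\Phi$ that was used for $B_l$. Writing $C_l=\{2e_i,\ e_i-e_j\}$ with short roots $e_i\pm e_j$ and long roots $2e_i$, I would first record the relevant commuting criterion: by the characteristic-$2$ commutator relations recalled in~\S\ref{sec:exceptional}, for a short root $\alpha$ the subgroups $X_\alpha(F)$ and $X_\gamma(F)$ commute if and only if $\alpha+\gamma$ is \emph{not} a short root. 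In particular, two short roots whose sum is a long root are orthogonal and their root subgroups commute (for example $e_i-e_j$ and $e_i+e_j$, with $(e_i-e_j)+(e_i+e_j)=2e_i$ and structure constant $\pm2=0$); so, unlike in the $B_l$ argument, $\alpha+\gamma\notin\Phi$ is not the right notion here.

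Granting this, I would take as anchor the short root $\alpha_0=e_1+e_2$. Since for every $\gamma\in\Phi^+$ the sum $\alpha_0+\gamma$ is never a short root (for long $\gamma$ this is clear, and for short $\gamma$ the only near-miss $\gamma=e_1-e_2$ yields the long root $2e_1$), the subgroup $X_{\alpha_0}(F)$ commutes with every root subgroup $X_\gamma(F)$, $\gamma\in\Phi^+$. Hence if some $\beta\in\Delta$ is not orthogonal to $\alpha_0$, the pair $(\alpha_0,\beta)$ already proves the lemma.

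It remains to treat $\Delta\subseteq\alpha_0^\perp\cap\Phi^+$. A direct computation gives the orthogonal decomposition $\alpha_0^\perp\cap\Phi=\{\pm(e_1-e_2)\}\oplus C_{l-2}$, where the rank-one summand is the short system generated by $e_1-e_2$ and the $C_{l-2}$ summand lives on the coordinates $3,\dots,l$. As the two summands have disjoint coordinate support, a short root chosen in either of them automatically commutes with all root subgroups attached to the other. I would therefore split $\Delta$ along this decomposition and induct on $l$: if $e_1-e_2\in\Delta$ then $\alpha=e_1-e_2$ is short, commutes with all of $\Delta$, and is non-orthogonal to $\beta=e_1-e_2$; otherwise $\Delta$ is contained in the $C_{l-2}$ summand and the inductive hypothesis furnishes the desired short root there.

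The delicate point, and the place where type $C$ genuinely differs from type $B$, is the base of the induction, since a rank-one long subsystem $\{\pm2e_p\}$ contains no short root at all. Thus when the recursion collapses $\Delta$ onto a single long root $2e_p$ I would argue by hand: the short root $\alpha=e_q+e_p$ (any $q\ne p$, available because $l\ge2$) has $(\alpha,2e_p)=2\ne0$ and, since $\alpha+2e_p=e_q+3e_p$ is not a root, commutes with $X_{2e_p}(F)$. More generally a single root $\beta\in\Delta$ is disposed of directly, taking $\alpha=\beta$ when $\beta$ is short and $\alpha=e_q+e_p$ when $\beta=2e_p$ is long; this single-root case is the exact analogue of the ``$\Delta$ one-dimensional'' base case of Lemma~\ref{lem6}. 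Conceptually the whole statement mirrors Lemma~\ref{lem6} under the exceptional isogeny of~\S\ref{sec:exceptional}, which matches short roots of $C_l$ with long roots of $B_l$ and, in characteristic~$2$, the group-theoretic commuting used here with the combinatorial commuting used there; so Lemma~\ref{lem7} could alternatively be deduced from Lemma~\ref{lem6} by transport along this correspondence.
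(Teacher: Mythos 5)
Your argument is correct and essentially coincides with the paper's proof: the paper likewise anchors at the short root of maximal height, whose root subgroup commutes with all positive root subgroups in characteristic~$2$, and then runs the same orthogonal-decomposition-plus-rank-induction scheme as Lemma~\ref{lem5}, with ``long'' replaced by ``short''. The only difference is explicitness: the paper compresses everything after the anchor into ``the rest of the proof is the same as for Lemma~\ref{lem5}'', whereas you spell out the characteristic-$2$ commuting criterion, the decomposition $\alpha_0^\perp\cap\Phi=\{\pm(e_1-e_2)\}\oplus C_{l-2}$, and the rank-one long base case $\Delta=\{2e_p\}$ --- the one spot where the $C_l$ induction genuinely requires a short root chosen outside the subsystem, correctly handled by your $\alpha=e_q+e_p$.
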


\begin{proof}
Let $\alpha$ be the short root from~$\Phi^+$ of maximal height. Then the root subgroup $X_\alpha(F)$
of a Chevalley group $\GG(\Phi,F)$ over a field~$F$ characteristic~$2$ commutes with all root subgroups $X_\gamma(F)$,
$\gamma\in\Phi^+$. 
The rest of the proof is the same as for Lemma~\ref{lem5} with replacing the word \llq long\rrq\
by \llq short\rrq.
\end{proof}

\section{Intermediate subgroups between Chevalley groups of type $B_l$, $C_l$, $F_4$, or $G_2$}\label{sec:Nuzhin}
Theorems~3.1 and~4.1 from~\cite{Nuzhin13} imply the following result.

\begin{prop}\label{prop1}
Let $F$ be an algebraic extension of a field~$K$ of characteristic~$p$ and let $M$ be a group
lying between Chevalley groups $\GG(\Phi,K)$ and $\GG(\Phi,F)$ of type $\Phi=B_l$, $C_l$ $(l\ge 2)$, $F_4$, or $G_2$.
Let $p=2$ if $\Phi =B_l$, $C_l$, or $F_4$, and $p=3$ if $\Phi =G_2$. Then $M$ is a product of 
the carpet subgroup
$\E(\Phi,\AAA)$ and a diagonal subgroup $H_M$ normalizing $\E(\Phi,\AAA)$.
The carpet $\AAA=\{\AAA_\alpha \mid \alpha \in \Phi\}$ is closed and is defined by the equality
$$
\AAA_\alpha =
\begin{cases}
P,& \text{ if } \alpha  \text{ is short}, \\
Q,& \text{ if } \alpha  \text{ is long}.
\end{cases}
$$
for some additive subgroups $P$ and $Q$ of the field~$F$ such that
\begin{equation}\label{inc1}
K\subseteq P^p\subseteq Q\subseteq P\subseteq F.
\end{equation}
Moreover, depending on the type of the Chevalley group $\GG(\Phi,F)$ 
the following additional conditions for~$P$, $Q$ and $H_M$ hold.
\begin{itemize}
\item[(а)]
if $\Phi=B_l$ and $l\ge 3$, then $Q$ is a field{\rm;}
\item[(b)]
if $\Phi=C_l$ and $l\ge 3$, then $P$ is a field{\rm;}
\item[(c)]
if $\Phi=F_4$ or $G_2$, then both additive subgroups $P$ and~$Q$ are fields and $H_M$ is the unit subgroup.
\end{itemize}
\end{prop}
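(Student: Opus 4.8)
The plan is to obtain the statement as a specialization of Theorems~3.1 and~4.1 of~\cite{Nuzhin13}, which already describe the intermediate subgroups between $\GG(\Phi,K)$ and $\GG(\Phi,F)$ for a general, not yet length-homogeneous, closed carpet. Concretely, I would set $\AAA_\alpha=\{t\in F: x_\alpha(t)\in M\}$ for every $\alpha\in\Phi$ and $H_M=M\cap H(F)$, where $H$ is the diagonal subgroup, and verify that $\AAA=\{\AAA_\alpha\}$ is a closed carpet and that $M=\E(\Phi,\AAA)H_M$. For $|K|>4$ this is exactly the content of the cited theorems, and internally it runs as follows: one intersects $M$ with $U(F)$, uses that $T(K)\le M$ normalizes $M\cap U(F)$, and applies Lemma~\ref{lem1} and Lemma~\ref{lem2} to see that the root intersections split off and assemble into a closed carpet on $\Phi^+$. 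The opposite unipotent together with the Weyl representatives available in $N(K)\le M$ extends this to all of $\Phi$, and a Gauss--Bruhat decomposition argument shows that the non-diagonal part of any element of $M$ already lies in $\E(\Phi,\AAA)$. Since each $\AAA_\alpha=M\cap x_\alpha(F)$ is stable under conjugation by $H_M$, the diagonal part $H_M$ normalizes $\E(\Phi,\AAA)$.

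Next I would reduce the carpet to the two additive subgroups $P$ and $Q$. Because $N(K)\le M$ contains Weyl representatives and $W(\Phi)$ acts transitively on the roots of each fixed length, conjugation permutes the $\AAA_\alpha$ within a length class; hence $\AAA_\alpha=P$ for all short $\alpha$ and $\AAA_\alpha=Q$ for all long $\alpha$. The inclusion chain~\eqref{inc1} then comes out of the carpet axiom~\eqref{carpet}, the $K$-module structure on each $\AAA_\alpha$ induced by the scaling action of $T(K)$, and the containment $\GG(\Phi,K)\le M$ exactly as in~\cite{Nuzhin13}: the containments $K\subseteq\AAA_\alpha$ are immediate, while $Q\subseteq P$ and $P^p\subseteq Q$ are read off from the Chevalley commutator relations between a long and a short root, namely the relation carrying the structure constant $p$ in characteristic~$p$. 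I would keep this step purely formal and not track individual structure constants.

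The substantive part, and the step I expect to be the main obstacle, is the passage from \emph{additive subgroup} to \emph{subfield} in cases~(a)--(c). This is where the $\SL_2$ analysis enters. For a long root $\alpha$ in $B_l$ (respectively a short root in $C_l$) one localises inside the rank-one subgroup $\langle X_\alpha,X_{-\alpha}\rangle\cong\SL_2$, in which $M$ induces a subgroup of the shape $\langle t_{21}(K),t_{12}(rK)\rangle$; closedness of the carpet forbids extra root elements, so the trichotomy of Lemma~\ref{lem3} (and its refinement Lemma~\ref{lem4} over infinite fields) forces the relevant additive subgroup to be closed under multiplication and inversion, i.e. a subfield. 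The rank restriction $l\ge3$ is what lets one isolate a single length class: by Lemma~\ref{lem6} (for $B_l$) and Lemma~\ref{lem7} (for $C_l$) the terminal long, respectively short, root commutes with a whole chosen subset, which is precisely what makes the $\SL_2$ configuration available. For $F_4$ and $G_2$ the geometry is rigid enough that \emph{both} $P$ and $Q$ are forced to be fields and the diagonal part $H_M$ collapses to the identity.

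Finally I would dispose of the small-field cases $|K|\le4$ separately, since Lemma~\ref{lem1} requires $|K|>4$; here $\GG(\Phi,K)$ is a fixed finite group and one argues directly, the first two alternatives of Lemma~\ref{lem3} supplying the needed information. Assembling the closed carpet $\AAA$, the two subgroups $P,Q$ satisfying~\eqref{inc1}, the field conditions delivered by the $\SL_2$ step, and the decomposition $M=\E(\Phi,\AAA)H_M$ completes the proof.
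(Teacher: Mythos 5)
Your top-level route is the same as the paper's: Proposition~\ref{prop1} is given there with no internal proof at all, only the citation of Theorems~3.1 and~4.1 of \cite{Nuzhin13}, together with the surrounding remark that the proof for $B_l$ transfers to $C_l$ because the alleged isomorphism between the two types in characteristic~2 holds only over perfect fields --- a subtlety your length-symmetric treatment (Lemma~\ref{lem6} for $B_l$, Lemma~\ref{lem7} for $C_l$) implicitly takes care of. Your reconstruction of the internals is faithful to the cited source: $\AAA_\alpha=M\cap x_\alpha(F)$ and $H_M=M\cap T(F)$, Lemmas~\ref{lem1} and~\ref{lem2} to split off root factors and get a closed carpet, transitivity of the Weyl group on roots of fixed length to reduce to the pair $(P,Q)$, the commutator relations for the inclusions, and the $\SL_2$/Lemma~\ref{lem3} trichotomy for the field conditions (a)--(c). (One remark in your favour: what you derive, namely $K\subseteq\AAA_\alpha$ together with $P^p\subseteq Q\subseteq P$, is what the argument actually yields; the literal chain $K\subseteq P^p$ of~\eqref{inc1} cannot be extracted this way and fails already for $M=\GG(\Phi,K)$ with $K$ nonperfect, as well as for the pairs of Proposition~\ref{counterex1}. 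Also, your separate handling of $|K|\le4$ is moot in the regime the citation covers, since a nonperfect field is infinite.)

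The genuine gap is your sentence ``a Gauss--Bruhat decomposition argument shows that the non-diagonal part of any element of $M$ already lies in $\E(\Phi,\AAA)$''. This is exactly the step that the present paper singles out as defective in the literature: the Remark following Theorem~\ref{Bruhat} records that in the proof of \cite[Theorem~3.1]{Nuzhin13} the inclusion of the factors of the Bruhat product $uhvn\in M$ into~$M$ was claimed to go ``as in \cite{Nuzhin}'', whereas the argument of \cite{Nuzhin} works only when every $\AAA_\alpha$ is a subfield --- which at this stage of the proof is precisely what is not yet known, and which for $B_2=C_2$ is in general false by Proposition~\ref{counterex1}(3). Closing this hole is the content of \S\ref{sec:Bruhat}: one must use Lemmas~\ref{lem5}--\ref{lem7} to produce a root (long for $B_l$, short for $C_l$) whose root subgroup commutes with all the offending factors $a_{\alpha_i}$, $a_{\beta_i}$, then combine Lemma~\ref{lem1}, Lemma~\ref{lem3} and Lemma~\ref{lem8}, and in rank two carry out an explicit $\SL_2$ matrix computation, to contradict the existence of a factor $a_{\alpha_j}\notin x_{\alpha_j}(\AAA_{\alpha_j})$. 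Note also an order-of-quantifiers issue in your sketch: you invoke Lemma~\ref{lem3} with ``closedness of the carpet forbids extra root elements'' to force the field conditions, but closedness in the strong form you need (no extra root elements in $M$ conjugated into position by the non-carpet factors $y=a_{\alpha_k}\dots a_{\alpha_1}$) is entangled with the very factorization you have not yet established; this is why the paper runs the two together in the proof of Theorem~\ref{Bruhat} rather than deriving the factorization in one line. As a proof of Proposition~\ref{prop1} by citation your proposal is fine; as a self-contained argument it silently reproduces the gap of \cite{Nuzhin13} instead of filling it.
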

For groups of types~$B_l$ and~$C_l$, where $l\ge3$, this proposition improves Theorem~\ref{main}
of the current article in the case of fields.
The paper~\cite{Nuzhin13} asserts that over fields of characterstic~2 Chevalley groups of types~$B_l$ and~$C_l$ are
isomorphic and, therefore, it considers only type~$B_l$. But actually, as we have shown in section~\ref{sec:exceptional},
the exceptional morphism exists but is an isomorphism only over perfect fields of characteristic~2.
However, the proof for type~$B_l$ is valid also for type~$C_l$.
The only difference is that for~$B_l$ $(l\ge 3)$ the smallest additive subgroup~$Q$ is a field, whereas for~$C_l$ $(l\ge 3)$ 
the largest subgroup~$P$ is a field.

In section~\ref{examples} below we give a negative answer on the following question from~\cite[стр. 160]{Nuzhin13}.

\begin{problem}\label{problem1}
In case of $\Phi=B_l$, $C_l$, are both additive subgroups~$P$ and~$Q$ from Proposition~{\rm\ref{prop1}} fields\/{\rm?}
\end{problem}

Inclusions~\eqref{inc1} follows easily from carpet conditions~\eqref{carpet} applied for the family~$\AAA$ 
from proposition~\ref{prop1}.
However, the inverse implication holds only for types~$F_4$ and~$G_2$. Indeed, for $F_4$ in our situation there are
two nontrivial carpet conditions: $PQ\subseteq P$ and $P^2Q\subseteq Q$, which follows from~\eqref{inc1} 
because~$P$ and~$Q$ are fields. By the same reason all nontrivial carpet conditions $PQ\subseteq P$,
$P^2Q\subseteq P$, $P^3Q\subseteq Q$, $P^3Q^2\subseteq Q$, $QQ\subseteq Q$, and
$2PP\subseteq P$ for a family $\AAA$ of type~$G_2$ follows from~\eqref{inc1} as well. 
In section~\ref{examples} we give examples of pairs of additive subgroups in a nonperfect field
of characteristic~2 satisfying conditions~\eqref{inc1} that do not define carpets
of types~$B_l$ and~$C_l$. Namely, we give a negative answer to the following question.

\begin{problem}\label{problem2}
Let $F$ be an algebraic extension of a nonperfect field~$K$ of characteristic~$2$ and let $P$ and~$Q$ be
additive subgroups of~$F$. Suppose that both $P$ and $Q$ are $K$-modules and that one of them is a field. 
Is it true that the inclusions $PQ\subseteq P$ and $P^2Q\subseteq Q$ follow from the inclusions
$K\subseteq P^2\subseteq Q\subseteq P\subseteq F${\rm?}
\end{problem}

For an additive subgroup~$A$ of a field~$F$ define
$$
A^{-1}=\{0\}\cup \{r^{-1} \mid 0\ne r\in A\}.
$$
In a sequel we need the following statement, which is established in the proof of the main theorem of~\cite[p.~535]{Nuzhin}.
It follows from the fact that $F$ is algebraic over $K$ and the carpet conditions, e.\,g. for type~$B_2$
it follows from the inclusions $PQ\subseteq P$ and $P^2Q\subseteq Q$.

\begin{lem}[см.~\cite{Nuzhin}]\label{lem8}
Let $p$, $K$, $F$, $P$, $Q$ and $\Phi$ are the same as in Proposition~\ref{prop1}.
Then, given $r\in P\sm\{0\}$ $($respectively $r\in Q\sm\{0\})$
the ring $K[r]$ $($respectively $K^p[r])$ is contained in $P$ $($respectively in $Q)$. In particular, $P=P^{-1}$ and
$Q=Q^{-1}$.
\end{lem}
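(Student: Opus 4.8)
The plan is to reduce the whole statement to the two carpet conditions coming from a $B_2=C_2$ subsystem, namely $PQ\subseteq P$ and $P^2Q\subseteq Q$, together with the inclusions $K\subseteq P^2\subseteq Q\subseteq P$ from~\eqref{inc1}. For $\Phi=F_4$ or $G_2$ the subgroups $P$ and $Q$ are already fields containing $K$, respectively $K^p$, by Proposition~\ref{prop1}(c), so the assertion is immediate; hence I would assume $\Phi=B_l$ or $C_l$ and $p=2$, where these two relations genuinely hold since every such $\Phi$ with $l\ge2$ contains a $B_2$ subsystem.

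First I would record that $P$ is a $K$-module and $Q$ is a $K^2$-module. Indeed, $K\subseteq Q$ gives $KP\subseteq QP=PQ\subseteq P$, while $K\subseteq P$ gives $K^2Q\subseteq P^2Q\subseteq Q$; both $P$ and $Q$ are additive subgroups by definition. Since $K[r]=\sum_k Kr^k$ and $K^2[r]=\sum_k K^2r^k$, it then suffices to show that every power $r^k$ lies in the appropriate subgroup. The core is an induction on $k$ exploiting the parity of the exponent and the Frobenius map. For $r\in P$ I would prove simultaneously that $r^k\in P$ for all $k\ge0$ and that $r^k\in Q$ whenever $k$ is even: the base cases are $r^0=1\in K\subseteq Q$ and $r^1=r\in P$; for even $k=2m$ one writes $r^k=(r^m)^2\in P^2\subseteq Q$ using $r^m\in P$, and for odd $k=2m+1$ one writes $r^k=r\cdot r^{2m}\in PQ\subseteq P$ using $r^{2m}\in Q$. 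For $r\in Q\subseteq P$ the analogous induction shows $r^k\in Q$ for \emph{all} $k$: for even $k=2m$ again $r^k=(r^m)^2\in P^2\subseteq Q$, while for odd $k=2m+1\ge3$ one writes $r^k=(r^m)^2\cdot r\in P^2Q\subseteq Q$. Combining these with the module property yields $K[r]\subseteq P$ and $K^2[r]\subseteq Q$.

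Finally, the equalities $P=P^{-1}$ and $Q=Q^{-1}$ follow from the ring inclusions together with algebraicity. Since $F$ is algebraic over $K$, the element $r$ is algebraic over $K$, hence over $K^2$ as $K$ is purely inseparable over $K^2$; thus the commutative domains $K[r]$ and $K^2[r]$ are finite-dimensional over a field and are therefore fields. Consequently $r^{-1}\in K[r]\subseteq P$ for every $0\ne r\in P$, and $r^{-1}\in K^2[r]\subseteq Q$ for every $0\ne r\in Q$, which gives $P^{-1}\subseteq P$ and $Q^{-1}\subseteq Q$; applying the same to $r^{-1}$ in place of $r$ yields the reverse inclusions, whence equality.

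I expect the main obstacle to be bookkeeping rather than conceptual: one must verify that only the relations $PQ\subseteq P$, $P^2Q\subseteq Q$, and $P^2\subseteq Q$ are used, and that no step silently requires the unavailable condition $P\cdot P\subseteq P$ (which is exactly why the argument works for $p=2$ but forces $P$ and $Q$ to be fields in the $F_4$, $G_2$ cases). Getting the even/odd induction to close up correctly, and confirming that the $B_2$ carpet conditions survive inside the larger systems $B_l$ and $C_l$, is the delicate part.
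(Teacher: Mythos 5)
Your proof is correct and takes essentially the same route the paper indicates: the paper gives no detailed argument, merely citing~\cite{Nuzhin} and remarking that the lemma follows from algebraicity of $F$ over $K$ together with the carpet inclusions $PQ\subseteq P$ and $P^2Q\subseteq Q$, which is precisely what your even/odd induction on powers of $r$ and the field-theoretic conclusion make explicit. Your bookkeeping checks out: the $B_2$ conditions do survive inside $B_l$ and $C_l$, only $PQ\subseteq P$, $P^2Q\subseteq Q$, $K\subseteq P^2\subseteq Q\subseteq P$ are used, and $K$ being purely inseparable over $K^2$ in characteristic~$2$ makes $K^2[r]$ a field as needed.
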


The intermediate subgroups of Chevalley groups of skew types over nonperfect fields of exceptional characteristic
are described in~\cite{NuzhinSkew}.

\section{Examples concerning admissible pairs}\label{examples}
At the beginning of this section we construct counterexamples to Problems~\ref{problem1} and~\ref{problem2}.
For it suffices to define fields  $K\subseteq F$ in the following way.
Let $n$ be a positive integer greater than one and $x_1,\dots ,x_n$ independent commutative variables,
i.\,e. transcendental elements over the field of two elements~$\mathbb{F}_2$. Consider the field of rational functions
$F=\mathbb{F}_2(x_1,\dots ,x_n)$ and its subfield $K=\mathbb{F}_2(x_1^2,\dots ,x_n^2)$,
generated by the squares of the variables. Obviously, $F$ is an algebraic extension of~$K$ of degree~$2^n$ and
the set of monomials
$$
\bigl\{x_{i_1}\dots x_{i_m}\mid i_1<\cdots <i_m,\ \{i_1,\cdots ,i_m\}\subseteq \{1,\dots ,n\}\bigr\}
$$
is a basis of $F$ over $K$.

\begin{prop}\label{counterex1}
Let $\Phi=B_l$, $C_l$, $l\ge2$.
There exist fields $K\subseteq F$ of characteristic $2$ and an admissible pair
$(P,Q)$ of type~$\Phi$ such that $K\subseteq Q\subseteq P\subseteq F$ and:
\begin{enumerate}
\item
if $\Phi=B_l$, $l\ge 3$, then $P$ is not a field;
\item
if $\Phi=C_l$, $l\ge 3$, then $Q$ is not a field;
\item
if $\Phi=B_2=C_2$, then neither $P$, nor $Q$ is a fields.
\end{enumerate}
\end{prop}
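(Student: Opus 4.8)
The plan is to construct explicit additive subgroups $P$ and $Q$ inside the fields $K\subseteq F$ just defined, satisfying the admissible-pair conditions $2Q\subseteq Q\subseteq P$ (automatic in characteristic $2$), $P^2\subseteq Q$, and $t^2Q\subseteq Q$ for all $t\in P$, while arranging that the subgroup designated to be a field fails to be closed under multiplication. Since we work in characteristic $2$, the conditions AP1--AP4 reduce, for type $C_l$ ($l\ge3$), to: $P$ is a subring (hence a field by Lemma~\ref{lem8}), $P^2\subseteq Q\subseteq P$, and $P^2Q\subseteq Q$; for type $B_l$ it is the symmetric statement with the roles of $P$ and $Q$ interchanged. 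The strategy is therefore to exhibit one such pair and verify the carpet inclusions $PQ\subseteq P$ and $P^2Q\subseteq Q$ by a direct computation in the $K$-basis of monomials $x_{i_1}\cdots x_{i_m}$.

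First I would treat the $C_l$ case ($l\ge3$), where $P$ must be a field and $Q$ must fail to be one. The natural choice is $P=F$ itself, which is trivially a field with $P^2=K\subseteq Q$ as soon as $K\subseteq Q$. Then $Q$ must be an additive subgroup with $K\subseteq Q\subseteq F$, closed under multiplication by $P^2=K$ (so $Q$ is a $K$-module) and satisfying $F^2Q=KQ\subseteq Q$ (the same condition), but not closed under its own multiplication. Taking $Q$ to be the $K$-span of a carefully chosen subset of the monomial basis that does not contain all pairwise products — for instance $Q=K+Kx_1+\dots+Kx_n$, the $K$-span of $1$ and the degree-one monomials — gives a $K$-module containing $K$ with $Q\subsetneq F$, and $x_1x_2\in Q\cdot Q$ but $x_1x_2\notin Q$, so $Q$ is not a field; all required inclusions hold because $Q$ is by construction a $K$-module and $P=F$. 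The $B_l$ case is handled symmetrically: there $Q$ must be a field, so I would take $Q=K$ (a field, with $K=Q\supseteq P^2$ forcing $P^2\subseteq K$), and let $P$ be an additive $K^{?}$-module strictly between $K$ and $F$ that is not multiplicatively closed; here the constraint $P^2\subseteq Q=K$ means $P$ must consist of elements whose squares lie in $K$, which every element of $F$ satisfies since $F^2=K$, so $P$ can be \emph{any} additive subgroup with $K\subseteq P\subseteq F$ and $KP\subseteq P$, e.g. again the $K$-span of $1,x_1,\dots,x_n$, which is not a field.

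For the degenerate case $\Phi=B_2=C_2$, where neither $P$ nor $Q$ need be a subring, I would pick both $P$ and $Q$ to be proper non-multiplicatively-closed $K$-modules between $K$ and $F$, say $Q=K+Kx_1$ and $P=K+Kx_1+Kx_2$, and verify the two carpet inclusions $PQ\subseteq P$ and $P^2Q\subseteq Q$ directly; the point is simply that in rank $2$ there is no axiom forcing either piece to be a ring, so the weakest possible choices work.

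The main obstacle, and the step deserving the most care, is checking the carpet inclusion $PQ\subseteq P$ (and correspondingly $P^2Q\subseteq Q$) for the chosen pairs, since these are exactly the nontrivial conditions that do \emph{not} follow formally from the chain $K\subseteq P^2\subseteq Q\subseteq P\subseteq F$ — this is precisely the content of the negative answer to Problem~\ref{problem2}. In the $C_l$ construction with $P=F$, the inclusion $PQ=FQ\subseteq F=P$ is automatic, so there the verification collapses; the real work is confined to the $B_2=C_2$ case, where both inclusions must be checked by expanding products of basis monomials and confirming they land in the prescribed $K$-spans, using that $x_i^2\in K$. I expect this to be a short finite computation rather than a genuine difficulty, so the substance of the proposition is the \emph{existence} of the examples and the observation that $F^2=K$ makes the square-related axioms vacuous, which is what decouples the field property from the carpet conditions and yields the counterexamples to both Problem~\ref{problem1} and Problem~\ref{problem2}.
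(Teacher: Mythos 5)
Your constructions for $l\ge 3$ are correct and essentially parallel to the paper's: in characteristic $2$ with $F^2=K$ the conditions $P^2Q\subseteq Q$ and $P^2\subseteq Q$ are indeed nearly vacuous, and both your choices ($P=F$ with $Q=K+Kx_1+\dots+Kx_n$ for $C_l$; $Q=K$ with $P=K+Kx_1+\dots+Kx_n$ for $B_l$) satisfy all admissibility conditions, with non-fieldness witnessed directly by $x_1x_2\notin$ the span. The paper instead takes the $3$-dimensional module with basis $\{1,x_1,x_2\}$ and argues that a subfield intermediate between $K$ and $F$ must have $K$-dimension dividing $[F:K]=2^n$, so dimension $3$ rules out fieldness; your direct non-closure argument is equally valid and marginally more elementary.

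However, your rank-$2$ case contains a genuine error, and it is exactly the delicate case. Your pair $Q=K+Kx_1$, $P=K+Kx_1+Kx_2$ fails on both counts. First, $Q=K+Kx_1$ \emph{is} a field: since $x_1^2\in K$, one has $K+Kx_1=K[x_1]=K(x_1)$, a quadratic extension of $K$ — so item (3) of the proposition is violated. Second, the carpet inclusion $PQ\subseteq P$ fails: $x_1\in Q$, $x_2\in P$, but $x_1x_2\notin P$ because $x_1x_2$ is a basis monomial outside the spanning set. This very pair is used in the paper's Proposition~\ref{counterex2} precisely as the example showing that $PQ\subseteq P$ does \emph{not} follow from the chain $K\subseteq P^2\subseteq Q\subseteq P$; so the ``short finite computation'' you deferred is the step that breaks. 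The repair is not a small perturbation: once $Q$ properly contains $K$, the condition $PQ\subseteq P$ forces $P$ to be a module over the field $E=K(x_1,x_2)$ generated by $Q$, and one must simultaneously prevent $P$ from being a field. The paper achieves this with $n\ge4$, $Q$ the span of $\{1,x_1,x_2\}$ (dimension $3$, not a field), and $P=E+Ex_3+Ex_4$, i.e. the $12$-dimensional $K$-module with basis $\{1,x_1,x_2,x_3,x_4,x_1x_2,x_1x_3,x_1x_4,x_2x_3,x_2x_4,x_1x_2x_3,x_1x_2x_4\}$; then $PQ\subseteq EP=P$, $P^2\subseteq K$ gives $P^2Q\subseteq Q$, and neither $3$ nor $12$ divides $2^n$, so neither $P$ nor $Q$ is a field. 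Note that the smaller candidate $E+Ex_3$ would not do, since it equals the field $E(x_3)$ — which shows why two extra variables, not one, are needed.
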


\begin{proof}
{\bf Type $B_l$, $l\ge 3$}. Let $Q=K$ and let $P$ be the $K$-module with the basis $\{1,x_1,x_2\}$. 
The following properties can be verified immediately:
(1) $PQ\subseteq P$; (2) $P^2Q\subseteq Q$; (3) $\dim_KP=3$.
Since $3$ does not divide $2^n$, the additive subgroup $P$ is not a field.

\smallskip
{\bf Type $C_l$, $l\ge 3$}. Let $P=F$, and let $Q$ be a $K$-module with the basis $\{1,x_1,x_2\}$. Properties (1)-(3) 
from the previous part of the proof hold also in this case, except that in the third property one substitutes~$P$ 
by~$Q$. The rest of the proof repeated one for type~$B_l$.

{\bf Type $B_2=C_2$}. Suppose in this case that $n\ge 4$. Define the additive subgroups~$P$ and~$Q$ of the field $F$ 
in the following way.
Let $P$ be a $K$-module with the basis
$$
\{1,\,x_1,\,x_2,\,x_3,\,x_4,\,x_1x_2,\,x_1x_3,\,x_1x_4,\,x_2x_3,\,x_2x_4,\,x_1x_2x_3,\,x_1x_2x_4\},
$$
and let $Q$ be a $K$-module with the basis $\{1,x_1,x_2\}$. 
Again, the following properties can be verified immediately:
(1) $PQ\subseteq P$;
(2) $P^2Q\subseteq Q$;
(3) $\dim_KP=12$, $\dim_KQ=3$.
Since $3$ and $12$ do not divide $2^n$, both $P$ and $Q$ are not fields.
\end{proof}

The carpet, corresponding to the admissible pair from the previous proposition gives
the negative answer to Problem~\ref{problem1}.
The following statement gives the negative answer to Problem~\ref{problem2}.

\begin{prop}\label{counterex2}
There exist fields $K\!\subseteq \!F$ of characteristic~$2$ and $K$\nobreakdash-submodules $P$ and $Q$ in $F$ such that
$K\subseteq P^2\subseteq Q\subseteq P\subseteq F$, the module $Q$ {\rm(}respectively $P${\rm)} is a field, 
but the inclusion $PQ\subseteq P$ {\rm(}respectively $P^2Q\subseteq Q${\rm)} does not hold,
i.\,e. the pair $(P,Q)$ is not an admissible pair of type~$B_l$ {\rm(}respectively $C_l${\rm)}.
\end{prop}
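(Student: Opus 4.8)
The plan is to refute Problem~\ref{problem2} by an explicit construction, handling the two cases at once over the simplest possible field. I would take $F=\mathbb F_2(x)$ with $x$ transcendental and set $K=\mathbb F_2(x^4)$. Then $F$ is algebraic over the nonperfect field $K$ of characteristic~$2$ with $[F:K]=4$, and $\{1,x,x^2,x^3\}$ is a $K$-basis of $F$, so every element of $F$ is uniquely $a_0+a_1x+a_2x^2+a_3x^3$ with $a_i\in K$. The whole construction is governed by the codimension-one $K$-subspace
$$
V=K+Kx+Kx^2=\mathbb F_2(x^2)+Kx,
$$
consisting of the elements with vanishing $x^3$-coordinate. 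The single arithmetic fact that drives both counterexamples is that $V$ is not closed under multiplication by $x^2$, since $x^2\cdot x=x^3\notin V$.

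For type~$C_l$ I would put $P=F$ and $Q=V$. Here $P$ is a field, $P^2=\mathbb F_2(x^2)$, and one checks the chain $K=\mathbb F_2(x^4)\subseteq P^2=\mathbb F_2(x^2)\subseteq Q=V\subseteq P=F$. Taking $x^2\in P^2$ and $x\in Q$ gives $x^2\cdot x=x^3\notin Q$, so $P^2Q\not\subseteq Q$ and $(P,Q)$ fails to be an admissible pair of type~$C_l$. For type~$B_l$ I would instead put $Q=\mathbb F_2(x^2)$, which is a field, and $P=V$. Now $x\in P$, $x^2\in Q$, and again $x\cdot x^2=x^3\notin P$, so $PQ\not\subseteq P$. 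In both cases $P$ and $Q$ are manifestly $K$-submodules of $F$, and $Q$ (respectively $P$) is a field as required.

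The routine part is verifying the inclusion chain $K\subseteq P^2\subseteq Q\subseteq P\subseteq F$ in each case; this is transparent in the basis $\{1,x,x^2,x^3\}$, where $P^2$ is read off from the fact that squaring is additive in characteristic~$2$ (for instance for $B_l$ one gets $P^2=\mathbb F_2(x^8)+\mathbb F_2(x^8)x^2+\mathbb F_2(x^8)x^4\subseteq\mathbb F_2(x^2)=Q$, while $P^2$ still contains $K=\mathbb F_2(x^4)=\mathbb F_2(x^8)+\mathbb F_2(x^8)x^4$). The one genuinely delicate point — and the reason the answer to Problem~\ref{problem2} is negative — is the interplay between the condition $K\subseteq P^2$ and the desired failure of closure. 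If one took $K=F^2$, as in the fields used for Proposition~\ref{counterex1}, then $K\subseteq P^2$ would force $P=F$ and would make $Q$ automatically a $K=P^2$-module, so both carpet inclusions would hold for free. Choosing $K$ as deep as $F^4$ resolves this tension: it keeps $K$ strictly inside the multiplying field $\mathbb F_2(x^2)=F^2$, so that being a $K$-module is strictly weaker than being an $F^2$-module, while $P^2\supseteq F^4=K$ still secures $K\subseteq P^2$. The main obstacle is thus not computation but locating this window, after which the common witness $x^3=x\cdot x^2$ settles both cases simultaneously.
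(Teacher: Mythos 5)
Your construction is correct: in both cases the chain $K\subseteq P^2\subseteq Q\subseteq P\subseteq F$ holds (in characteristic~$2$ the set of squares of a $K$-module is an additive $K^2$-module, so in your $B_l$ case $P^2=\mathbb F_2(x^8)+\mathbb F_2(x^8)x^2+\mathbb F_2(x^8)x^4$ contains $K=\mathbb F_2(x^8)+\mathbb F_2(x^8)x^4$ and lies in $\mathbb F_2(x^2)=Q$, while in the $C_l$ case $P^2=F^2=\mathbb F_2(x^2)\supseteq K$), and the common witness $x^3=x\cdot x^2$, read in the $K$-basis $\{1,x,x^2,x^3\}$, defeats $PQ\subseteq P$, respectively $P^2Q\subseteq Q$. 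The paper follows the same general strategy --- explicit monomial counterexamples in rational function fields over $\mathbb F_2$ --- but with different data: it works in $F=\mathbb F_2(x_1,\dots,x_n)$, $n\ge2$, taking for the $B_l$ case $K=\mathbb F_2(x_1^2,\dots,x_n^2)$, $P=K+Kx_1+Kx_2$, $Q=K+Kx_1$ with witness $x_1x_2\notin P$, and for the $C_l$ case $K=\mathbb F_2(x_1^4,\dots,x_n^4)$, $P=F$, $Q=P^2+Kx_1+Kx_2$ with witness $x_1^3\notin Q$ (proved via linear independence of $1,x_1,x_2$ over $P^2$); your $C_l$ example is essentially the paper's in one variable. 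The genuine difference is in the $B_l$ case, and it is in your favor: the obstruction you isolate in your last paragraph is exactly right, since $\mathbb F_2(x_1^2,\dots,x_n^2)=F^2$, and the condition $K\subseteq P^2$ with $K=F^2$ forces $P=F$; consequently the paper's $B_l$ example, where $\dim_K P=3$, does \emph{not} satisfy $K\subseteq P^2$ (for instance $x_1^2x_2^2\in K$ has square root $x_1x_2\notin P$), so it only verifies the weaker chain $K\subseteq Q\subseteq P$ rather than the full hypothesis of Problem~\ref{problem2}. By lowering $K$ to $F^4=\mathbb F_2(x^4)$ in the $B_l$ case as well --- the ``window'' you describe --- your version satisfies all stated hypotheses in both cases, uses a single base field and a single witness throughout, and thereby repairs this defect of the paper's first construction (the paper's choice $K=F^2$ is harmless in Proposition~\ref{counterex1}, whose chain does not include $K\subseteq P^2$, but not here).
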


\begin{proof}
Define additive subgroups~$P$ and~$Q$ in the field~$F$ as $K$-modules with the bases $\{1,x_1,x_2,\}$ and
$\{1,x_1\}$ respectively. Note that $Q$ is a field. Subgroups~$P$ and~$Q$ satisfy conditions of the proposition,
however $x_1x_2\notin P$, and hence, $PQ\notin P$.

To prove the statement in the case where $P$ is a field we reduce the field~$K$ to
$K=\mathbb{F}_2(x_1^4,\dots ,x_n^4)$. Define additive subgroups~$P$ and~$Q$ of the field~$F$ as $K$-modules
in the following way. Put
$P=F$ and $Q=P^2+Kx_1+Kx_2$. Evidently, $x_1^3\in P^2Q$. However, $x_1^3\notin Q$. Indeed, if ${x_1^3\in Q}$, then
$x_1^3=r+s x_1+t x_2$, where $r\in P^2$, $s,t\in K$.
It follows that
$
0=r+(s-x_1^2)x_1+t x_2
$
and all the coefficients $r$, $s-x_1^2$, and $t$ belong to the field $P^2$. 
Elements $1,x_1,x_2$ are linearly independent over~$P^2$. Therefore, $s-x_1^2=0$,
but $x_1^2\notin K$. The contradiction shows that $P^2Q\notin Q$.
\end{proof}

In~\cite{NuzhYak} the authors describe intermediate subgroups between Chevalley groups $\GG(\Phi,R)$ and $\GG(\Phi,F)$ 
of an arbitrary type $\Phi$ over the fraction field $F$ of the principle ideal domain $R$. 
It turns out that in this case such a subgroup coincides with $\GG(\Phi,P)$ for an intermediate subring~$P$, 
$R\subseteq P\subseteq F$. Why in this case admissible pairs do not appear?
The following proposition answers this question.

\begin{prop}\label{prop2}
Let $F$ be the fraction field of a principle ideal domain $R$ and $Q\subseteq P$ a pair of additive subgroups
in~$F$, containing~$R$. If $P$ is an $R$-module and $P^mQ\subseteq Q$ for some positive integer~$m$, then $P=Q$.

For an admissible pair $(P,Q)$ of an arbitrary type such that $R\subseteq Q$,
we have $P=Q$ and $P$ is a subring of~$F$.
\end{prop}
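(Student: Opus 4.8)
The plan is to establish the first assertion by an explicit computation carried out inside the additive group $Q$, and then to reduce the second assertion to the first. For the first assertion, take $a\in P$ and, using that $R$ is a principal ideal domain (hence a B\'ezout domain), write $a=b/c$ with $b,c\in R$ and $\gcd(b,c)=1$. The first step is to show that $1/c\in P$: choosing $u,v\in R$ with $ub+vc=1$ gives $1/c=u(b/c)+v=ua+v$, and since $P$ is an $R$-module containing $R$, the right-hand side lies in $P$. The key observation is then the identity
\[
a=\frac{b}{c}=\left(\frac1c\right)^{\!m}\cdot\bigl(bc^{m-1}\bigr),
\]
in which $(1/c)^m\in P^m$ (because $1/c\in P$) and $bc^{m-1}\in R\subseteq Q$. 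Hence $a\in P^mQ\subseteq Q$. As $a\in P$ was arbitrary, $P\subseteq Q$, and together with $Q\subseteq P$ this yields $P=Q$. I expect this to be the whole content of the first statement; note that the argument makes no assumption on the characteristic of $F$.

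For the second assertion I would first verify that the hypotheses of the first apply to an admissible pair $(P,Q)$ with $R\subseteq Q$. The inclusion $Q\subseteq P$ and the relation $P^pQ\subseteq Q$ are exactly conditions AP1 and AP2, so the only thing to check is that $P=\Lambda_s$ is an $R$-module. For $\Phi\ne B_l$ this is immediate, since $P$ is then a subring containing $R$ by AP3. For $\Phi=B_l$ I would instead invoke the carpet condition~\eqref{carpet} for a long root $\gamma$ and a short root $\beta$ with $\gamma+\beta$ short, say $\gamma=e_1-e_2$ and $\beta=e_2$, whose structure constant is $\pm1$; it gives $\Lambda_l\Lambda_s\subseteq\Lambda_s$, i.e.\ $QP\subseteq P$, and since $R\subseteq Q$ this forces $RP\subseteq P$. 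The first assertion now applies and gives $P=Q$. I expect this verification of the $R$-module property for type $B_l$ (where $P$ need not itself be a subring) to be the main obstacle.

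Finally, to see that $P=Q$ is a subring of $F$, I would identify it with a localization of $R$. Set $S=\{\pi:\pi^{-1}\in P\}$, the set of primes inverted in $P$. The computation above shows that every prime dividing the denominator of an element of $P$ already lies in $S$, whence $P\subseteq R_S:=R[\pi^{-1}\mid\pi\in S]$. For the reverse inclusion, the relation $P^pP\subseteq P$ (which is AP2 once $P=Q$) together with the $R$-module structure gives $\pi^{-k}\in P$ for all $k\ge1$ and all $\pi\in S$; a B\'ezout/partial-fraction argument over pairwise coprime prime powers then produces $c^{-1}\in P$ for every $c$ built from primes of $S$, and multiplying by a numerator shows $R_S\subseteq P$. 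Thus $P=Q=R_S$ is a subring. For types other than $B_2=C_2$ one can shortcut this last step, since one of $\Lambda_s,\Lambda_l$ is a subring by AP3 or AP4; the localization description is what handles the remaining case uniformly.
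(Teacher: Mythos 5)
Your proof of the first assertion is exactly the paper's: the same reduction to a coprime fraction $b/c$, the same B\'ezout step giving $1/c=ua+v\in P$ from the $R$-module structure, and the same identity $b/c=(1/c)^m\cdot bc^{m-1}\in P^mQ\subseteq Q$. Where you diverge is the second assertion, which the paper dispatches in one line (``follows from the first one and the definition of admissible pair''), and your elaboration is both correct and genuinely useful. In particular, the obstacle you flag is real: conditions AP1--AP2 as literally stated (with $P^p$ meaning $p$-th powers) do not by themselves yield $RP\subseteq P$ in type $B_l$ --- one can build additive subgroups in $\mathbb F_2(x)$ satisfying AP1, AP2, AP4 with $\Lambda_l\Lambda_s\not\subseteq\Lambda_s$ --- so your appeal to the carpet condition~\eqref{carpet} (structure constant $\pm1$ for long $+$ short $=$ short) is the right reading of ``admissible pair''; it is also how the paper itself uses the notion, since in Proposition~\ref{counterex1} admissibility is verified precisely through $PQ\subseteq P$ and $P^2Q\subseteq Q$. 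Your final localization argument $P=Q=R_S$ is correct (the iteration $\pi^{-k}\mapsto\pi^{-(kp+1)}$ via $P^pP\subseteq P$ and the partial-fraction step both work), but it is more machinery than the statement needs: once $P=Q$, the same carpet inclusion $PQ\subseteq P$ reads $P\cdot P\subseteq P$, and since $1\in R\subseteq P$, the subring conclusion is immediate in all types, including $B_2=C_2$ where neither AP3 nor AP4 applies. What your longer route buys is the sharper structural fact that $P$ is a localization of $R$, which nicely explains, in the spirit of~\cite{NuzhYak}, why no proper admissible pairs exist over a fraction field of a PID; the paper's intended argument only gives the bare subring statement.
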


\begin{proof}
Let $r,s\in R$ and $\frac{r}{s}\in P$. Without loss of generality we may assume that $r$ and $s$ are mutually prime.
Then there exist $u,v\in R$ such that $ur+vs=1$. Since $P$ is an $R$-module and contains $R$, we have
$\frac{1}{s}=u\frac{r}{s}+v\in P$. The product $rs^{m-1}$ lies in $R\subseteq Q$ and in $P^mQ\subseteq Q$, 
therefore,
$\frac{r}{s}=(\frac{1}{s})^m(rs^{m-1})\in Q$. Thus, $P\subseteq Q$, and hence, $P=Q$.

The second assertion follows from the first one and the definition of admissible pair given in
the introduction.
\end{proof}

For Chevalley groups of skew types ${}^2\!A_{2n+1},{}^2\!D_n,{}^2\!E_6,{}^3\!D_4$ over the fraction field~$F$ 
of a PID~$R$ the intermediate subgroups were described in~\cite{MoisNuzhin} with certain restrictions
on the cardinality of the multiplicative subgroup of the ring~$R$.

\section{Bruhat decomposition}\label{sec:Bruhat}
The factorization $G(F)=U(F)N(F)U(F)$ of a Chevalley group $G(F)$ over a field $F$ is usually called the \textit{Bruhat decomposition}. 
Such presentation of an element from $G(F)$ is not unique. 
However, it can be transformed to the \textit{reduced} Bruhat decomposition (canonical presentation of elements of 
Chevalley groups, see~\cite[Corollary~8.4.2]{CarterBook}), which is unique. 
The following theorem establishes the reduced Bruhat decomposition in the carpet subgroup corresponding to an admissible pair.

The image of~$N(\Z)$ in $N(F)$ is denoted by~$N^\pm(F)$ or simply by~$N^\pm$. 
The expressions $T(\AAA)$ and $N(\AAA)$ are used to denote the intersection of the carpet subgroup $\E(\Phi,\AAA)$
with the subgroups $T(F)$ and $N(F)$ respectively. In the unipotent subgroups
$$
U(F)=\langle X_\alpha(F)\mid \alpha\in\Phi^+\rangle\text{ and}
$$
$$
V(F)=\langle X_\alpha(F)\mid \alpha\in-\Phi^+\rangle
$$
a carpet $\AAA$ naturally defines the unipotent carpet subgroups
$$
U(\AAA)=\langle x_\alpha (\AAA_\alpha )\mid \alpha \in\Phi^+ \rangle\text{ and}
$$
$$
V(\AAA)=\langle x_\alpha (\AAA_\alpha )\mid \alpha \in -\Phi^+ \rangle,
$$
These subgroups coincide with the intersections of $\E(\Phi,\AAA)$ with $U(F)$ and $V(F)$ respectively
if and only if the carpet~$\AAA$ is closed. For an element $w\in W(\Phi)$ and a carpet $\AAA$ of type~$\Phi$ 
put
$$
U_w(\AAA)=\langle x_\alpha (\AAA_\alpha )\mid \alpha \in\Phi_w^+ \rangle, \text{ where } 
\Phi_w^+=\Phi^+\cap w^{-1}(-\Phi^+).
$$

\begin{thm}\label{Bruhat}
\sly
Let $\E(\Phi,\AAA)$ be a carpet subgroup lying between Chevalley groups $\GG(\Phi,K)$ and
$\GG(\Phi,F)$ of type $\Phi=B_l,C_l,F_4,G_2$ ($l\ge 2$), where $F$ is an algebraic extension
of a nonperfect field~$K$ of characteristic $p=2$ for $\Phi =B_l,\ C_l,\ F_4$, and $p=3$ for $\Phi =G_2$.
For each element $w\in W$ chose its representative $n_w$ in $N^\pm$.
Then an element $g\in\E(\Phi,\AAA)$ has a unique presentation $g=uhn_wv$, where
$u\in U(\AAA)$, $h\in T(\AAA)$, $w\in W$, and $v\in U_w(\AAA)$. In particular,
the carpet $\AAA$ is closed, i.\,e. the carpet subgroup $\E(\Phi,\AAA)$ 
contains no new root elements.
\end{thm}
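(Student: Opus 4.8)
The plan is to establish the Bruhat decomposition inside the carpet subgroup $\E(\Phi,\AAA)$ by reducing it to the known Bruhat decomposition in the ambient Chevalley group $\GG(\Phi,F)$ and then controlling the entries of each factor using the carpet conditions. The ambient group $\GG(\Phi,F)$ satisfies $\GG(\Phi,F)=\bigsqcup_{w\in W}U(F)\,n_w\,T(F)\,U_w(F)$, and each element has a \emph{unique} expression $g=u\,h\,n_w\,v$ with $u\in U(F)$, $h\in T(F)$, $w\in W$, and $v\in U_w(F)$ (the reduced Bruhat decomposition, as in \cite[Corollary~8.4.2]{CarterBook}). So for $g\in\E(\Phi,\AAA)$ I already get a unique such factorization with factors in the ambient group; the whole content is to show that these factors \emph{land in the carpet pieces}: $u\in U(\AAA)$, $h\in T(\AAA)$, and $v\in U_w(\AAA)$. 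Once this is proved, uniqueness is inherited for free from uniqueness in $\GG(\Phi,F)$, and the final claim that $\AAA$ is closed follows by applying the result to $g\in\E(\Phi,\AAA)\cap x_\alpha(F)$.

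First I would observe that by Proposition~\ref{prop1} the carpet $\AAA$ is given by additive subgroups $\AAA_\alpha\in\{P,Q\}$ with $K\subseteq P^p\subseteq Q\subseteq P\subseteq F$, that $F$ is algebraic over the nonperfect field $K$, and that $\E(\Phi,\AAA)$ contains $\GG(\Phi,K)$. The key structural tool is Lemma~\ref{lem8}: for every nonzero $r\in P$ (resp.\ $r\in Q$) the ring $K[r]$ (resp.\ $K^p[r]$) lies in $P$ (resp.\ $Q$), and in particular $P=P^{-1}$, $Q=Q^{-1}$. This lets me form Weyl-group representatives and torus elements \emph{inside} the carpet subgroup: for $\alpha\in\Phi$ and a nonzero $r$ in the appropriate additive subgroup I can write $w_\alpha(r)=x_\alpha(r)x_{-\alpha}(-r^{-1})x_\alpha(r)$ and $h_\alpha(r)=w_\alpha(r)w_\alpha(-1)$, all of whose factors lie in $\E(\Phi,\AAA)$ because $r^{-1}$ is again in the same subgroup. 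Hence $N(\AAA)\supseteq N^\pm$ and $T(\AAA)$ is large enough to absorb the diagonal part. I would then verify, again via Lemma~\ref{lem8} and the carpet conditions, that the intersections $U(\AAA)=\E(\Phi,\AAA)\cap U(F)$ and $U_w(\AAA)$ behave as asserted, i.e.\ that the relevant root heights are closed under the commutator relations with coefficients in $P$ and $Q$.

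The substantive step is the induction that sweeps an arbitrary $g\in\E(\Phi,\AAA)$ into the form $u\,h\,n_w\,v$ with carpet-controlled factors. I would argue by induction on the length of a word in the generators $x_\alpha(\AAA_\alpha)$, processing one generator $x_\beta(t)$ at a time against a partial decomposition $u\,h\,n_w\,v$. Multiplying on the right by $x_\beta(t)$ either keeps $w$ fixed (when $w(\beta)>0$), in which case I commute $x_\beta(t)$ leftward through $v$ and $n_w$ using the Chevalley commutator formula~\eqref{ChCommForm}, absorbing the resulting root elements into $u$ and $v$ with coefficients that remain in $P$ or $Q$ by the carpet conditions~\eqref{carpet}; or it drops the length, in which case the standard $\SL_2$-identity relating $x_\beta(t)x_{-\beta}(-t^{-1})x_\beta(t)$ to a Weyl element is used, and here I crucially invoke $P=P^{-1}$, $Q=Q^{-1}$ from Lemma~\ref{lem8} to guarantee $t^{-1}$ stays inside the carpet. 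Throughout, the height parameters $C_{ij,\alpha\beta}$ and the characteristic-$p$ relations ensure the short/long bookkeeping between $P$ and $Q$ is consistent with $p\Lambda_s\subseteq\Lambda_l$.

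\textbf{The main obstacle} I anticipate is precisely the length-reducing move: controlling the coefficients produced when an element $x_\beta(t)$ must be pulled past $n_w$ and then re-expanded, because this is where inverses $t^{-1}$ and mixed products of $P$- and $Q$-coefficients appear simultaneously, and one must check that every newly created root element has its parameter in the correct additive subgroup dictated by the root length. This is exactly the point where the hypotheses that $F$ is algebraic over $K$ and that $\AAA$ satisfies the admissible-pair conditions are indispensable, via Lemma~\ref{lem8}; without closure under inversion the Weyl elements would escape the carpet and the induction would break down. Once this bookkeeping is verified the uniqueness and the closedness of $\AAA$ follow immediately from the corresponding facts in $\GG(\Phi,F)$, completing the proof.
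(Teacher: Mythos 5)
Your overall frame agrees with the paper's starting point --- use the reduced Bruhat decomposition of $\GG(\Phi,F)$, so that uniqueness and the closedness of $\AAA$ come for free once one shows the ambient factors of an element of $\E(\Phi,\AAA)$ lie in the carpet pieces --- but your mechanism is genuinely different. The paper does \emph{not} run a rewriting induction: it writes $u$ and $v$ as a carpet part times ``bad'' root factors $a_{\alpha_i}\in X_{\alpha_i}(F)\sm x_{\alpha_i}(\AAA_{\alpha_i})$, assumes there is at least one bad factor, produces via Lemmas~\ref{lem5}--\ref{lem7} a root $\alpha$ (long for $B_l$, short for $C_l$) whose root subgroup commutes with all of them but is not orthogonal to one, and derives a contradiction: when $\AAA_\alpha$ is a field (Proposition~\ref{prop1}(a)--(c)) via the Dickson-type Lemma~\ref{lem3} together with the factor-extraction Lemma~\ref{lem1} (which needs $K$ infinite, i.e.\ nonperfect), and in the case $B_2=C_2$, where neither $P$ nor $Q$ need be a field, via an explicit $\SL_2$ matrix computation using $Q=Q^{-1}$ from Lemma~\ref{lem8}. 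Your plan uses none of Lemmas~\ref{lem1}, \ref{lem3}, \ref{lem5}--\ref{lem7}; it is instead a Carter-style cell calculus with carpet coefficients, close in spirit to how the paper later verifies axiom (BN4) in Theorem~\ref{BN}. That route is not hopeless, but as written it has two genuine gaps.

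First, your inductive dichotomy --- ``multiplying by $x_\beta(t)$ either keeps $w$ fixed (when $w(\beta)>0$) or drops the length, in which case the $\SL_2$ identity applies'' --- is valid only for \emph{simple} $\beta$: for a non-simple root $\beta$ with $w(\beta)<0$ one has no inclusion of $Bn_wB\cdot X_\beta$ into $Bn_wB\cup Bn_{ws_\beta}B$, and $x_\beta(s)$ cannot in general be split off as the extremal factor of $v\in U_w(\AAA)$. Since $\E(\Phi,\AAA)$ is generated by \emph{all} root subgroups, you must first reduce to simple-root generators, e.g.\ writing $x_\beta(t)=n\,x_{\pm\alpha_i}(\pm t)\,n^{-1}$ with $\alpha_i$ simple of the same length and $n\in N^\pm\le\E(\Phi,\AAA)$ (legitimate exactly because $\AAA_\alpha$ depends only on root length, which $W$ preserves), and then prove closure of the union of carpet cells under right multiplication by $x_{\pm\alpha_i}(\AAA_{\alpha_i})$ \emph{and} by $n_{s_{\alpha_i}}$; this missing reduction is an idea, not a formality. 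Second, the coefficient bookkeeping you delegate to ``the carpet conditions'' is precisely the nontrivial heart: the torus elements $h_\beta(c)$, $c\in\AAA_\beta\sm\{0\}$, created in the length-reducing step must be pushed leftward, and $h_\beta(c)\,x_\gamma(a)\,h_\beta(c)^{-1}=x_\gamma\bigl(c^{\langle\gamma,\beta^\vee\rangle}a\bigr)$ has to stay in the carpet. Since $P$ and $Q$ need not be multiplicatively closed (by the paper's own Proposition~\ref{counterex1}, e.g.\ $P\cdot P\not\subseteq P$ for the explicit $B_2$ example), this does not follow from~\eqref{carpet} by inspection; it holds only after a root-by-root check that in $B_l$, $C_l$ the pairings between equal-length roots are even, so only $c^{\pm2}a$ occurs and is absorbed by $P^2Q\subseteq Q$, $PQ\subseteq P$, \eqref{inc1} and $P=P^{-1}$, $Q=Q^{-1}$ (Lemma~\ref{lem8}), while odd pairings occur only between roots of different lengths (and, for $G_2$, the pairing $\pm3$ is covered by $P^3\subseteq Q$). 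Note also that invoking $U(\AAA)=\E(\Phi,\AAA)\cap U(F)$ as an ingredient presupposes closedness, which is part of the conclusion; your induction must work with the generated subgroups only. Until these points are written out, your text is a plausible program rather than a proof of the theorem.
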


\begin{proof}
According to Proposition~\ref{prop1} the carpet~$\AAA$ is defined by a pair of additive subgroups $P$ and $Q$ of the field
$F$, containing~$K$.

Let $g\in\E(\Phi,\AAA)$. Then $g=uhvn$ for some $u\in U(F)$, $v\in V(F)$, $h\in T(F)$,
$n\in N^\pm$, and $n^{-1}vn\in U(F)$ by the reduced Bruhat decomposition in the group $\GG(\Phi,F)$.
Since $N^\pm\le\E(\Phi,\AAA)$, we have $uhv\in\E(\Phi,\AAA)$.

Each element in $u\in U(F)$ can be written in the form
$$
u'a_{\alpha _k}\dots a_{\alpha _1},\qquad \alpha _k>\cdots >\alpha _1,\qquad
a_{\alpha _i}\in X_{\alpha _i}(F)\sm x_{\alpha _i}(\AAA_{\alpha _i}),\
i=1,\dots ,k,
$$
where $u'\in U(\AAA)$, $k\ge 0$, if $k=0$, then $u=u'$, and the ordering of roots is compatible with their heights.
This can be easily deduced from the carpet conditions for the carpet subgroup $U(\AAA)$
see also Lemma~3 from~\cite{NuzhinLevi}.
Similarly, each element $v\in V(F)$ can be written in the form
$$
a_{\beta_1}\dots a_{\beta_m}v',\ \beta_1>\cdots >\beta_m,\ a_{\beta_i}\in X_{\beta_i}(F)\sm x_{\beta_i}(\AAA_{\beta_i}),\
i=1,2,\dots ,m,
$$
where $v'\in V(\AAA)$, $n^{-1}v'n\in U(\AAA)$, $m\ge 0$, and if $m=0$, then $v=v'$.
Since $u',v'\in \E(\Phi,\AAA)$, then $\E(\Phi,\AAA)$ contains the element
$$
a_{\alpha _k}\dots a_{\alpha _1}ha_{\beta_1}\dots a_{\beta_m},\ \alpha _k>\cdots >\alpha _1>0>\beta_1>\cdots >\beta_m.
$$
Suppose that $k+m>0$. By Lemmas~\ref{lem5}-\ref{lem7} there exists a root $\alpha \in-\Phi^+$ such that
the root subgroup $X_\alpha(F)$ commutes with root subgroups
$X_{\beta_i}(F)$ for all $i=1,\dots ,m$ and with $X_{-\alpha _i}$ for all  $i=1,\dots ,k$.
Moreover, $\alpha$ is not orthogonal to one of the roots
$\beta_i$ or $-\alpha _i$. Suppose that it is not orthogonal to $-\alpha _j$, $1\le j\le k$,
if it is orthogonal to all roots $\alpha_i$ but not orthogonal to one of $\beta_i$'s
is essentially the same.
Taking the smallest possible $j$ we may assume that $(\alpha ,\alpha _i)=0$ for all $i<j$.
This implies that $X_{\alpha}(F)$ commutes with $X_{\alpha_i}(F)$, $i=1,\dots ,j$, as in characteristic~2
any two root subgroups corresponding to orthogonal roots commute.

Let $r=\alpha(h)$, so that $hx_\alpha (t)h^{-1}=x_\alpha (rt)$. Put
$$
y=a_{\alpha_k}\dots a_{\alpha_1}\qquad\text{и}\qquad z=a_{\beta_1}\dots a_{\beta_m}.
$$
Then $yhz\in \E(\Phi,\AAA)$ and
$$
(yhz)x_\alpha(\AAA_\alpha)(yhz)^{-1}=yx_\alpha(r\AAA_\alpha)y^{-1}\in \E(\Phi,\AAA).
$$
On the other hand the choice of~$\alpha$ implies that
$$
x_{-\alpha}(\AAA_{-\alpha})=x_{-\alpha}(\AAA_\alpha)=yx_{-\alpha}(\AAA_\alpha)y^{-1}.
$$
Thus,
\begin{equation}\label{inE}
y\cdot\langle x_{\alpha}(r\AAA_\alpha),x_{-\alpha}(\AAA_\alpha)\rangle\cdot y^{-1}\subseteq\E(\Phi,\AAA).
\end{equation}

Now the proof splits in two cases: (1) $\Phi\ne B_2=C_2$; 
(2) $\Phi=B_2=C_2$.

(1) Let $\Phi\ne B_2=C_2$. Here it becomes important that by Lemmas~\ref{lem5}--\ref{lem7} 
we can take long root~$\alpha$ for $\Phi = B_l$ and short root $\alpha$ for $\Phi = C_l$. 
Therefore, according to items (a)--(c) of Proposition~\ref{prop1} the additive subgroup $\AAA_\alpha$ is a field.
If $r\notin\AAA_\alpha$, then by Lemma~\ref{lem3} there exists $s\notin\AAA_\alpha=\AAA_{-\alpha}$ such that
$x_{-\alpha}(s)=yx_{-\alpha}(s)y^{-1}\in\E(\Phi,\AAA)$, which contradicts to the fact that the carpet
$\AAA$ is closed. Therefore,
$r\in\AAA_{-\alpha}=\AAA_\alpha$. Put $y'=a_{\alpha_k}\dots a_{\alpha_{j+1}}$ and
$a_{\alpha_j}=x_{\alpha_j}(q)$.
Note that $[a_{\alpha_j},h_\alpha(t)]=x_{\alpha_j}\bigl(q(t^m-1)\bigr)$ for all $t\in K\sm\{0\}$,
where $m=-2(\alpha_j,\alpha)/(\alpha,\alpha)\ne 0$. Since the field $K$ is not perfect, it is infinite.
Hence, one  can choose $t\in K\sm\{0\}$ such that $t^m-1\ne0$.
Inclusion~\eqref{inE} implies that
$$
[y,h_\alpha(t)]=[y',h_\alpha(t)]^{a_{\alpha_j}}\cdot[a_{\alpha_j},h_\alpha(t)]=
\prod_{\gamma>\alpha_j}x_\gamma(s_\gamma)x_{\alpha_j}\bigl(q(t^m-1)\bigr)\in\E(\Phi,\AAA)
$$
for some $s_\gamma\in F$.
Applying Lemma~\ref{lem1} to this element and the subgroup $\E(\Phi,\AAA)$ we obtain the inclusion 
$q(t^m-1)\in\AAA_{\alpha_j}$, hence,
$q\in\AAA_{\alpha_j}$. This means that the root element $a_{\alpha_j}$ lies in $\E(\Phi,\AAA)$, which is
a contradiction.

(2) Let $\Phi=B_2=C_2$. In this case in view of an example from section~\ref{examples}
both subgroups~$P$ and~$Q$ are not necessarily fields.
According to Lemmas~\ref{lem6} and~\ref{lem7} we can take long or short root $\alpha$.
Let $\alpha=-2\gamma-\delta$ in the notation of item~(1), where $\{\gamma,\delta\}$ is the set of fundamental 
positive roots in~$B_2$. Then
\begin{equation}\label{b21}
(yhz)x_{-2\gamma-\delta}(Q)(yhz)^{-1}=yx_{-2\gamma-\delta}(r Q)y^{-1},
\end{equation}
where $hx_{-2\gamma-\delta}(1)h^{-1}=x_{-2\gamma-\delta}(r)$. On the other hand,
since the root subgroups $X_{2\gamma+\delta}(F)$ and $X_{\gamma+\delta}(F)$ are central in $U(F)$, we have
\begin{equation}\label{b22}
x_{2\gamma+\delta}(Q)=yx_{2\gamma+\delta}(Q)y^{-1},
\end{equation}
\begin{equation}\label{b23}
x_{\gamma+\delta}(Q)=yx_{\gamma+\delta}(Q)y^{-1}.
\end{equation}
Commuting the elements of the shapes~\eqref{b21} and~\eqref{b23}, we obtain
\begin{equation}\label{b24}
yx_{-\gamma}(r)x_\delta(r)y^{-1}.
\end{equation}
Now commuting the elements of the shapes~\eqref{b22} and~\eqref{b24}, we get
\begin{equation}\label{b25}
yx_{\gamma+\delta}(r)x_\delta(r^2)y^{-1}.
\end{equation}
By Lemma~\ref{lem1} equation~\eqref{b25} implies the inclusion $x_\delta(r^2)\in\E(\Phi,\AAA)$. Therefore,
if $r^2\notin Q$, then we arrive at the contradiction with the equation $\E(\Phi,\AAA)\cap X_\delta=x_\delta(Q)$.

Let $r^2\in Q$ but $r\notin Q$. By Lemma~\ref{lem8} $Q^{-1}=Q$. Therefore, the last inclusion is equivalent
to the equation $qr^2=1$ for some nonzero $q\in Q$. Moreover, $q\neq 1$, as $r=1$ when $q=1$ and in this
case we again obtain the inclusion $a_{\alpha_j}\in \E(\Phi,\AAA)$, which leads to a contradiction.
Further, for all $q_i\in Q$ the subgroup $\langle x_{2\gamma+\delta}(Q),\ x_{-2\gamma-\delta}(r Q)\rangle$ 
contains the image of the matrix
\begin{align*}
a&=
\begin{pmatrix}
1&q_4r \\
0&1 \end{pmatrix}
\begin{pmatrix}
1&0 \\
q_3&1 \end{pmatrix}
\begin{pmatrix}
1&q_2r \\
0&1 \end{pmatrix}
\begin{pmatrix}
1&0 \\
q_1&1 \end{pmatrix}
\\
&=
\begin{pmatrix}
1+(q_1q_2+(q_1+q_3)q_4)r+q_1q_2q_3q_4r^2&* \\
(q_1+q_3)+q_1q_2q_3r&*
\end{pmatrix}
\end{align*}
under the homomorphism~$\ph$ from $SL_2(F)$ onto $\langle X_{2\gamma+\delta},X_{-2\gamma-\delta}\rangle$,
that extends the map $t_{12}(u)\to x_{-2\gamma-\delta}(u)$, $t_{21}(u)\to x_{2\gamma+\delta}(u)$. Let
$q_1+q_3=1$, $q_2=q_1^{-1}$, $q_4=1$. Then
$$
a=
\begin{pmatrix}
1+(q_1+1)r^2&* \\
1+(q_1+1)r&* \end{pmatrix}.
$$
Obviously, $(q_1+1)\neq 0,1$ when $q_1\neq 0,1$. Since $qr^2=1$, for $(q_1+1)=q$ we have
$$
a=
\begin{pmatrix}
0&(1+qr)^{-1} \\
1+qr&* \end{pmatrix}
=
\begin{pmatrix}
0&(1+qr)^{-1} \\
1+qr&0 \end{pmatrix}
\begin{pmatrix}
1&* \\
0&1 \end{pmatrix}.
$$
Let $\varphi(a)=z$. Then
\begin{equation}\label{b26}
\begin{split}
yzx_{-2\gamma-\delta}(r Q)z^{-1}y^{-1}&=yx_{2\gamma+\delta}((1+qr)^2r Q)y^{-1}
\\
&=x_{2\gamma+\delta}((1+qr)^2r Q)\subseteq \E(\Phi,\AAA).
\end{split}
\end{equation}
Recall that $qr^2=1$, hence
$$
(1+qr)^2r=(1+q^2r^2)r=(1+q)r=(1+r^{-2})r=r+r^{-1}=
\frac{r^2+1}{r}=\frac{q^{-1}+1}{r}.
$$
Since $0,1\neq q\in Q$, then $0\neq q^{-1}+1\in Q$ according to the equality $Q=Q^{-1}$. 
Therefore formula~(\ref{b26}) implies that $r^{-1}\in Q$, and hence $r\in Q$, which is a contradiction.
\end{proof}

\begin{rem}
In the proof of~\cite[Theorem~3.1]{Nuzhin13} for type~$B_l$
the inclusions of the factors of the product $uhvn\in M$ into~$M$
are claimed to have the same proof as in~\cite{Nuzhin}. 
However, in~\cite{Nuzhin} all additive subgroups $\AAA_\alpha$, which are defined by 
$M\cap x_\alpha(F)=x_\alpha(\AAA_\alpha),\ \alpha\in \Phi$, coincide with a subfield of the ground field~$F$.
The proof of Theorem~\ref{Bruhat} given above at the same time fills this gap 
(when at least one of additive subgroups $\AAA_\alpha$ is not a field).
\end{rem}

\begin{rem}
The Bruhat decomposition of a Chevalley group $G(F)$ over a field~$F$ implies the \textit{Gauss decomposition} 
$G(F)=U(F)T(F)V(F)U(F)$, which holds even for semilocal commutative rings.
In 1976 Z.~I.~Borewich~\cite{BorevichParabolic} established the Gauss decomposition for matrix subgroups of $GL_n$ and $SL_n$, 
that are defined by net of ideals of a semilocal ring. The same result was obtained by N.~A.~Vavilov and 
E.~B.~Plotkin~\cite{VavPlotNet1,VavPlotNet2} for net subgroups of all Chevalley groups over commutative semilocal rings.
\end{rem}

\section{Intermediate subgroups as groups with a $(B,N)$-pair}\label{sec:BN}
Subgroups $B$ and $N$ of an arbitrary group $G$ are called a \emph{$(B,N)$-pair} they satisfy the following axioms.

\begin{itemize}
\item[BN1.]
Subgroups $B$ and $N$ generate $G$.
\item[BN2.]
$B\cap N \unlhd N$.
\item[BN3.]
The quotient group $W=N/B\cap N$ is generated by involutions $w_i$, $i \in I$.
\item[BN4.]
For any preimage $n_i \in N$ of $w_i$ under the natural homomorphism $N$ onto $W$ we have
$$
Bn_iB \cdot BnB \subseteq Bn_inB \cup BnB, \quad n \in N.
$$
\item[BN5.]
If $n_i$ is the element from axiom~$(BN4)$, then $n_iBn_i\neq B$.
\end{itemize}

In different termilogy with $S=\{w_i\mid i\in I\}$ the quadruple $(G,B,N,S)$ is called a \emph{Tits system}
\cite[p. 26]{Bourbaki4-6}).

A $(B,N)$-pair is called \emph{split} if $B=U(B\cap N)$, where
$U$ is a normal nilpotent sbugroup of~$B$, see~\cite[p.~149]{GorBook}.
A $(B,N)$\nobreakdash-pair is called \emph{saturated}, if
$\bigcap_{n\in N}B^n=B\cap N$, see~\cite[p. 58]{Bourbaki4-6}).

It is well-known that a Chevalley group~$\GG(\Phi,F)$ over a field~$F$ admits a split saturated $(B,N)$-pair.
The group $B(F)=U(F)T(F)$ can be taken as $B$, and $N(F)$ as $N$. Note that $\bigl(B(F),N(K)\bigr)$
also is a $(B,N)$-pair of the group $\GG(\Phi,F)$ for an arbitrary subfield $K$ of $F$, 
but it is saturated only if $K=F$.

\begin{thm}\label{BN}
\sly
Let $\E(\Phi,\AAA)$ be a carpet subgroup that lies between Chevalley groups $\GG(\Phi,K)$ and $\GG(\Phi,F)$ 
of type $\Phi=B_l$, $C_l$ $(l\ge 2)$, $F_4$, or $G_2$, where $F$ is an algebraic extension of a nonperfect field~$K$ 
of characteristic $p=2$ for $\Phi=B_l$, $C_l$, $F_4$, and $p=3$ for $\Phi =G_2$.
Then the group $\E(\Phi,\AAA)$ is simple and admits a split saturated $(B,N)$-pair.
\end{thm}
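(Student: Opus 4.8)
The plan is to take $B=B(\AAA):=U(\AAA)T(\AAA)$ and $N=N(\AAA)$ and to read the five axioms directly off the reduced Bruhat decomposition of Theorem~\ref{Bruhat}. First I would use that theorem to note that $\AAA$ is closed, so $U(\AAA)=\E(\Phi,\AAA)\cap U(F)$ and $T(\AAA)=\E(\Phi,\AAA)\cap T(F)$; as an upper unitriangular monomial matrix is trivial, this yields $B\cap N=T(\AAA)$. Axiom BN1 is immediate since every $g=uhn_wv$ lies in $BNB$. Axiom BN2 holds because $T(F)\unlhd N(F)$ restricts to $T(\AAA)\unlhd N(\AAA)$ (if $n\in N(\AAA)$ and $h\in T(\AAA)$ then $nhn^{-1}\in T(F)\cap\E(\Phi,\AAA)=T(\AAA)$). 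For BN3 the homomorphism $N(\AAA)/T(\AAA)\to N(F)/T(F)=W$ is injective with image containing the classes of all $n_w\in N^\pm\le\E(\Phi,\AAA)$, hence is the whole Weyl group $W(\Phi)$, generated by the simple reflections $w_i$, which are involutions.

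Axioms BN4 and BN5 are the root-theoretic ones, but with the Bruhat decomposition in hand they reduce to the familiar rank-one computation. For a simple root $\alpha_i$ the reflection $w_i$ permutes $\Phi^+\sm\{\alpha_i\}$ and interchanges $\pm\alpha_i$, and the inclusion $Bn_iB\cdot BnB\subseteq Bn_inB\cup BnB$ follows from the Chevalley commutator formula together with the identity writing $x_{\alpha_i}(t)n_i$ inside $\langle X_{\alpha_i},X_{-\alpha_i}\rangle$; every root element involved stays in $\E(\Phi,\AAA)$ because $\AAA_{\alpha_i}=\AAA_{-\alpha_i}$ is a symmetric additive subgroup containing $1\in K$. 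The same symmetry shows that conjugation by $n_i$ sends $X_{\alpha_i}(\AAA_{\alpha_i})$ to $X_{-\alpha_i}(\AAA_{-\alpha_i})\not\le B$, which gives BN5. The system is split: $U(\AAA)\le U(F)$ is nilpotent and normalized by $T(\AAA)$ (conjugation by $T(\AAA)$ preserves each root subgroup and the group $\E(\Phi,\AAA)$), so $B(\AAA)=U(\AAA)\rtimes T(\AAA)$. Saturatedness amounts to $\bigcap_{w}n_wB(\AAA)n_w^{-1}=T(\AAA)$; since $n_wU(\AAA)n_w^{-1}=\langle x_\beta(\AAA_\beta)\mid\beta\in w\Phi^+\rangle$ and no root is positive for every $w$, the uniqueness in Theorem~\ref{Bruhat} forces any element of the intersection into $T(\AAA)$.

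For simplicity I would apply the Tits simplicity theorem \cite[Ch.~IV, \S2]{Bourbaki4-6} (see also \cite[p.~149]{GorBook}) to the split Tits system above, with the solvable (indeed nilpotent) normal subgroup $U=U(\AAA)$. The group $\E(\Phi,\AAA)$ is generated by conjugates of $U(\AAA)$: the longest element gives $V(\AAA)=n_{w_0}U(\AAA)n_{w_0}^{-1}$ (using that $w_0$ preserves root lengths and each $\AAA_\alpha$ is symmetric), and $\E(\Phi,\AAA)=\langle U(\AAA),V(\AAA)\rangle$. The Coxeter system $(W,S)$ is irreducible because $\Phi$ is. The theorem then gives that $\E(\Phi,\AAA)/Z$ is simple for $Z=\bigcap_g gB(\AAA)g^{-1}$, so it remains to prove $Z=1$. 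By saturatedness $Z\le T(\AAA)$; as $Z\unlhd\E(\Phi,\AAA)$, each $h\in Z$ satisfies $x_\alpha(r)hx_\alpha(r)^{-1}=h\,x_\alpha\bigl((\alpha(h)^{-1}-1)r\bigr)\in T(\AAA)$, and taking $r=1\in\AAA_\alpha$ forces $\alpha(h)=1$ for all $\alpha\in\Phi$. Thus $h$ is central in $\GG(\Phi,F)$; but this center is trivial in the relevant characteristic, since $F_4$ and $G_2$ are adjoint while for $B_l,C_l$ the quotient $\opn{P}(\Phi)/\Z\Phi$ is a $2$-group and $\mu_2(F)=\{1\}$ over a field of characteristic $2$. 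Hence $Z=1$ and $\E(\Phi,\AAA)$ is simple.

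The main obstacle, given Theorem~\ref{Bruhat}, is not any individual axiom but the two \emph{global} verifications: saturatedness, i.e. that the Weyl conjugates of $B(\AAA)$ intersect exactly in $T(\AAA)$, and above all the vanishing $Z=1$, where one must first trap a normal subgroup inside the torus part and identify it with a central subgroup, and then invoke the triviality of the center in characteristic $p$. Care is also needed in BN4 to keep every factor of the rank-one identity inside the carpet subgroup rather than the ambient Chevalley group.
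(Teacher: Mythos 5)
Your construction of the pair is the same as the paper's: the paper sets $Y(P,Q)=Y(F)\cap\E(\Phi,\AAA)$ for $Y=U,V,B,N,H$, which by closedness of the carpet (Theorem~\ref{Bruhat}) coincides with your $B=U(\AAA)T(\AAA)$, $N=N(\AAA)$, and the verifications of BN1--BN5, splitness and saturation run along the same lines (the paper, like you, settles BN4 by saying Carter's rank-one argument goes through with obvious changes). In fact your treatment of saturation and of the core $Z=\bigcap_g gB g^{-1}$ --- trapping $Z$ in $T(\AAA)$ and then killing it via $\alpha(h)=1$ for all $\alpha$ together with triviality of $\mu_2(F)$ in characteristic~$2$ (resp.\ adjointness for $F_4$, $G_2$) --- is more self-contained than the paper's, which only refers to the corresponding pages of Carter's book for $\GG(\Phi,F)$.

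There is, however, one genuine gap in the simplicity step: the Tits simplicity theorem you invoke has the hypothesis $G=[G,G]$, and you never verify it. Bourbaki's theorem, under generation of $G$ by conjugates of the solvable normal subgroup $U\le B$ and irreducibility of $(W,S)$, concludes only that every subgroup normalized by $G$ is contained in $Z$ or contains the derived subgroup; simplicity of $G/Z$ requires perfectness in addition, and perfectness does not follow from anything you proved. Indeed, $\Sp_4(\mathbb F_2)\cong S_6$ (or $\SL_2(\mathbb F_3)$) is generated by conjugates of its nilpotent $U$, has solvable $B$, irreducible Weyl group and, in the first case, trivial $Z$, yet is not simple --- precisely because it is not perfect. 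The paper closes this point explicitly as its condition~(a): since $K$ is nonperfect it is infinite, so for a root $\alpha$ one can pick $t\in K\sm\{0\}$ with $t^m\ne1$, where $m=-2(\alpha,\beta)/(\beta,\beta)\ne0$, and then $[x_\alpha(u),h_\beta(t)]=x_\alpha\bigl(u(t^m-1)\bigr)$; since $\AAA_\alpha$ is a $K$-module (by the carpet conditions and Lemma~\ref{lem8}) and $t^m-1\in K\sm\{0\}$, this gives $[x_\alpha(\AAA_\alpha),T(P,Q)]=x_\alpha(\AAA_\alpha)$, whence $\E(\Phi,\AAA)$ is perfect. Note that your infinite-field hypothesis is exactly what makes this work: over the excluded finite fields the statement can fail. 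With this one computation inserted before the appeal to Tits' theorem, your argument is complete and equivalent to the paper's.
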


\begin{proof}
By Proposition~\ref{prop1} the subgroup $\E(\Phi,\AAA)$ is parametrized by two additive subgroups~$P$ and~$Q$,
satisfying the conditions $K\!\subseteq \!P^p\!\subseteq\! Q\!\subseteq\! P\!\subseteq \!F$.
Let
$Y(P,Q)\!=\!Y(F)\cap\E(\Phi,\AAA)$, if $Y$ is $U$, $V$, $B$, $N$ or~$H$.

We show that $\bigl(B(P,Q),N(P,Q)\bigr)$ is a required $(B,N)$-pair. The monomial subgroup $N(K)$ 
by definition lies in $\E(\Phi,\AAA)$ and acts by conjugation transitively of its root subgroups $x_\alpha(\AAA_\alpha)$,
$\alpha\in\Phi$. Therefore axioms~$(BN1)$ and~$BN5)$ are satisfied. 
Axioms~$(BN2)$, and $(BN3)$ as well as the facts that the pair $(B(P,Q),N(P,Q))$ is split and saturated
follows easily from the definition of the groups $B(P,Q)$, $N(P,Q)$ and $T(P,Q)$.
The proof of axiom~$(BN4)$ for the whole Chevalley group $\GG(\Phi,F)$ from~\cite[стр. 106]{CarterBook} 
is valid in our situation with obvious changes and, therefore, is omitted. 

Next we establish the simplicity of the group $\E(\Phi,\AAA)$. It is well-known (see e.\,g.~\cite[p. 170]{CarterBook}),
that a group~$G$ with a $(B,N)$-pair is simple if the following conditions hold.

\begin{itemize}
\item[(a)]
$G=G^\prime$,
\item[(b)]
$B$ is solvable,
\item[(c)]
$\bigcap_{g\in G}gBg^{-1}=1$,
\item[(d)]
the set $I$ is not a disjoint union of two nonempty elementwise commuting subsets $J$ and $J'$.
\end{itemize}

The group $\E(\Phi,\AAA)$ is generated by its root subgroups $x_\alpha(\AAA_\alpha)$, $\alpha\in\Phi$ and
$$
[x_\alpha(\AAA_\alpha),T(P,Q)]=x_\alpha(\AAA_\alpha).
$$
Therefore the group $\E(\Phi,\AAA)$ is perfect. Clearly, the group $B(P,Q)$ is solvable. The equality
$$
\bigcap\limits_{g\in \E(\Phi,\AAA)}gB(P,Q)g^{-1}=1
$$
can be established by the same arguments as for the whole Chevalley group $\GG(\Phi,F)$ \cite[p. 172]{CarterBook}. 
Finally, the quotient group $N(P,Q)/T(P,Q)$ is isomorphic to the Weyl group of the system $\Phi$. 
Thus. the $(B,N)$-pair $\bigl(B(P,Q),N(P,Q)\bigr)$ satisfy conditions~(a)-(d), and hence,
the group $\E(\Phi,\AAA)$ is simple.
\end{proof}

The groups $\E(\Phi,\AAA)$ from Theorem~\ref{BN} are interesting also with respect to the following problem
posed by A.~V.~Borovik, see~\cite{KosUnSys}.

\begin{problem}\label{problem3}
Describe infinite groups with a split saturated $(B,N)$-pair.
\end{problem}

\bibliographystyle{amsplain}
\bibliography{english}
\end{document}